\newcommand\myeq{\mathrel{\stackrel{\makebox[0pt]{\mbox{\normalfont\tiny a.s.}}}{=}}}
\newtheorem{assumption}{\hspace{0pt}\bf Assumption}
\begin{document}

\jmlrheading{1}{2000}{1-48}{4/00}{10/00}{}


\ShortHeadings{Stochastic Conditional Gradient Methods}{Mokhtari, Hassani, and Karbasi}
\firstpageno{1}

\title{Stochastic Conditional Gradient Methods: \\ From Convex Minimization to Submodular Maximization}

\author{\name Aryan Mokhtari  \email aryanm@mit.edu \\
       \addr Laboratory for Information and Decision Systems\\
       Massachusetts Institute of Technology  \\
       Cambridge, MA 02139
       \AND
       \name Hamed Hassani \email hassani@seas.upenn.edu  \\
       \addr Department of Electrical and Systems Engineering\\
       University of Pennsylvania\\
       Philadelphia, PA 19104
       \AND
       \name Amin Karbasi \email amin.karbasi@yale.edu \\
       \addr Department of Electrical Engineering and Computer Science\\
       Yale University\\
       New Haven, CT 06520
       }
\editor{}

\maketitle

\thispagestyle{empty}

\begin{abstract}
This paper considers stochastic optimization problems for a large class of objective functions, including convex and continuous submodular. Stochastic proximal gradient methods have been widely used to solve such problems; however, their applicability remains limited when the problem dimension is large and the projection onto a convex set is computationally costly. Instead, stochastic conditional gradient algorithms are proposed as an alternative solution which rely on (i) Approximating gradients via a simple averaging technique requiring a single stochastic gradient evaluation per iteration; (ii) Solving a linear program to compute the descent/ascent direction. The gradient averaging technique reduces the noise of gradient approximations as time progresses, and replacing projection step in proximal methods by a linear program lowers the computational complexity of each iteration. We show that under convexity and smoothness assumptions, our proposed stochastic conditional gradient method converges to the optimal objective function value at a sublinear rate of $\mathcal{O}(1/t^{1/3})$. Further, for a monotone and continuous DR-submodular function and subject to a \textit{general} convex body constraint, we prove that our proposed method achieves a $((1-1/e)\text{OPT} -\eps)$ guarantee (in expectation) with   $\mathcal{O}{(1/\eps^3)}$ stochastic gradient computations. This guarantee matches the known hardness results and closes the gap between deterministic and stochastic continuous submodular maximization. Additionally, we achieve $((1/e)\text{OPT} -\eps)$ guarantee after operating on $\mathcal{O}{(1/\eps^3)}$ stochastic gradients for the case that the objective function is continuous DR-submodular but non-monotone and the constraint set is a down-closed convex body. By using stochastic continuous optimization as an interface, we also provide the first $(1-1/e)$ tight approximation guarantee for maximizing  a \textit{monotone but stochastic} submodular \textit{set} function subject to a general matroid constraint and $(1/e)$ approximation guarantee for the non-monotone case. Numerical experiments for both convex and submodular settings are provided, and they illustrate fast convergence time for our proposed stochastic conditional gradient method relative to alternatives.
\end{abstract}

\begin{keywords}
{Stochastic optimization, conditional gradient methods, convex minimization, submodular maximization, gradient averaging, Frank-Wolfe algorithm, greedy algorithm }
\end{keywords}


\section{Introduction}

Stochastic optimization arises in many procedures including wireless communications \citep{ribeiro2010ergodic}, learning theory \citep{vapnik2013nature}, machine learning \citep{bottou2010large}, adaptive filters \citep{haykin2008adaptive}, portfolio selection \citep{shapiro2009lectures} to name a few. In this class of problems the goal is to optimize an objective function defined as an expectation over a set of random functions subject to a general convex constraint.  In particular, consider an optimization variable $\bbx\in \ccalX \subset\reals^n$ and a random variable $\bbZ\in \ccalZ$ that together determine the choice of a stochastic function $\tilde{F}:\ccalX\times \ccalZ\to \reals$. The goal is to solve the program 
\begin{equation}\label{eq:problem}
\max_{\bbx\in\ccalC} F(\bbx) := \max_{\bbx\in\ccalC}  \mathbb{E}_{\bbz{\sim} P}\left[{\tilde{F}(\bbx,\bbz)}\right],
\end{equation}
where $\ccalC$ is a convex compact set, $\bbz$ is the realization of the random variable $\bbZ$ drawn from a distribution $P$, and $F(\bbx)$ is the expected value of the random functions $\tilde{F}(\bbx,\bbz)$ with respect to the random variable $\bbZ$. In this paper, our focus is on the cases where the function $F$ is either concave or continuous submodular. The first case which considers the problem of maximizing a concave function is equivalent to \textit{stochastic convex minimization}, and the second case in which the goal is to maximize a continuous submodular function is called  \textit{stochastic continuous submodular maximization.} Note that the main challenge here is to solve Problem \eqref{eq:problem} without computing the objective function $F(\bbx) $ or its gradient $\nabla F(\bbx) $ explicitly, since we assume that  either the probability distribution $P$ is unknown or the cost of computing the expectation is prohibitive.

In this regime, stochastic algorithms are the method of choice since they operate on computationally cheap stochastic estimates of the objective function gradients. Stochastic variants of the proximal gradient method are perhaps the most popular algorithms to tackle this category of problems both in convex minimization and submodular maximization. However, implementation of proximal methods requires projection onto a convex set at each iteration to enforce feasibility, which could be computationally expensive or intractable in many settings. To avoid the cost of projection, recourse to conditional gradient methods arises as a natural alternative. Unlike proximal algorithms, conditional gradient methods do not suffer from computationally costly projection steps and only require solving linear programs which can be implemented at a significantly lower complexity in many practical applications. 

In deterministic convex minimization, in which the exact gradient information is given, conditional gradient method, a.k.a., Frank-Wolfe algorithm \citep{frank1956algorithm,DBLP:conf/icml/Jaggi13}, succeeds in lowering the computational complexity of proximal algorithms due to its projection free property. However, in the stochastic regime, where only an estimate of the gradient is available, stochastic conditional gradient methods either diverge or underperform the proximal gradient methods. In particular, in stochastic convex minimization it is known and proven that stochastic conditional gradient methods may not converge to the optimal solution without an increasing batch size, whereas stochastic proximal gradient methods converge to the optimal solution at the sublinear rate of $\mathcal{O}(1/\sqrt{t})$. Hence, the possibility of designing a convergent stochastic conditional gradient method with a small batch size remains unanswered. 

In deterministic continuous submodular maximization where objective function is submodular, (neither convex nor concave), a variant of the condition gradient method called, continuous \citep{calinescu2011maximizing,bian16guaranteed} obtains a $(1-1/e)$ approximation guarantee for monotone functions, in contrast the best-known result for proximal gradient methods is a $(1/2)$ approximation guarantee \citep{hassani2017gradient}. However, in stochastic submodular maximization setting, stochastic variants of the continuous greedy algorithm with a small batch size fail to achieve a constant factor approximation \citep{hassani2017gradient}, whereas stochastic proximal gradient method recovers the $(1/2)$ approximation obtained by proximal gradient method in deterministic settings \citep{hassani2017gradient}. It is unknown if one can design a stochastic conditional gradient method that obtains a constant approximation guarantee, ideally $(1-1/e)$, for Problem~\eqref{eq:problem}.

In this paper, we introduce a stochastic conditional gradient method for solving the  generic stochastic optimization Problem \eqref{eq:problem}. The proposed method lowers the noise of gradient approximations through a simple gradient averaging technique which only requires a single stochastic gradient computation per iteration, i.e., the batch size can be as small as $1$. The proposed stochastic conditional gradient method improves the best-known convergence guarantees for stochastic conditional gradient methods  in both convex minimization and submodular maximization settings. A summary of our theoretical contributions follows.


\begin{enumerate}[label=(\roman*)]

\item In \textit{stochastic convex minimization}, we propose a stochastic variant of the Frank-Wolfe algorithm which converges to the optimal objective function value at the sublinear rate of $\mathcal{O}(1/t^{1/3})$. In other words, the proposed SFW algorithm achieves an $\eps$-suboptimal objective function value after $\mathcal{O}(1/\eps^{3})$ stochastic gradient evaluations.

\item In \textit{stochastic continuous submodular maximization}, we propose a stochastic conditional gradient method, which can be interpreted as a stochastic variant of the continuous greedy algorithm, that obtains the first tight $(1-1/e)$ approximation guarantee, when the function is monotone. For the non-monotone case, the proposed method obtains a $(1/e)$ approximation guarantee. Moreover, for the more general case of \textit{$\gamma$-weakly DR-submodular monotone maximization} the proposed stochastic conditional gradient method achieves a $(1-e^{-\gamma})$-approximation guarantee.

\item In \textit{stochastic discrete submodular maximization}, the proposed stochastic conditional gradient method achieves $(1-1/e)$ and $(1/e)$ approximation guarantees for monotone and non-monotone settings, respectively, by maximizing the multilinear extension of the stochastic discrete objective function. Further, if the objective function is monotone and has \textit{curvature $c$} the proposed stochastic conditional gradient method achieves a $(1-e^{-c})/c$ approximation guarantee.

%

\end{enumerate}

We begin the paper by studying the related work on stochastic methods in convex minimization and submodular maximization (Section~\ref{sec:related_work}). Then, we proceed by stating stochastic convex minimization problem (Section~\ref{sec:stochastic_convex_min}). We introduce a stochastic conditional gradient method which can be interpreted as a stochastic variant of Frank-Wolfe (FW) algorithm for solving stochastic convex minimization problems (Section~\ref{sec:SFO}). The proposed \algFW method (SFW) differs from the vanilla FW method in replacing gradients by their stochastic approximations evaluated based on averaging over previously observed stochastic gradients. We further analyze the convergence properties of the proposed SFW method (Section~\ref{sec:SFO_analysis}). In particular, we show that the averaging technique in SFW lowers the noise of gradient approximation (Lemma \ref{lemma:bound_on_gradient_error}) and consequently the sequence of objective function values generated by SFW converges to the optimal objective function value at a sublinear rate of $\mathcal{O}(1/t^{1/3})$ in expectation (Theorem \ref{thm:rate_convex}). We complete this result by proving that the sequence of objective function values almost surely converges to the optimal objective function value (Theorem~\ref{thm:almost_sure_cnvrg}).

We then focus on the application of the proposed stochastic conditional gradient method in continuous submodular maximization (Section~\ref{sec:Continuous_Submodular_Maximization}). After defining the notions of submodularity (Section~\ref{defs}), we introduce the \alg (SCG) algorithm for solving continuous submodular maximization problems (Section~\ref{sec:scg}). The proposed SCG algorithm achieves the optimal $(1-1/e)$-approximation when the expected objective function is DR-submodular, monotone, and smooth (Theorem \ref{thm:optimal_bound_greedy}). To be more precise, the expected objective value of the iterates generated by SCG in expectation is not smaller $(1-1/e)OPT-\eps$ after $\mathcal{O}(1/\eps^{3})$ stochastic gradient evaluations. Moreover, for the case that the expected function is not DR-submodular but $\gamma$ weakly DR-submodular, the SCG algorithm obtains an $(1-e^{-\gamma})$-approximation guarantee (Theorem~\ref{thm:optimal_bound_greedy_weak}). We further extend our results to non-monotone setting by introducing the \algNM (NMSCG) method. We show that under the assumptions that the expected function is only DR-submodular and smooth NMSCG reaches a $(1/e)$-approximation guarantee (Theorem~\ref{thm:non_monotone_bound}).

The continuous multilinear extension of discrete submodular functions implies that the results for stochastic continuous DR-submodular maximization can be extended to stochastic discrete submodular maximization.  We formalize this connection by introducing the stochastic discrete submodular maximization problem and defining its continuous multilinear extension (Section~\ref{sec:Discrete_Submodular_Maximization}). By leveraging this connection, one can relax the discrete problem to a stochastic continuous submodular maximization, use SCG to solve the relaxed continuous problem within a $(1-1/e-\epsilon)$ approximation to the optimum value (Theorem~\ref{thm:multi_linear_extenstion_thm}), and use a proper rounding scheme (such as the contention resolution method \citep{chekuri2014submodular}) to obtain a feasible set whose value is a $(1-1/e-\eps)$ approximation to the optimum set in expectation. In summary, we show that SCG achieves an $((1-1/e)OPT-\epsilon)$ approximation for a generic discrete monotone stochastic submodular maximization problem after $\mathcal{O}(n^{3/2}/\eps^{3})$ iterations where $n$ is the size of the ground set (Corollary~\ref{main_corollary}). We further prove a $1/e$ approximation guarantee for NMSCG when we maximize a \textit{non-monotone} stochastic submodular set function (Theorem~\ref{thm:multi_linear_extenstion_thm_non_monotone_case}). Moreover, if the expected set function has curvature $c\in[0,1]$, SCG reaches an $(1/c)(1-e^{-c})$ approximation guarantee (Theorem~\ref{thm:optimal_bound_curvature_greedy}). 
We finally close the paper by concluding remarks (Section~\ref{sec:conclusion}).

\medskip\noindent{\bf Notation.\quad} Lowercase boldface $\bbv$ denotes a vector and uppercase boldface $\bbA$ denotes a matrix. We use $\|\bbv\|$ to denote the Euclidean norm of vector $\bbv$. The $i$-th element of the vector $\bbv$ is written as $v_i$ and the element on the i-$th$ row and $j$-th column of the matrix $\bbA$ is denoted by $\bbA_{i,j}$.

\section{Related Work}\label{sec:related_work}

In this section we overview the literature on conditional gradient methods in convex minimization as well as submodular maximization and compare our novel theoretical guarantees with the existing results.

\medskip\noindent{\bf Convex minimization.\ }The problem of minimizing a stochastic convex function subject to a convex constraint has been tackled by many researchers and many approaches have been reported in the literature. Projected stochastic gradient (SGD) and stochastic variants of Frank-Wolfe algorithm are among the most popular approaches. SGD updates the iterates by descending through the negative direction of stochastic gradient with a proper stepsize  and projecting the resulted point onto the feasible set \citep{robbins1951stochastic,nemirovski1978cezari,nemirovskii1983problem}. Although stochastic gradient computation is inexpensive, the cost of projection step can be high \citep{fujishige2011submodular} or intractable \citep{collins2008exponentiated}. In such cases projection-free conditional gradient methods, a.k.a., Frank-Wolfe algorithm, are more practical \citep{frank1956algorithm,DBLP:conf/icml/Jaggi13}. The online Frank-Wolfe algorithm proposed by \cite{DBLP:conf/icml/HazanK12} requires $\mathcal{O}(1/\eps^4)$ stochastic gradient evaluations to reach $\eps$ suboptimal objective function value, i.e., $\E{f(\bbx)-OPT}\leq \eps$ under the assumption that the objective function is convex and has bounded gradients. The stochastic Frank-Wolfe studied by \cite{DBLP:conf/icml/HazanL16} obtains an improved complexity of $\mathcal{O}(1/\eps^3)$ under the assumptions that the expected objective function is smooth (has Lipschitz gradients) and Lipschitz continuous (the gradients are bounded). More importantly, the stochastic Frank-Wolfe algorithm in \citep{DBLP:conf/icml/HazanL16} requires an increasing batch size $b$ as time progresses, i.e., $b=\mathcal{O}(t)$. In this paper, we propose a stochastic variant of conditional gradient method which achieves the complexity of $\mathcal{O}(1/\eps^3)$ under milder assumptions (only requires smoothness of the expected function) and operates with a fixed batch size, e.g., $b=1$. 

\medskip\noindent{\bf Submodular maximization.\ } Maximizing a deterministic submodular set function has been extensively studied. The celebrated result of \cite{nemhauser1978analysis} shows that a greedy algorithm achieves a $(1-1/e)$ approximation guarantee for a monotone function subject to a cardinality constraint. It is also known that this result is tight  under reasonable complexity-theoretic assumptions \citep{feige1998threshold}. Recently, variants of the greedy algorithm have been proposed to extend the above result to non-monotone and more general constraints \citep{feige2011maximizing, buchbinder2015tight, buchbinder2014submodular, DBLP:conf/icml/MirzasoleimanBK16,feldman2017greed}. While discrete greedy algorithms are fast, they usually do not provide the tightest guarantees for many classes of feasibility constraints. This is why continuous relaxations of submodular functions, e.g., the multilinear extension, have gained a lot of interest \citep{vondrak2008optimal, calinescu2011maximizing, chekuri2014submodular, feldman2011unified, gharan2011submodular, sviridenko2017optimal}. In particular, it is known that the continuous greedy algorithm achieves a $(1-1/e)$ approximation guarantee for monotone submodular functions under a general matroid constraint \citep{calinescu2011maximizing}. An improved   $((1-e^{-c})/c)$-approximation guarantee can be obtained if the objective function has curvature $c$ \citep{vondrak2010submodularity}. 
The problem of maximizing submodular functions also has been studied for the non-monotone case, and constant approximation guarantees have been established 
\citep{feldman2011unified,buchbinder2015tight,DBLP:conf/focs/EneN16,buchbinder2016constrained}.

\begin{table}[t]
\begin{center}
\begin{tabular}{|c|c|c|c|c| c|}
\hline
\textbf{Ref.}& \textbf{setting}  & \textbf{assumptions}& \textbf{batch} & \textbf{rate} &\textbf{complexity}  \\
\hline
\cite{DBLP:conf/icml/Jaggi13} & det.& smooth & --- &$\mathcal{O}(1/{t})$ & ---  \\ 
\hline
\cite{DBLP:conf/icml/HazanK12} & stoch.    &smooth, bounded grad. & $\mathcal{O} (t)$ &$\mathcal{O}(1/{t^{1/2}}) $ &$\mathcal{O}(1/\eps^{4}) $ \\
\hline
\cite{DBLP:conf/icml/HazanL16} & stoch.    &smooth, bounded grad. & $\mathcal{O} (t^2)$ &$\mathcal{O}(1/{t}) $ &$\mathcal{O}(1/\eps^{3}) $ \\
\hline\hline
This paper & stoch.  & smooth, bounded var.& $\mathcal{O} (1)$ &$O(1/{t^{1/3}}) $&$\mathcal{O}(1/\eps^{3}) $ \\
\hline
\end{tabular}
\caption{Convergence guarantees of conditional gradient (FW) methods for convex minimization}
\end{center}
\end{table}

\begin{table}[t]
\begin{center}
\begin{tabular}{| c| c| c| c| c| c| }
\hline
\textbf{Ref.}& \textbf{setting} & \textbf{function}  & \textbf{const.} & \textbf{utility} & \textbf{complexity}  \\
\hline 
 \cite{chekuri2015multiplicative} & det.& mon.smooth sub. & poly. & $(1-1/e) \rm{OPT}-\epsilon $ & $O(1/{\epsilon^{2}})$\\
\hline
\cite{bian16guaranteed} & det. &  mon. DR-sub. &cvx-down & $(1-1/e) \rm{OPT}-\epsilon $ & $O(1/{\epsilon}) $\\
\hline
\cite{DBLP:conf/nips/BianL0B17} & det. &  non-mon. DR-sub. &cvx-down & $(1/{e}) \rm{OPT}-\epsilon $ & $O(1/{\epsilon})$\\
\hline
\cite{hassani2017gradient}  & det. &mon. DR-sub. & convex &$(1/2)\rm{OPT}-\epsilon$& $O(1/{\epsilon})$ \\
\hline
\cite{hassani2017gradient} & stoch. &mon. DR-sub. & convex &$(1/2)\rm{OPT}-\epsilon$& $O(1/{\epsilon^{2}})$ \\
\hline
\cite{hassani2017gradient}  & stoch. & mon. weak DR-sub. &convex & $\frac{\gamma^2}{1+\gamma^2}\rm{OPT}-\epsilon$ & $O(1/{\epsilon^{2}})$\\
\hline
\hline
This paper & stoch. &mon. DR-sub. & convex &$(1-1/e)\rm{OPT}-\epsilon$& $O(1/{\epsilon^{3}})$ \\
\hline
This paper & stoch. & weak DR-sub. &convex & $(1-e^{-\gamma})\rm{OPT}-\epsilon$ & $O(1/{\epsilon^{3}})$\\
\hline
This paper & stoch. &non-mon. DR-sub. & convex &$(1/e)\rm{OPT}-\epsilon$& $O(1/{\epsilon^{3}})$ \\
\hline
\end{tabular}
\caption{Convergence guarantees for continuous DR-submodular function maximization}
\end{center}
\end{table}

\begin{table}[t]
\begin{center}
\begin{tabular}{|c| c| c| c| c| c|  }
\hline
\textbf{Ref.}& \textbf{setting}& \textbf{function}&\textbf{constraint}&\textbf{approx} & \textbf{method} \\
\hline
\cite{nemhauser81mip} & det. & mon. sub. &cardinality & $1-1/e$  &disc. greedy  \\\hline
\cite{nemhauser81mip} & det.  & mon. sub. & matroid & $1/2$ & disc. greedy  \\\hline
\cite{calinescu2011maximizing} & det. & mon. sub.   & matroid & $1-1/e$ & con. greedy\\\hline
\cite{hassani2017gradient} &{stoch. }& {mon. sub. }& {matroid} & ${1/2}$& {SGA} \\\hline\hline
This paper & {stoch. }& {mon. sub. }& {matroid} & $1-1/e$& {SCG } \\\hline
This paper & {stoch. }& {mon. sub. }& {matroid} & $(1-1/e^{c})/c$& {SCG } \\\hline
This paper & {stoch. }& {sub. }& {matroid} & $1/e$& {NMSCG } \\\hline
\end{tabular}
\caption{Convergence guarantees for submodular set function maximization}
\end{center}
\end{table}

Continuous submodularity naturally arises   in many learning applications such as robust budget allocation \citep{staib2017robust,soma2014optimal},  online resource allocation \citep{eghbali2016designing},  learning assignments \citep{golovin2014online}, as well as Adwords for e-commerce and advertising \citep{devanur2012online, mehta2007adwords}.  Maximizing a \textit{deteministic} continuous submodular function dates back to the work of \cite{wolsey1982analysis}. More recently, \cite{chekuri2015multiplicative} proposed a multiplicative weight update algorithm that achieves a $(1-1/e-\epsilon)$ approximation guarantee after $\tilde{O}(n/\epsilon^2)$ oracle calls to gradients of a monotone  smooth submodular function $F$ (i.e., twice differentiable DR-submodular) subject to a polytope constraint. A similar approximation factor can be obtained after $\mathcal{O}(n/\epsilon)$ oracle calls to gradients of $F$ for monotone DR-submodular functions subject to a down-closed convex body using the continuous greedy method \citep{bian16guaranteed}. However, such results require exact computation of the gradients $\nabla F$ which is not feasible in Problem~\eqref{eq:mainproblem}. An alternative approach is then to modify the current algorithms by replacing gradients $\nabla F(\bbx_t)$ by their stochastic estimates $\nabla \tilde{F}(\bbx_t,\bbz_t)$; however, this modification may lead to arbitrarily poor solutions as demonstrated in~\citep{hassani2017gradient}. Another alternative is to estimate the gradient by averaging over a (large) mini-batch of samples at each iteration. While this approach can potentially reduce the noise variance,  it increases the computational complexity of each iteration and is not favorable. The work by \cite{hassani2017gradient} is perhaps the first attempt to solve Problem~\eqref{eq:mainproblem} only by executing stochastic estimates of gradients (without using a large batch). They showed that the stochastic gradient ascent method achieves a $(1/2-\epsilon)$ approximation guarantee after $O(1/\epsilon^2)$ iterations. Although this work opens the door for maximizing stochastic continuous submodular functions  using computationally cheap stochastic gradients, it fails to achieve the optimal $(1-1/e)$ approximation. To close the gap, we propose in this paper \alg which outputs a solution with function value at least  $((1-1/e)\text{OPT}- \epsilon)$ after $O(1/\epsilon^3)$  iterations. Notably, our result only requires the expected function $F$ to be monotone and DR-submodular and the stochastic functions $\tilde{F}$ need not be monotone nor DR-submodular. Moreover, in contrast to the result in \citep{bian16guaranteed}, which holds for down-closed convex constraints, our result holds for any convex constraints. For non-monotone DR-submodular functions, we also propose the non-monotone stochastic continuous greedy (NMSCG) method that achieves a solution with function value at least $((1/e)\text{OPT}-\eps)$ after at most $O(1/\epsilon^3)$ iterations. Crucially, the feasible set in this case should be down-closed or otherwise no constant approximation guarantee is possible \citep{chekuri2014submodular}.

Our result also has important implications for the problem of maximizing a stochastic discrete submodular function subject to a matroid constraint. Since the proposed SCG method works in stochastic settings, we can relax the discrete objective function $f$ to a continuous function $F$ through the multi-linear extension (note that expectation is a linear operator). Then  we can maximize $F$ within a $(1-1/e-\epsilon)$ approximation to the optimum value by using only ${\mathcal{O}}(1/\eps^3)$ oracle calls to the stochastic gradients of $F$ when the functions are monotone. Finally, a proper rounding scheme (such as the contention resolution method \citep{chekuri2014submodular}) results in a feasible set whose value is a $(1-1/e)$ approximation to the optimum set in expectation\footnote{For the ease of presentation, and in the discrete setting, we only present our results for the matroid constraint. However, our stochastic continuous algorithms can provide constant factor approximations for any constrained submodular maximization setting where an efficient and loss-less rounding scheme exists. It includes, for example, knapsack constraints among many others. }. Using the same procedure we can also prove a $(1/e)$ approximation guarantee in expectation for the non-monotone case. Additionally, when the set function $f$ is monotone and has a curvature $c<1$ -- check \eqref{eq:curvature_def} for the definition of the curvature -- we show that the approximation factor can be improved from $(1-1/e)$ to $((1-1/e^c)/c)$.

\section{Stochastic Convex Minimization}\label{sec:stochastic_convex_min}

Many problems in machine learning can be reduced to the minimization of a convex objective function defined as an expectation over a set of random functions. In this section, our focus is on developing a stochastic variant of the conditional gradient method, a.k.a., Frank-Wolfe, which can be applied to solve general stochastic convex minimization problems. To make the notation consistent with other works on stochastic convex minimization, instead of solving Problem \eqref{eq:problem} for the case that the objective function $F$ is concave, we assume that $F$ is convex and intend to minimize it subject to a convex set $\ccalC$. Therefore, the goal is to solve the program 
\begin{equation}\label{eq:convex_problem}
\min_{\bbx\in\ccalC} F(\bbx) := \min_{\bbx\in\ccalC}  \mathbb{E}_{\bbz{\sim} P}\left[{\tilde{F}(\bbx,\bbz)}\right].
\end{equation}
A canonical subset of problems having this form is support vector machines, least mean squares, and logistic regression.

In this section we assume that only the expected (average) function $F$ is convex and smooth, and the stochastic functions $\tilde{F}$ may not be convex nor smooth. Since the expected function $F$ is convex, descent methods can be used to solve the program in \eqref{eq:convex_problem}. However, computing the gradients (or Hessians) of the function $F$ percisely requires access to the distribution $P$, which may not be feasible in many applications. To be more specific, we are interested in settings where the distribution $P$ is either unknown or evaluation of the expected value is computationally prohibitive. In this regime, stochastic gradient descent methods, which operate on stochastic approximations of the gradients, are the mostly used alternatives. In the following section, we aim to develop a stochastic variant of the Frank-Wolfe method which converges to an optimal solution of \eqref{eq:convex_problem}, while it only requires access to a single stochastic gradient $\nabla \tilde{F}(\bbx,\bbz)$ at each iteration. 


\subsection{Stochastic Frank-Wolfe Method}\label{sec:SFO}
In this section, we introduce a stochastic variant of the Frank-Wolfe method to solve Problem  \eqref{eq:convex_problem}. Assume that at each iteration we have access to the stochastic gradient $\nabla \tilde{F}(\bbx,\bbz)$ which is an unbiased estimate of the gradient $\nabla F(\bbx)$. It is known that a naive stochastic implementation of Frank-Wolfe (replacing gradient $\nabla F(\bbx)$ by $\nabla \tilde{F}(\bbx,\bbz)$) might diverge, due to non-vanishing variance of gradient approximations. To resolve this issue, we introduce a stochastic version of the Frank-Wolfe algorithm which reduces the noise of gradient approximations via a common averaging technique in stochastic optimization \citep{ruszczynski1980feasible,ruszczynski2008merit,yang2016parallel,mokhtari2017large}. 

By letting $t \in  \mathbf{N}$ be a discrete time index and $\rho_t$ a given stepsize which approaches zero as $t$ grows, the proposed biased gradient estimation $\bbd_t$ is defined by the recursion  
\begin{equation}\label{eq:grad_averaging}
\bbd_t = (1-\rho_t) \bbd_{t-1} + \rho_t \nabla \tilde{F}(\bbx_t,\bbz_t),
\end{equation}
where the initial vector is given as $\bbd_0=\bb0$. We will show that the averaging technique in \eqref{eq:grad_averaging} reduces the noise of gradient approximation as time increases. More formally, the expected noise of the gradient estimation $\E{\|\bbd_t-\nabla F(\bbx_t)\|^2}$ approaches zero asymptotically. This property implies that the biased gradient estimate $\bbd_t$ is a better candidate for approximating the gradient $\nabla F(\bbx_t)$ comparing to the unbiased gradient estimate $\nabla \tilde{F}(\bbx_t,\bbz_t)$ that suffers from a high variance approximation. We therefore define the descent direction $\bbv_t$ of our proposed  \algFW (SFW) method as the solution of the linear program
\begin{equation}\label{eq:descent_direction}
\bbv_t =\argmin_{\bbv\in \ccalC} \{ \bbd_t^T\bbv \}.
\end{equation}
As in the traditional FW method, the updated variable $\bbx_{t+1}$ is a convex combination of $\bbv_t$ and the iterate $\bbx_t$
\begin{equation}\label{eq:variable_update}
\bbx_{t+1} = (1-\gamma_{t+1}) \bbx_{t} + \gamma_{t+1} \bbv_t,
\end{equation}
where $\gamma_t$ is a proper positive stepsize. Note that each iteration of the proposed SFW method only requires a single stochastic gradient computation, unlike the methods in \citep{DBLP:conf/icml/HazanL16,reddi2016stochastic} which an require increasing number of stochastic gradient evaluations as the number of iterations $t$ grows. 

%
\begin{algorithm}[t]
\caption{ \algFW (SFW)}\label{algo_SFW} 
\begin{algorithmic}[1] 
\small{\REQUIRE Stepsizes $\rho_t>0$ and $\gamma_t>0$. Initialize $\bbd_0=\bb0$ and choose $\bbx_0\in\ccalC$
\FOR {$t=1,2,\ldots$}
   \STATE Update the gradient estimate $\bbd_t = (1-\rho_t) \bbd_{t-1} + \rho_t \nabla \tilde{F}(\bbx_t,\bbz_t)$;
   \STATE Compute $\bbv_t =\argmin_{\bbv\in \ccalC} \{ \bbd_t^T\bbv \}$;
   \STATE Compute the updated variable $\bbx_{t+1} = (1-\gamma_{t+1}) \bbx_{t} + \gamma_{t+1} \bbv_t$;
\ENDFOR}
\end{algorithmic}\end{algorithm}

The proposed SFW is summarized in Algorithm~\ref{algo_SFW}. The core steps are Steps 2-5 which follow the updates in \eqref{eq:grad_averaging}-\eqref{eq:variable_update}. The initial variable $\bbx_0$ can be any feasible vector in the convex set $C$ and the initial gradient estimate is set to be the null vector $\bbd_0=\bb0$. The sequence of positive parameters $\rho_t$ and $\gamma_t$ should be diminishing at proper rates as we describe in the following convergence analysis section.

\subsection{Convergence Analysis}\label{sec:SFO_analysis}

In this section we study the convergence rate of the proposed SFW method for solving the constraint convex program in \eqref{eq:convex_problem}. To do so, we first assume that the following conditions hold.

\begin{assumption}\label{ass:bounded_set_convex}
The convex set $\ccalC$ is bounded with diameter $D$, i.e., for all $\bbx,\bby \in \ccalC$ we can write
\begin{equation}
\|\bbx-\bby\|\leq D.
\end{equation}
\end{assumption}
\begin{assumption}\label{ass:smoothness_convex}
The expected function $F$ is convex. Moreover, its gradients $\nabla F$ are $L$-Lipschitz continuous over the set $\ccalC$, i.e., for all $\bbx,\bby \in \ccalC$
\begin{equation}
\| \nabla F(\bbx) -  \nabla F(\bby) \| \leq L \| \bbx - \bby \|.
\end{equation}
\end{assumption}
\begin{assumption}\label{ass:bounded_variance_convex}
The variance of the unbiased stochastic gradients $\nabla \tilde{F}(\bbx,\bbz)$ is bounded above by $\sigma^2$, i.e., for all random variables $\bbz$ and vectors $\bbx\in\ccalC$ we can write 
\begin{equation}
\E{ \| \nabla \tilde{F}(\bbx,\bbz) - \nabla F(\bbx)  \|^2 } \leq \sigma^2.
\end{equation}
\end{assumption}

Assumption~\ref{ass:bounded_set_convex} is standard in constrained convex optimization and is implied by the fact that the set $\ccalC$ is convex and compact. The condition in Assumption~\ref{ass:smoothness_convex} ensures that the objective function $F$ is smooth on the set $\ccalC$. Note that here we only assume that the (average) function $F$ has Lipschitz continuous gradients, and the stochastic gradients $\nabla \tilde{F}(\bbx,\bbz)$ may not be Lipschitz continuous. Finally, the required condition in Assumption~\ref{ass:bounded_variance_convex} is customary in stochastic optimization and  guarantees that the variance of stochastic gradients $\nabla \tilde{F}(\bbx,\bbz)$ is bounded by a finite constant $\sigma^2<\infty$.

To study the convergence rate of SFW, we first derive an upper bound on the error of gradient approximation $\|\nabla F(\bbx_t) - \bbd_t\|^2$ in the following lemma.

\begin{lemma} \label{lemma:bound_on_gradient_error}
Consider the proposed \algFW (SFW) method outlined in Algorithm~\ref{algo_SFW}. If the conditions in Assumptions \ref{ass:bounded_set_convex}-\ref{ass:bounded_variance_convex} hold, the sequence of squared gradient errors $\|\nabla F(\bbx_t) - \bbd_t\|^2$ satisfies
\begin{align}\label{eq:bound_on_gradient_error}
\E{\|\nabla F(\bbx_t) - \bbd_t\|^2 \mid \ccalF_{t}} \leq \left(1-\frac{\rho_t}{2}\right)\|\nabla F(\bbx_{t-1}) - \bbd_{t-1}\|^2+ \rho_t^2\sigma^2 
+\frac{2L^2D^2\gamma_{t}^2 }{\rho_t},
\end{align}
where $\ccalF_t$ is a sigma-algebra measuring all sources of randomness up to step $t$.
\end{lemma}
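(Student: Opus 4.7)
The plan is to write $e_t := \nabla F(\bbx_t) - \bbd_t$ and exploit the recursive definition of $\bbd_t$ to decompose $e_t$ into three pieces: a scaled version of the previous error, a drift term, and a zero-mean stochastic noise term. Specifically, adding and subtracting $(1-\rho_t)\nabla F(\bbx_{t-1})$ and using \eqref{eq:grad_averaging}, I would write
\begin{equation*}
e_t \;=\; \underbrace{(1-\rho_t)\bigl(\nabla F(\bbx_{t-1})-\bbd_{t-1}\bigr)}_{=:A} \;+\; \underbrace{(1-\rho_t)\bigl(\nabla F(\bbx_t)-\nabla F(\bbx_{t-1})\bigr)}_{=:B} \;+\; \underbrace{\rho_t\bigl(\nabla F(\bbx_t)-\nabla \tilde F(\bbx_t,\bbz_t)\bigr)}_{=:C}.
\end{equation*}
Under $\ccalF_t$, the iterates $\bbx_t,\bbx_{t-1},\bbd_{t-1}$ are all measurable, so $A$ and $B$ are deterministic while only $C$ carries the randomness of $\bbz_t$. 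Unbiasedness of the stochastic gradient gives $\E{C\mid \ccalF_t}=\mathbf{0}$, hence the cross terms involving $C$ vanish after squaring and taking conditional expectation, leaving $\E{\|e_t\|^2\mid \ccalF_t} = \|A+B\|^2 + \E{\|C\|^2\mid \ccalF_t}$. Assumption~\ref{ass:bounded_variance_convex} immediately yields $\E{\|C\|^2\mid \ccalF_t}\le \rho_t^2\sigma^2$, which is the second term on the right of \eqref{eq:bound_on_gradient_error}.

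Next I would bound $\|A+B\|^2$ via Young's inequality $\|A+B\|^2\le (1+\lambda)\|A\|^2 + (1+\lambda^{-1})\|B\|^2$, choosing $\lambda>0$ so that $(1+\lambda)(1-\rho_t)^2 \le 1-\rho_t/2$. A short computation shows that $\lambda = (1-\rho_t/2)/(1-\rho_t)^2 - 1 = \Theta(\rho_t)$ does the job, and in fact gives $1+\lambda^{-1}\le 2/\rho_t$ for all $\rho_t\in(0,1]$; this is the crucial balance that produces the geometric decay factor $(1-\rho_t/2)$ while keeping the error-amplification constant in front of $\|B\|^2$ at order $1/\rho_t$. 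For the $B$ term, $L$-smoothness of $F$ (Assumption~\ref{ass:smoothness_convex}) gives $\|\nabla F(\bbx_t)-\nabla F(\bbx_{t-1})\|\le L\|\bbx_t-\bbx_{t-1}\|$, and the update rule $\bbx_t=(1-\gamma_t)\bbx_{t-1}+\gamma_t\bbv_{t-1}$ together with the diameter bound (Assumption~\ref{ass:bounded_set_convex}) yields $\|\bbx_t-\bbx_{t-1}\|=\gamma_t\|\bbv_{t-1}-\bbx_{t-1}\|\le \gamma_t D$. Hence $\|B\|^2\le L^2D^2\gamma_t^2$, and combining pieces gives $\|A+B\|^2\le (1-\rho_t/2)\|e_{t-1}\|^2 + 2L^2D^2\gamma_t^2/\rho_t$, which together with the noise bound yields exactly \eqref{eq:bound_on_gradient_error}.

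The only delicate step is the tuning of the Young parameter $\lambda$: too small and one is stuck with the $(1-\rho_t)^2$ contraction (too weak to absorb the later smoothness term cleanly in the induction used for the convergence theorem); too large and the $(1+\lambda^{-1})\|B\|^2$ coefficient explodes faster than $1/\rho_t$, preventing the $\gamma_t^2/\rho_t$ scaling that the subsequent rate analysis relies on. Everything else is routine: a three-term decomposition, the tower property to kill cross terms against the unbiased noise, and Lipschitz-plus-diameter to convert the iterate drift $\gamma_t\|\bbv_{t-1}-\bbx_{t-1}\|$ into $\gamma_tD$.
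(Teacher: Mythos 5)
Your proposal is correct and follows essentially the same route as the paper's proof: the identical three-term decomposition obtained by adding and subtracting $(1-\rho_t)\nabla F(\bbx_{t-1})$, vanishing cross terms via unbiasedness of $\nabla\tilde F(\bbx_t,\bbz_t)$, the variance bound $\rho_t^2\sigma^2$, smoothness plus the diameter bound to get $\|B\|\le L\gamma_t D$, and Young's inequality to trade the contraction factor against the $1/\rho_t$ coefficient. The only cosmetic difference is the parametrization of the Young step (the paper fixes $\beta_t=\rho_t/2$ and then uses $(1-\rho_t)^2\le 1-\rho_t$, whereas you tune $\lambda$ to hit $(1+\lambda)(1-\rho_t)^2=1-\rho_t/2$ exactly), and your verification that $1+\lambda^{-1}\le 2/\rho_t$ checks out.
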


\begin{proof}
Check Appendix \ref{app:proof_of_lemma:bound_on_gradient_error}.
\end{proof}

The result in Lemma~\ref{lemma:bound_on_gradient_error} shows that squared error of gradient approximation $\|\nabla F(\bbx_t) - \bbd_t\|^2$ decreases in expecation at each iteration by the factor $(1-\rho_t/2)$ if the remaining terms on the right hand side of \eqref{eq:bound_on_gradient_error} are negligible relative to the term $(1-\rho_t/2)\|\nabla F(\bbx_{t-1}) - \bbd_{t-1}\|^2$. This condition can be satisfied, if the parameters $\rho_t$ and $\gamma_t$ are properly chosen. This observation verifies our intuition that the noise of the stochastic gradient approximation diminishes as the number of iterations increases. 

In the following lemma, we derive an upper bound on the suboptimality $F(\bbx_{t+1})-F(\bbx^*)$ which depends on the norm of gradient error $\|\nabla F(\bbx_t) - \bbd_t\|$.

\begin{lemma} \label{lemma:bound_on_suboptimality}
Consider the proposed \algFW (SFW) method outlined in Algorithm~\ref{algo_SFW}. If the conditions in Assumptions \ref{ass:bounded_set_convex}-\ref{ass:bounded_variance_convex} are satisfied, the suboptimality $F(\bbx_{t+1})-F(\bbx^*)$ satisfies
\begin{align}\label{eq:bound_on_suboptimality}
{F(\bbx_{t+1}) -F(\bbx^*) } 
\leq (1-\gamma_{t+1}){(F(\bbx_{t}) -F(\bbx^*))}+\gamma_{t+1} D\|\nabla F(\bbx_{t})-\bbd_t\| +\frac{LD^2\gamma_{t+1}^2}{2}.
\end{align}
\end{lemma}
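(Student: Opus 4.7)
The plan is to use $L$-smoothness of $F$ together with the update rule to obtain a quadratic upper bound on $F(\bbx_{t+1})$, then carefully handle the cross term $\nabla F(\bbx_t)^{T}(\bbv_t-\bbx_t)$ by splitting it around the true optimum $\bbx^*$ rather than directly invoking Cauchy--Schwarz, which avoids picking up a spurious factor of two on the gradient-error term.

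Concretely, first I would invoke Assumption~\ref{ass:smoothness_convex} at the pair $(\bbx_{t+1},\bbx_t)$ and substitute the update $\bbx_{t+1}-\bbx_t=\gamma_{t+1}(\bbv_t-\bbx_t)$ together with $\|\bbv_t-\bbx_t\|\le D$ from Assumption~\ref{ass:bounded_set_convex}. This yields
\begin{equation*}
F(\bbx_{t+1}) \le F(\bbx_t) + \gamma_{t+1}\,\nabla F(\bbx_t)^{T}(\bbv_t-\bbx_t) + \frac{LD^{2}\gamma_{t+1}^{2}}{2}.
\end{equation*}

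Next, the main manipulation: write $\bbv_t-\bbx_t=(\bbv_t-\bbx^{*})+(\bbx^{*}-\bbx_t)$. For the second summand, convexity of $F$ gives $\nabla F(\bbx_t)^{T}(\bbx^{*}-\bbx_t)\le F(\bbx^{*})-F(\bbx_t)$. For the first summand, add and subtract $\bbd_t$ to obtain
\begin{equation*}
\nabla F(\bbx_t)^{T}(\bbv_t-\bbx^{*}) = \bbd_t^{T}(\bbv_t-\bbx^{*}) + \bigl[\nabla F(\bbx_t)-\bbd_t\bigr]^{T}(\bbv_t-\bbx^{*}).
\end{equation*}
Since $\bbv_t$ minimizes $\bbd_t^{T}\bbv$ over $\ccalC$ (by the definition in \eqref{eq:descent_direction}) and $\bbx^{*}\in\ccalC$, the first piece is non-positive. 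The second piece is bounded by $\|\nabla F(\bbx_t)-\bbd_t\|\,\|\bbv_t-\bbx^{*}\|\le D\|\nabla F(\bbx_t)-\bbd_t\|$ via Cauchy--Schwarz and the diameter bound. Assembling these pieces gives
\begin{equation*}
\nabla F(\bbx_t)^{T}(\bbv_t-\bbx_t) \le F(\bbx^{*})-F(\bbx_t) + D\,\|\nabla F(\bbx_t)-\bbd_t\|.
\end{equation*}

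Finally, I would substitute this inequality back into the smoothness bound, subtract $F(\bbx^{*})$ from both sides, and regroup the terms containing $F(\bbx_t)-F(\bbx^{*})$ to produce the $(1-\gamma_{t+1})$ coefficient, which yields exactly \eqref{eq:bound_on_suboptimality}. There is no genuine obstacle here; the only subtle point is the decomposition around $\bbx^{*}$ instead of directly bounding $\nabla F(\bbx_t)^{T}(\bbv_t-\bbx_t)$, which is what keeps the constant in front of $D\|\nabla F(\bbx_t)-\bbd_t\|$ equal to one rather than two.
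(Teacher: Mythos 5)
Your proposal is correct and follows essentially the same route as the paper's proof: both use $L$-smoothness with the update rule, split the cross term around $\bbx^*$, invoke the LP optimality of $\bbv_t$ against $\bbx^*\in\ccalC$, apply convexity to $\nabla F(\bbx_t)^{T}(\bbx^*-\bbx_t)$, and bound $(\nabla F(\bbx_t)-\bbd_t)^{T}(\bbv_t-\bbx^*)$ by Cauchy--Schwarz and the diameter. The only difference is bookkeeping (you bound $\|\bbv_t-\bbx_t\|^2\le D^2$ upfront, the paper adds and subtracts $\bbd_t$ before recentering at $\bbx^*$), and the paper's argument likewise yields a coefficient of one on $D\|\nabla F(\bbx_t)-\bbd_t\|$, so there is no factor-of-two issue being avoided.
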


\begin{proof}
Check Appendix \ref{app:proof_of_lemma:bound_on_suboptimality}.
\end{proof}

Based on Lemma \ref{lemma:bound_on_suboptimality}, the suboptimality $F(\bbx_{t+1}) -F(\bbx^*) $ approaches zero if the error of gradient approximation $\|\nabla F(\bbx_{t})-\bbd_t\|$ converges to zero sufficiently fast and the last term $({LD^2\gamma_{t+1}^2})/{2}$ is summable, i.e., $\sum_{t=1}^\infty \gamma_{t}^2<\infty$. In the special case of zero approximation error, which is equivalent to the case that we have access to the expected gradient $\nabla F(\bbx_t)$, by setting $\gamma_t=\mathcal{O}(1/t)$ it can be shown that the suboptimality $F(\bbx_{t}) -F(\bbx^*) $ converges to zero at the sublinear rate of $\mathcal{O}(1/t)$. Therefore, the result in Lemma \ref{lemma:bound_on_suboptimality} is consistent with the analysis of Frank-Wolfe method \citep{DBLP:conf/icml/Jaggi13}.

In the following theorem, by using the results in Lemmas \ref{lemma:bound_on_gradient_error} and \ref{lemma:bound_on_suboptimality}, we establish an upper bound on the expected suboptimality $\E{F(\bbx_{t}) -F(\bbx^*)}$.

\begin{theorem} \label{thm:rate_convex}
Consider the proposed \algFW (SFW) method outlined in Algorithm~\ref{algo_SFW}. Suppose the conditions in Assumptions \ref{ass:bounded_set_convex}-\ref{ass:bounded_variance_convex} are satisfied. If we set $\gamma_t=2/(t+8)$ and $\rho_t=4/(t+8)^{2/3}$, then the expected suboptimality $\E{F(\bbx_{t+1})-F(\bbx^*)}$ is bounded above by
\begin{align}\label{eq:rate}
\E{F(\bbx_{t}) -F(\bbx^*) } \leq \frac{Q}{(t+9)^{1/3}},
\end{align}
where the constant $Q$ is given by
\begin{equation}
Q:=\max\left\{9^{1/3}(F(\bbx_{0}) -F(\bbx^*)),\frac{LD^2}{2}+2D\max\left\{2\|\nabla F(\bbx_0) - \bbd_0\|,\left(16\sigma^2+2L^2D^2 \right)^{1/2}\right\} \right\}.
\end{equation}
\end{theorem}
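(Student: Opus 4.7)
The argument is a two-layer induction. The outer layer uses Lemma~\ref{lemma:bound_on_suboptimality} to drive the suboptimality to zero at rate $\mathcal{O}(1/(t+9)^{1/3})$, but the recursion in that lemma contains the gradient-tracking error $\|\nabla F(\bbx_t)-\bbd_t\|$; so the inner layer first exploits Lemma~\ref{lemma:bound_on_gradient_error} to show this error decays at rate $\mathcal{O}(1/(t+9)^{2/3})$ in squared expectation. With both rates in hand, matching the stated constant $Q$ is an exercise in bookkeeping.

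\emph{Step 1: bound the gradient-tracking error.} Plugging $\rho_t=4/(t+8)^{2/3}$ and $\gamma_t=2/(t+8)$ into Lemma~\ref{lemma:bound_on_gradient_error}, a direct computation yields $\rho_t^2\sigma^2+2L^2D^2\gamma_t^2/\rho_t=(16\sigma^2+2L^2D^2)/(t+8)^{4/3}$, so after taking unconditional expectation the recursion becomes
\begin{equation*}
\E{\|\nabla F(\bbx_t)-\bbd_t\|^2} \;\leq\; \Bigl(1-\tfrac{2}{(t+8)^{2/3}}\Bigr)\E{\|\nabla F(\bbx_{t-1})-\bbd_{t-1}\|^2} + \tfrac{16\sigma^2+2L^2D^2}{(t+8)^{4/3}}.
\end{equation*}
I then prove $\E{\|\nabla F(\bbx_t)-\bbd_t\|^2}\le Q_1/(t+9)^{2/3}$ by induction on $t$, where $Q_1$ is an appropriate constant built from $\|\nabla F(\bbx_0)-\bbd_0\|^2$ and $16\sigma^2+2L^2D^2$. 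The inductive step uses the elementary bounds $(1-1/(t+9))^{2/3}\ge 1-(2/3)/(t+9)$ (Bernoulli) and $(t+9)\ge(t+8)^{2/3}$, which together reduce the required inequality to a linear condition on $Q_1$.

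\emph{Step 2: bound the suboptimality.} Taking expectations in Lemma~\ref{lemma:bound_on_suboptimality} and applying Jensen's inequality $\E{\|\cdot\|}\le\sqrt{\E{\|\cdot\|^2}}$ to the first-order noise term, Step~1 delivers
\begin{equation*}
\E{F(\bbx_{t+1})-F(\bbx^*)} \;\leq\; \Bigl(1-\tfrac{2}{t+9}\Bigr)\E{F(\bbx_{t})-F(\bbx^*)} + \tfrac{2D\sqrt{Q_1}}{(t+9)^{4/3}} + \tfrac{2LD^2}{(t+9)^2}.
\end{equation*}
A second induction then shows $\E{F(\bbx_t)-F(\bbx^*)}\le Q/(t+9)^{1/3}$. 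The key numerical step compares $Q/(t+9)^{1/3}$ and $Q/(t+10)^{1/3}$ via the mean-value bound $(t+9)^{-1/3}-(t+10)^{-1/3}\le(1/3)(t+9)^{-4/3}$, after which the induction closes provided $Q$ dominates both the base case $9^{1/3}(F(\bbx_0)-F(\bbx^*))$ and a steady-state term of the form $2D\sqrt{Q_1}+LD^2/2$. Identifying $\sqrt{Q_1}$ with $\max\{2\|\nabla F(\bbx_0)-\bbd_0\|,(16\sigma^2+2L^2D^2)^{1/2}\}$ produces the announced formula for $Q$.

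\emph{Anticipated obstacle.} The algebra, not the ideas, is where this argument is delicate. The two inductions interact because the constant $Q_1$ produced in Step~1 enters the forcing term of Step~2, and that forcing term must be balanced against the $\Theta(1/(t+9))$ contraction coming from $1-\gamma_{t+1}$. Handling the small-$t$ regime -- where the $(t+9)^{-2/3}$-scale noise contribution is not yet dominated by the $(t+9)^{-1/3}$ target -- is precisely what forces the extra $LD^2/2$ summand and the $\max$ structure in the definition of $Q$; getting these constants to line up exactly, rather than up to an unspecified factor, is the only real technical hurdle.
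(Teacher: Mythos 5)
Your proposal is correct and follows essentially the same route as the paper: first establish $\E{\|\nabla F(\bbx_t)-\bbd_t\|^2}\le Q_1/(t+9)^{2/3}$ by induction on the recursion of Lemma~\ref{lemma:bound_on_gradient_error} (the paper packages this induction as a standalone recursion lemma, but the algebra is the same), then plug this into the expected version of Lemma~\ref{lemma:bound_on_suboptimality} via Jensen's inequality and run a second induction to get the $\mathcal{O}(1/(t+9)^{1/3})$ rate with the identical constant $Q$. Your constants $Q_1=\max\{4\|\nabla F(\bbx_0)-\bbd_0\|^2,\,16\sigma^2+2L^2D^2\}$ and the final $\max$ structure of $Q$, including the $LD^2/2$ term arising from $(t+9)^2\ge 4(t+9)^{4/3}$, match the paper's bookkeeping exactly.
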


\begin{proof}
Check Appendix \ref{app:proof_of_thm:rate_convex}.
\end{proof}

The result in Theorem~\ref{thm:rate_convex} indicates that the expected suboptimality $\E{F(\bbx_{t}) -F(\bbx^*) }$ of the iterates generated by the SFW method converges to zero at least at a sublinear rate of $\mathcal{O}(1/t^{1/3})$. In other words, it shows that to achieve the expected suboptimality $\E{F(\bbx_{t}) -F(\bbx^*) }\leq \eps$, the number of required stochastic gradients (sample gradients) to reach this accuracy is $\mathcal{O}(1/\eps^3)$.

 To complete the convergence analysis of SFW we also prove that the sequence of the objective function values $F(\bbx_{t})$ converges to the optimal value $F(\bbx^*) $ almost surely. This result is formalized in the following theorem.

\begin{theorem} \label{thm:almost_sure_cnvrg}
Consider the proposed \algFW (SFW) method outlined in Algorithm~\ref{algo_SFW}. Suppose that the conditions in Assumptions \ref{ass:bounded_set_convex}-\ref{ass:bounded_variance_convex} are satisfied. If we choose $\rho_t$ and $\gamma_t$ such that (i)~$\sum_{t=0}^\infty \rho_t = \infty $, (ii) $\sum_{t=0}^\infty \rho_t^2<\infty $, (iii) $\sum_{t=0}^\infty \gamma_t = \infty $,  and (iv) $\sum_{t=0}^\infty (\gamma_t^2/\rho_t)<\infty $, then the suboptimality ${F(\bbx_{t})-F(\bbx^*)}$ converges to zero almost surely, i.e.,
\begin{align}\label{eq:almost_sure_cnvrg}
\lim_{t \to \infty}  {F(\bbx_{t}) -F(\bbx^*) }\ \myeq \ 0.
\end{align}
\end{theorem}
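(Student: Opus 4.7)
The plan is to invoke the Robbins--Siegmund supermartingale convergence theorem twice: first to show that the gradient-tracking error vanishes, and then to transfer this to almost-sure convergence of the suboptimality.

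First, set $e_t := \|\nabla F(\bbx_t) - \bbd_t\|^2$. Lemma~\ref{lemma:bound_on_gradient_error} (with the indexing interpreted so that the expectation conditions out the noise $\bbz_t$) can be written in Robbins--Siegmund form as
\[
\E{e_t \mid \ccalF_{t-1}} \leq e_{t-1} - \tfrac{\rho_t}{2}\, e_{t-1} + \beta_t, \qquad \beta_t := \rho_t^{2}\sigma^{2} + 2L^{2}D^{2}\,\gamma_t^{2}/\rho_t,
\]
and conditions (ii) and (iv) make $\sum_t \beta_t$ finite deterministically. Robbins--Siegmund therefore yields that $e_t \to e_\infty$ almost surely for some finite $e_\infty \geq 0$, together with $\sum_t \rho_t\, e_{t-1} < \infty$ a.s. Combining this last summability with condition (i), $\sum_t \rho_t = \infty$, forces $\liminf_t e_t = 0$, and then existence of the limit $e_\infty$ upgrades this to $e_t \to 0$, i.e.\ $\|\nabla F(\bbx_t) - \bbd_t\| \to 0$ almost surely.

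Second, I would show that the perturbation appearing in the suboptimality recursion is almost-surely summable. By Cauchy--Schwarz, writing $\gamma_t = (\gamma_t/\sqrt{\rho_t})\cdot \sqrt{\rho_t}$,
\[
\sum_{t=1}^{\infty} \gamma_{t}\,\sqrt{e_{t-1}} \;\leq\; \Bigl(\sum_{t} \gamma_t^{2}/\rho_t\Bigr)^{1/2}\Bigl(\sum_{t} \rho_t\, e_{t-1}\Bigr)^{1/2} \;<\; \infty \quad \text{a.s.},
\]
by condition (iv) and Step~1. Since $\rho_t \to 0$ from (ii), we also have $\gamma_t^{2}\leq \gamma_t^{2}/\rho_t$ eventually, so $\sum_t \gamma_t^{2} < \infty$. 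Now set $a_t := F(\bbx_t) - F(\bbx^{*}) \geq 0$. Shifting indices in Lemma~\ref{lemma:bound_on_suboptimality} yields the pathwise recursion
\[
a_{t+1} \;\leq\; a_t - \gamma_{t+1}\, a_t + \eta_t, \qquad \eta_t := \gamma_{t+1}\, D\, \sqrt{e_t} + \tfrac{LD^{2}}{2}\gamma_{t+1}^{2},
\]
with $\sum_t \eta_t < \infty$ a.s.\ by the preceding paragraph. A second application of Robbins--Siegmund (trivially at the level of conditional expectations, since $a_{t+1}$ is $\ccalF_t$-measurable) gives that $a_t$ converges a.s.\ and $\sum_t \gamma_{t+1}\, a_t < \infty$ a.s. Finally, condition (iii), $\sum_t \gamma_t = \infty$, together with this summability, forces $\liminf_t a_t = 0$; combined with the existence of the almost-sure limit of $a_t$, this delivers $a_t \to 0$ almost surely, which is exactly \eqref{eq:almost_sure_cnvrg}.

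The main obstacle is the cross term $\gamma_{t+1}\,D\,\|\nabla F(\bbx_t) - \bbd_t\|$ in Lemma~\ref{lemma:bound_on_suboptimality}. Neither condition (iii) by itself nor the qualitative vanishing $\|\nabla F(\bbx_t)-\bbd_t\|\to 0$ coming out of Step~1 is enough to secure summability of this perturbation, which is what Robbins--Siegmund on $a_t$ requires. The Cauchy--Schwarz split above, which trades one factor of $\rho_t^{1/2}$ between the stepsize ratio $\gamma_t/\sqrt{\rho_t}$ and the Robbins--Siegmund-produced series $\sum \rho_t\, e_{t-1}$, is the key bridge; assumption (iv) is designed precisely to make that trade succeed, and without it the argument would stall at this step.
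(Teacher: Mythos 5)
Your proof is correct and follows essentially the same route as the paper: a supermartingale (Robbins--Siegmund) argument on $\|\nabla F(\bbx_t)-\bbd_t\|^2$ to obtain $\sum_t \rho_t\|\nabla F(\bbx_{t-1})-\bbd_{t-1}\|^2<\infty$ a.s., followed by a pathwise application of the deterministic convergence lemma to the suboptimality recursion. The only cosmetic difference is that you control the cross term $\gamma_{t+1}D\|\nabla F(\bbx_t)-\bbd_t\|$ by Cauchy--Schwarz across the series, whereas the paper applies Young's inequality termwise ($\gamma_{t+1}\|\cdot\|\le \rho_{t+1}\|\cdot\|^2+\gamma_{t+1}^2/\rho_{t+1}$); both exploit condition (iv) in exactly the same way.
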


\begin{proof}
Check Appendix \ref{app:proof_of_thm:almost_sure_cnvrg}.
\end{proof}

Theorem~\ref{thm:almost_sure_cnvrg} provides almost sure convergence of the sequence of objective function value $F(\bbx_t)$ to the optimal value $F(\bbx^*)$. In other words it shows that the sequence of the objective function values $F(\bbx_t)$ converges to $F(\bbx^*)$ with probability 1. Indeed, a valid set of choices for $\gamma_t$ and $\rho_t$ to satisfy the required conditions in Theorem~\ref{thm:almost_sure_cnvrg} are $\gamma_t=\mathcal{O}(1/t)$ and $\rho_t=\mathcal{O}(1/t^{2/3})$.

\section{Stochastic Continuous Submodular Maximization}\label{sec:Continuous_Submodular_Maximization}

In the previous section, we focused on the convex setting, but what if the objective function $F$ is not convex? In this section, we consider a  broad class of non-convex optimization problems that possess special combinatorial structures. More specifically, we focus on constrained maximization of stochastic continuous submodular functions that  demonstrate diminishing returns, i.e., continuous DR-submodular functions,
\begin{equation}\label{eq:mainproblem}
\max_{\bbx \in \ccalC}\ F(\bbx) \doteq \max_{\bbx \in \ccalC}\ \mathbb{E}_{\bbz\sim P}[{\tilde{F} (\bbx,\bbz)}].
\end{equation}
As before, the functions $\tilde{F}: \ccalX \times \mathcal{Z} \to \reals_{+} $ are stochastic where $\bbx \in \ccalX $ is the optimization variable, $\bbz\in  \mathcal{Z}$ is a realization of the random variable $\bbZ$ drawn from a distribution $P$, and $\ccalX\in \reals^n_+$ is a compact set. Our goal is to maximize the expected value of the random functions $\tilde{F}(\bbx,\bbz)$ over the convex body $\ccalC \subseteq \ccalX$. Note that we \emph{only assume} that $F(\bbx)$ is DR-submodular, and \emph{not} necessarily the stochastic functions $\tilde{F}(\bbx,\bbz)$. We also consider situations where  the distribution $P$ is either unknown (e.g., when the objective is given as an implicit stochastic model) or the domain of the random variable $\bbZ$ is very large (e.g., when the objective is defined in terms of an empirical risk) which makes the cost of computing the expectation very high. In these regimes, stochastic optimization methods, which operate on computationally cheap estimates of gradients, arise as  natural solutions. In fact, very recently, it was shown in \citep{hassani2017gradient} that stochastic gradient methods achieve a $(1/2)$ approximation guarantee to Problem~\eqref{eq:mainproblem}. In Section 3 of \citep{hassani2017gradient}, the authors also showed that if we simply substitute gradients by stochastic gradients in the update of conditional gradient methods (a.k.a., Frank-Wolfe), such as continuous greedy \citep{vondrak2008optimal} or its close variant \citep{bian16guaranteed}, the resulted method can perform arbitrarily poorly in stochastic continuous submodular maximization settings. Our goal in this section is to design a stable stochastic variant of conditional gradient method to solve Problem \eqref{eq:mainproblem} up to a constant factor. 

\subsection{Preliminaries} \label{defs}
We begin by recalling the definition of a submodular set function: A function $f:2^V\rightarrow \reals_+$, defined on the ground set $V$,  is called submodular if for all subsets $A,B\subseteq V$, we have $$f(A)+f(B)\geq f(A\cap B) + f(A\cup B).$$
The notion of submodularity goes beyond the discrete domain \citep{wolsey1982analysis, vondrak2007submodularity, bach2015submodular}.
Consider a continuous function $F: \ccalX \to \reals_{+}$ where the set $\ccalX$ is of  the form $\ccalX=\prod_{i=1}^n\ccalX_i$ and each $\ccalX_i$ is a compact subset of $\reals_+$. We call the continuous function $F$ submodular if for all $\bbx,\bby\in \ccalX$ we have
\begin{align}\label{eq:submodular_def}
F(\bbx) + F(\bby) \geq F(\bbx \vee	 \bby) + F(\bbx \wedge \bby) ,
\end{align}
where $\bbx \vee \bby := \max (\bbx ,\bby )$ (component-wise) and $\bbx \wedge \bby := \min (\bbx ,\bby )$ (component-wise). Further, a submodular function $F$ is monotone (on the set $\ccalX$) if 
\begin{align}\label{eq:monotone_def}
\bbx \leq \bby  \quad \Longrightarrow \quad F(\bbx) \leq  F(\bby),
\end{align}
for all $\bbx,\bby\in \ccalX$. Note that $\bbx \leq \bby$  in \eqref{eq:monotone_def} means that $x_i\leq y_i$ for all $i=1,\dots,n$. Furthermore, a differentiable submodular function $F$ is called \textit{DR-submodular} (i.e., shows diminishing returns) if the gradients are antitone, namely, for all $\bbx,\bby\in \ccalX$ we have 
\begin{align}\label{eq:antitone_def}
\bbx \leq \bby  \quad \Longrightarrow \quad \nabla F(\bbx) \geq \nabla  F(\bby).
\end{align}
When the function $F$ is twice differentiable, submodularity implies that all cross-second-derivatives are non-positive \citep{bach2015submodular}, i.e., 
\begin{equation}
\forall\ i\neq j,\ \ \forall\ \bbx\in \ccalX, ~~ \frac{\partial^2 F(\bbx)}{\partial x_i \partial x_j} \leq 0, 
\end{equation}
 and DR-submodularity implies that all second-derivatives  are  non-positive \citep{bian16guaranteed}, i.e., 
\begin{equation}
\forall\ i,j,\ \ \forall\ \bbx\in \ccalX, ~~ \frac{\partial^2 F(\bbx)}{\partial x_i \partial x_j} \leq 0.
\end{equation}
\subsection{Stochastic Continuous Greedy}\label{sec:scg}
We proceed to introduce, \alg (SCG), which  is a stochastic variant of the continuous greedy method \citep{vondrak2008optimal} to solve Problem~\eqref{eq:mainproblem}.
We only assume that the expected objective function $F$ is monotone and DR-submodular and the stochastic functions $\tilde{F}(\bbx,\bbz)$ may not be monotone nor submodular. Since the objective function $F$ is monotone and DR-submodular, continuous greedy algorithm \citep{calinescu2011maximizing,bian16guaranteed}  can be used in principle to solve Problem~\eqref{eq:mainproblem}. Note that each update of continuous greedy requires computing the gradient of  $F$, i.e., $\nabla F(\bbx):=\mathbb{E}[{\nabla \tilde{F} (\bbx,\bbz)}]$. However, if we only have access to the (computationally cheap) stochastic gradients ${\nabla \tilde{F} (\bbx,\bbz)}$, then the continuous greedy method will not be directly usable \citep{hassani2017gradient}.  
This limitation is due to the non-vanishing variance of gradient approximations. To resolve this issue, we use the gradient averaging technique in Section~\ref{sec:SFO}. As in SFW, we define the estimated gradient $\bbd_t$ by the recursion  
\begin{equation} \label{eq:von1}
\bbd_t = (1-\rho_t) \bbd_{t-1} + \rho_t \nabla \tilde{F}(\bbx_t,\bbz_t),
\end{equation}
where $\rho_t$ is a positive stepsize and the initial vector is defined as $\bbd_0=\bb0$. We therefore define the  ascent direction $\bbv_t$ of our proposed SCG method as follows
%
%
\begin{equation}\label{eq:von2}
\bbv_t =\argmax_{\bbv\in \ccalC} \{ \bbd_t^T\bbv \},
\end{equation}
which is a linear objective maximization over the convex set $\ccalC$. Indeed, if instead of the gradient estimate $\bbd_t$ we use the exact gradient $\nabla F(\bbx_t)$ for the updates in~\eqref{eq:von2}, the continuous greedy update will be recovered. Here, as in continuous greedy, the initial decision vector is the null vector, $\bbx_0=\bb0$. Further, the stepsize for updating the iterates is equal to $1/T$, and the variable $\bbx_t$ is updated as
\begin{equation}\label{eq:von3}
\bbx_{t+1} =  \bbx_{t} + \frac{1}{T} \bbv_t.
\end{equation}
The stepsize $1/T$ and the initialization $\bbx_0=\bb0$ ensure that after $T$ iterations the variable $\bbx_T$ ends up in the convex set $\ccalC$. We would like to highlight that the convex body $\ccalC$ may not be down-closed or contain $\bb0$. Nonetheless, the final iterate $\bbx_T$ returned by SCG will be a feasible point in $\ccalC$. 
The steps of the proposed SCG method are outlined in Algorithm~\ref{algo_SCGGA}. Note that the major difference between SFW in Algorithm~\ref{algo_SFW}  and SCG in Algorithm~\ref{algo_SCGGA} is in Step 4 where the variable $\bbx_{t+1}$ is computed. In SFW, $\bbx_{t+1}$ is a convex combination of $\bbx_t$ and $\bbv_t$, while in SCG $\bbx_{t+1}$ is computed by moving from $\bbx_t$ towards the direction $\bbv_t$ with the stepsize $1/T$.

%
\begin{algorithm}[tb]
\caption{\alg (SCG)}\label{algo_SCGGA} 
\begin{algorithmic}[1] 
{\REQUIRE Stepsizes $\rho_t>0$. Initialize $\bbd_0=\bbx_0=\bb0$
\FOR {$t=1,2,\ldots, T$}
   \STATE Compute $\bbd_t = (1-\rho_t) \bbd_{t-1} + \rho_t \nabla \tilde{F}(\bbx_t,\bbz_t)$;
   \STATE Compute $\bbv_t =\argmax_{\bbv\in \ccalC} \{ \bbd_t^T\bbv \}$;
   \STATE Update the variable $\bbx_{t+1} =\bbx_{t} + \frac{1}{T} \bbv_t$;
\ENDFOR}
\end{algorithmic}\end{algorithm}

We proceed to study the convergence properties of our proposed SCG method for solving Problem~\eqref{eq:mainproblem}. To do so, we first assume that the following conditions hold.

\begin{assumption}\label{ass:bounded_set}
{The Euclidean norm of the elements in the constraint set  \ccalC are uniformly bounded, i.e., for all $\bbx \in \ccalC$ we can write}
\begin{equation}
\|\bbx\|\leq D.
\end{equation}
\end{assumption}
\begin{assumption}\label{ass:smoothness}
The function $F$ is DR-submodular and monotone. Further, its gradients are $L$-Lipschitz continuous over the set $\ccalX$, i.e., for all $\bbx,\bby \in \ccalX$
\begin{equation}
\| \nabla F(\bbx) -  \nabla F(\bby) \| \leq L \| \bbx - \bby \|.
\end{equation}
\end{assumption}
\begin{assumption}\label{ass:bounded_variance}
The variance of the unbiased stochastic gradients $\nabla \tilde{F}(\bbx,\bbz)$ is bounded above by $\sigma^2$, i.e., for any vector $\bbx\in\ccalX$ we can write 
\begin{equation}
\E{\|  \nabla \tilde{F}(\bbx,\bbz) - \nabla F(\bbx)  \|^2} \leq \sigma^2,
\end{equation}
where the expectation is with respect to the randomness of $\bbz \sim P$.
\end{assumption}


Due to the initialization step of  SCG (i.e., starting from $\bb0$) we need a bound on the furthest feasible solution from $\bb0$ that we can end up with; and such a bound is guaranteed by Assumption~\ref{ass:bounded_set}.
%
The condition in Assumption~\ref{ass:smoothness} ensures that the objective function $F$ is smooth. Note  again that $\nabla \tilde{F}(\bbx,\bbz)$ may or may not be Lipschitz continuous. 
%
Finally, the required condition in Assumption~\ref{ass:bounded_variance} guarantees that the variance of stochastic gradients $\nabla\tilde{F}(\bbx,\bbz)$ is bounded by a finite constant $\sigma^2<\infty$. Note that Assumptions \ref{ass:smoothness}-\ref{ass:bounded_variance} are stronger than Assumptions~\ref{ass:smoothness_convex}-\ref{ass:bounded_variance_convex} since they ensure smoothness and bounded gradients for all points $\bbx\in \ccalX$ and not only for the feasible points $\bbx\in \ccalC$.

To study the convergence of SCG, we first derive an upper bound for the expected error of gradient approximation (i.e., $\mathbb{E}[\|\nabla F(\bbx_t) - \bbd_t\|^2]$) in the following lemma.

\begin{lemma}\label{lemma:bound_on_grad_approx_sublinear}
Consider \alg (SCG)  outlined in Algorithm~\ref{algo_SCGGA}.  If  Assumptions \ref{ass:bounded_set}-\ref{ass:bounded_variance} are satisfied and $\rho_t=\frac{4}{(t+8)^{2/3}}$, then for $t=0,\dots,T$ we have
\begin{align}\label{eq:grad_error_bound_2}
\E{ {\|\nabla F(\bbx_{t}) - \bbd_{t}\|^2} }&\leq \frac{Q}{(t+9)^{2/3}},
\end{align}
where $Q:=\max \{ 4\|\nabla F(\bbx_{0}) - \bbd_{0}\|^2  , 16\sigma^2+2L^2 D^2 \}.$
\end{lemma}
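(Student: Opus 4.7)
The plan is to adapt the analysis of Lemma \ref{lemma:bound_on_gradient_error} from the SFW setting (where the per-step increment is $\gamma_t(\bbv_{t-1}-\bbx_{t-1})$) to the SCG setting (where it is simply $\bbv_{t-1}/T$), and then to close the claimed bound by an induction on $t$.

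First I would establish a one-step recursion for $a_t := \E{\|\bbd_t - \nabla F(\bbx_t)\|^2}$. Starting from the update $\bbd_t = (1-\rho_t)\bbd_{t-1}+\rho_t\nabla\tilde F(\bbx_t,\bbz_t)$ and subtracting $\nabla F(\bbx_t)$, I would write
\begin{equation*}
\bbd_t-\nabla F(\bbx_t)=(1-\rho_t)\bigl[\bbd_{t-1}-\nabla F(\bbx_{t-1})+\nabla F(\bbx_{t-1})-\nabla F(\bbx_t)\bigr]+\rho_t\bigl[\nabla\tilde F(\bbx_t,\bbz_t)-\nabla F(\bbx_t)\bigr].
\end{equation*}
Squaring and conditioning on $\ccalF_t$ kills the cross term with the noise by unbiasedness and bounds its squared norm by $\rho_t^2\sigma^2$ via Assumption \ref{ass:bounded_variance}. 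Applying a Young-type inequality $\|a+b\|^2\leq(1+\beta)\|a\|^2+(1+1/\beta)\|b\|^2$ with $\beta$ chosen so that $(1-\rho_t)^2(1+\beta)\leq 1-\rho_t/2$ (concretely $\beta=\rho_t/(2(1-\rho_t))$), together with $L$-smoothness and the \emph{SCG-specific} estimate $\|\bbx_t-\bbx_{t-1}\|=\|\bbv_{t-1}\|/T\leq D/T$ (using $\bbv_{t-1}\in\ccalC$ and Assumption \ref{ass:bounded_set}), yields
\begin{equation*}
a_t\leq\bigl(1-\tfrac{\rho_t}{2}\bigr)a_{t-1}+\rho_t^2\sigma^2+\frac{2L^2D^2}{T^2\rho_t}.
\end{equation*}

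Next I would prove $a_t\leq Q/(t+9)^{2/3}$ by induction on $t$, with the base case $t=0$ coming from the choice $Q\geq 4\|\nabla F(\bbx_0)-\bbd_0\|^2$. For the inductive step, substituting $\rho_t=4/(t+8)^{2/3}$ into the recursion and invoking the hypothesis gives
\begin{equation*}
a_t\leq\frac{Q}{(t+8)^{2/3}}-\frac{2Q}{(t+8)^{4/3}}+\frac{16\sigma^2}{(t+8)^{4/3}}+\frac{L^2D^2(t+8)^{2/3}}{2T^2},
\end{equation*}
and it remains to show this is at most $Q/(t+9)^{2/3}$. Two elementary ingredients do the work: the mean-value estimate $(t+8)^{-2/3}-(t+9)^{-2/3}\leq \tfrac{2}{3}(t+8)^{-5/3}$, which quantifies the index-shift penalty; and the restriction $t\leq T$, which (via $(t+8)^2\leq(T+8)^2\leq 4T^2$) converts the smoothness contribution $L^2D^2(t+8)^{2/3}/(2T^2)$ into a term of order $L^2D^2/(t+8)^{4/3}$ up to a small absolute constant.

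The main obstacle is the arithmetic of closing the induction: the contractive factor $1-\rho_t/2$ produces only a single ``slack'' of size $2Q/(t+8)^{4/3}$, which must simultaneously absorb the variance term $16\sigma^2/(t+8)^{4/3}$, the rewritten smoothness term of order $L^2D^2/(t+8)^{4/3}$, and the $(t+8)\to(t+9)$ shift penalty. The choice $Q\geq 16\sigma^2+2L^2D^2$ is tuned precisely so that these three contributions collectively fit inside the slack, which is why the induction closes with exactly this $Q$.
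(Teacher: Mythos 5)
Your proposal is correct and follows essentially the same route as the paper: the paper derives the identical recursion $\E{\|\nabla F(\bbx_t)-\bbd_t\|^2\mid\ccalF_t}\le(1-\rho_t/2)\|\nabla F(\bbx_{t-1})-\bbd_{t-1}\|^2+\rho_t^2\sigma^2+2L^2D^2/(\rho_t T^2)$ via the same decomposition, unbiasedness of the noise, Young's inequality, and the SCG step bound $\|\bbx_t-\bbx_{t-1}\|\le D/T$ (likewise invoking $t\le T$ and $T\ge 8$ to absorb the $1/T^2$ term into $(t+8)^{-4/3}$), and then closes the induction through a separate general lemma (Lemma~\ref{lemma:induction_result}) whose inductive step uses the algebraic inequality $(u^{\alpha}-1)(u^{\alpha}+1)<u^{2\alpha}$ where you use a mean-value estimate. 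The one blemish—that the base case $t=0$ actually needs $Q\ge 9^{2/3}\|\nabla F(\bbx_0)-\bbd_0\|^2$ rather than $4\|\nabla F(\bbx_0)-\bbd_0\|^2$—is inherited verbatim from the paper's own Lemma~\ref{lemma:induction_result}, which verifies only $\phi_0\le Q/t_0^{\alpha}$ where $\phi_0\le Q/(t_0+1)^{\alpha}$ is required.
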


\begin{proof}
Check Appendix~\ref{proof:lemma:bound_on_grad_approx_sublinear}.
\end{proof}

Let us now use the result of Lemma~\ref{lemma:bound_on_grad_approx_sublinear} to show that the sequence of iterates generated by SCG reaches  a $(1-1/e)$ approximation guarantee for Problem~\eqref{eq:mainproblem}.

\begin{theorem}\label{thm:optimal_bound_greedy}
Consider \alg (SCG)  outlined in Algorithm~\ref{algo_SCGGA}.  If  Assumptions \ref{ass:bounded_set}-\ref{ass:bounded_variance} are satisfied and $\rho_t=\frac{4}{(t+8)^{2/3}}$, then the expected objective function value for the iterates generated by SCG satisfies the inequality
\begin{align}\label{eq:claim_for_sto_greedy}
\E{F(\bbx_T)} \geq (1-1/e) \text{OPT}- \frac{15DQ^{1/2}}{T^{1/3}}-  \frac{LD^2}{2T},
\end{align}
where $\text{OPT}\:=\max_{\bbx \in \ccalC}\ F(\bbx)$ and $Q:=\max \{ 4\|\nabla F(\bbx_{0}) - \bbd_{0}\|^2  , 16\sigma^2+2L^2 D^2 \}$.
\end{theorem}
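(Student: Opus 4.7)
\textbf{Proof proposal for Theorem~\ref{thm:optimal_bound_greedy}.}

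My plan is to mimic the analysis of the deterministic continuous greedy algorithm but carry along an additive error term coming from the gap $\nabla F(\bbx_t)-\bbd_t$, which is controlled by Lemma~\ref{lemma:bound_on_grad_approx_sublinear}. First, since $\nabla F$ is $L$-Lipschitz and $\bbx_{t+1}-\bbx_t=\bbv_t/T$ with $\|\bbv_t\|\leq D$ by Assumption~\ref{ass:bounded_set}, the descent lemma gives
\begin{equation*}
F(\bbx_{t+1})\ \geq\ F(\bbx_t)+\tfrac{1}{T}\langle\nabla F(\bbx_t),\bbv_t\rangle-\tfrac{LD^2}{2T^2}.
\end{equation*}
Next, use the linear-maximization step $\langle\bbd_t,\bbv_t\rangle\geq\langle\bbd_t,\bbx^*\rangle$ (where $\bbx^*$ is a maximizer) to write
\begin{equation*}
\langle\nabla F(\bbx_t),\bbv_t\rangle\ =\ \langle\bbd_t,\bbv_t\rangle+\langle\nabla F(\bbx_t)-\bbd_t,\bbv_t\rangle\ \geq\ \langle\nabla F(\bbx_t),\bbx^*\rangle+\langle\nabla F(\bbx_t)-\bbd_t,\bbv_t-\bbx^*\rangle.
\end{equation*}
Cauchy--Schwarz together with the triangle inequality and $\|\bbv_t\|,\|\bbx^*\|\leq D$ bound the last inner product by $2D\|\nabla F(\bbx_t)-\bbd_t\|$.

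The second ingredient is the standard monotone DR-submodular inequality: for any monotone DR-submodular $F$ and any $\bbx,\bby\in\ccalX$ one has $\langle\nabla F(\bbx),\bby\rangle\geq F(\bbx\vee\bby)-F(\bbx)\geq F(\bby)-F(\bbx)$. Applying this with $\bby=\bbx^*$ yields $\langle\nabla F(\bbx_t),\bbx^*\rangle\geq F(\bbx^*)-F(\bbx_t)$. Substituting back and rearranging produces the one-step contraction
\begin{equation*}
F(\bbx^*)-F(\bbx_{t+1})\ \leq\ \Bigl(1-\tfrac{1}{T}\Bigr)\bigl(F(\bbx^*)-F(\bbx_t)\bigr)+\tfrac{2D}{T}\|\nabla F(\bbx_t)-\bbd_t\|+\tfrac{LD^2}{2T^2}.
\end{equation*}

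Now take expectations and apply Jensen's inequality ($\E{\|\cdot\|}\leq\sqrt{\E{\|\cdot\|^2}}$) together with Lemma~\ref{lemma:bound_on_grad_approx_sublinear} to replace $\E{\|\nabla F(\bbx_t)-\bbd_t\|}$ by $Q^{1/2}/(t+9)^{1/3}$. Unrolling the resulting linear recursion from $t=0$ to $t=T-1$, using $(1-1/T)^T\leq 1/e$ on the initial term $F(\bbx^*)-F(\bbx_0)\leq F(\bbx^*)=\mathrm{OPT}$ (recall $\bbx_0=\mathbf{0}$ and $F\geq 0$), and bounding the noise contribution by an integral
\begin{equation*}
\tfrac{1}{T}\sum_{t=0}^{T-1}(t+9)^{-1/3}\ \leq\ \tfrac{1}{T}\int_{8}^{T+8} s^{-1/3}\,ds\ \leq\ \tfrac{3}{2}\,T^{-1/3},
\end{equation*}
yields $\mathrm{OPT}-\E{F(\bbx_T)}\leq \mathrm{OPT}/e+\tfrac{15DQ^{1/2}}{T^{1/3}}+\tfrac{LD^2}{2T}$ after tracking constants carefully (the factor $15$ absorbs slack from the integral bound and from the $(1-1/T)^{T-1-t}$ weights, which are each $\leq 1$).

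The main obstacle is the constant $15$: the crude integral bound above gives a smaller prefactor, so to reproduce the stated value one must keep the correct leading $2D$ factor in the recursion, avoid losing powers of $T$ when replacing $(t+9)^{2/3}$ by $(T+9)^{2/3}$, and handle the boundary terms of the geometric-type sum. Everything else---smoothness, linear-maximization optimality of $\bbv_t$, and the DR-submodular gradient inequality---is routine once Lemma~\ref{lemma:bound_on_grad_approx_sublinear} is in hand.
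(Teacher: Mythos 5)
Your proposal is correct and follows essentially the same route as the paper's proof: the only cosmetic difference is that you bound $\langle \bbv_t-\bbx^*,\nabla F(\bbx_t)-\bbd_t\rangle$ by Cauchy--Schwarz and then apply Jensen to $\E{\|\nabla F(\bbx_t)-\bbd_t\|}$, whereas the paper uses Young's inequality with the optimally tuned $\beta_t=Q^{1/2}/(2D(t+9)^{1/3})$, which yields the identical term $2DQ^{1/2}/((t+9)^{1/3}T)$. One small slip: your inline claim $\frac{1}{T}\int_{8}^{T+8}s^{-1/3}\,ds\leq\frac{3}{2}T^{-1/3}$ is false for small $T$ (the integral gives $\frac{3}{2}(T+8)^{2/3}/T$, and one needs $(T+8)^{2/3}\leq 5T^{2/3}$ to land on the $\frac{15}{2}T^{-1/3}$ bound, exactly as the paper does), but you correctly flag that the constant $15$ comes from this bookkeeping, so the argument is sound.
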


\begin{proof}
Check Appendix~\ref{proof:thm:optimal_bound_greedy}.
\end{proof}

The result in Theorem \ref{thm:optimal_bound_greedy} shows that the sequence of iterates generated by SCG, which only has access to a noisy unbiased estimate of the gradient at each iteration, is able to achieve the optimal approximation bound $(1-1/e)$, while the error term vanishes at a sublinear rate of $\mathcal{O}(T^{-1/3})$.

\subsection{Weak Submodularity}

In this section, we extend our results to a more general case where the expected objective function $F$ is weakly-submodular. A continuous function $F$ is $\gamma$-weakly DR-submodular if 
\begin{equation}
\gamma = \inf_{\bbx,\bby\in \ccalX,  \bbx\leq \bby} \ \inf_{i\in [n]}\ \frac{[\nabla F(\bbx)]_i}{[\nabla F(\bby)]_i},
\end{equation}
where $[\bba]_i$ denotes the $i$-th element of vector $\bba$. See \citep{eghbali2016designing} for related definitions. In the following theorem, we prove that the proposed SCG method achieves a $ (1-e^{-\gamma})$ approximation guarantee when the expected function $F$ is monotone and weakly DR-submodular with parameter~$\gamma$.

\begin{theorem}\label{thm:optimal_bound_greedy_weak}
Consider \alg (SCG)  outlined in Algorithm~\ref{algo_SCGGA}.  If  Assumptions \ref{ass:bounded_set}-\ref{ass:bounded_variance} are satisfied and the function $F$ is $\gamma$-weakly DR-submodular, then for  $\rho_t=\frac{4}{(t+8)^{2/3}}$ the expected objective function value of the iterates generated by SCG satisfies the inequality
\begin{align}\label{eq:claim_for_sto_greedy_weak}
\E{F(\bbx_T)} \geq (1-e^{-\gamma}) \text{OPT}- \frac{15DQ^{1/2}}{T^{1/3}}-  \frac{LD^2}{2T},
\end{align}
where $\text{OPT}\:=\max_{\bbx \in \ccalC}\ F(\bbx)$ and $Q:=\max \{ 4\|\nabla F(\bbx_{0}) - \bbd_{0}\|^2 , 16\sigma^2+2L^2 D^2 \}.$
\end{theorem}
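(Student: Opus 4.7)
\textbf{Proof Plan for Theorem~\ref{thm:optimal_bound_greedy_weak}.}
The plan is to follow the same template as the proof of Theorem~\ref{thm:optimal_bound_greedy}, with a single modification at the step that invokes DR-submodularity, replacing it with the weak DR-submodularity inequality so that the rate factor $\gamma$ propagates into the final approximation factor.

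First, I would establish a one-step improvement inequality using smoothness of $F$ (Assumption~\ref{ass:smoothness}) together with the update rule $\bbx_{t+1}=\bbx_t+\tfrac{1}{T}\bbv_t$ and the bound $\|\bbv_t\|\le D$ (Assumption~\ref{ass:bounded_set}). This yields
\begin{equation*}
F(\bbx_{t+1})\ \ge\ F(\bbx_t)+\tfrac{1}{T}\langle\nabla F(\bbx_t),\bbv_t\rangle-\tfrac{LD^2}{2T^2}.
\end{equation*}
Next I would use the linear-maximization step $\bbv_t=\arg\max_{\bbv\in\ccalC}\bbd_t^\top\bbv$ to write, for any fixed optimizer $\bbx^\ast\in\ccalC$,
\begin{equation*}
\langle\nabla F(\bbx_t),\bbv_t\rangle\ =\ \langle\bbd_t,\bbv_t\rangle+\langle\nabla F(\bbx_t)-\bbd_t,\bbv_t\rangle\ \ge\ \langle\bbd_t,\bbx^\ast\rangle+\langle\nabla F(\bbx_t)-\bbd_t,\bbv_t\rangle,
\end{equation*}
and then rewrite this as $\langle\nabla F(\bbx_t),\bbx^\ast\rangle+\langle\nabla F(\bbx_t)-\bbd_t,\bbv_t-\bbx^\ast\rangle$. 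The error term is controlled by Cauchy--Schwarz and $\|\bbv_t-\bbx^\ast\|\le 2D$.

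The critical step is to lower bound $\langle\nabla F(\bbx_t),\bbx^\ast\rangle$. Since $\bbx_t\le \bbx_t\vee\bbx^\ast$, the $\gamma$-weak DR-submodularity assumption gives $\nabla F(\bbx_t)\ge \gamma\,\nabla F(\bbx_t\vee\bbx^\ast)$ componentwise. Because $\bbx^\ast\ge 0$, concavity of $F$ along the non-negative direction $\bbx^\ast$ evaluated at $\bbx_t\vee\bbx^\ast$ yields $\langle\nabla F(\bbx_t\vee\bbx^\ast),\bbx^\ast\rangle\ge F(\bbx_t\vee\bbx^\ast\vee\bbx^\ast)-F(\bbx_t\vee\bbx^\ast)$; more directly, combining the two inequalities and monotonicity gives
\begin{equation*}
\langle\nabla F(\bbx_t),\bbx^\ast\rangle\ \ge\ \gamma\bigl(F(\bbx_t\vee\bbx^\ast)-F(\bbx_t)\bigr)\ \ge\ \gamma\bigl(\mathrm{OPT}-F(\bbx_t)\bigr).
\end{equation*}

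Plugging this into the one-step inequality and taking expectations yields the recursion
\begin{equation*}
\mathrm{OPT}-\E{F(\bbx_{t+1})}\ \le\ \Bigl(1-\tfrac{\gamma}{T}\Bigr)\bigl(\mathrm{OPT}-\E{F(\bbx_t)}\bigr)+\tfrac{2D}{T}\E{\|\nabla F(\bbx_t)-\bbd_t\|}+\tfrac{LD^2}{2T^2}.
\end{equation*}
Unrolling from $t=0$ to $T-1$ and using $(1-\gamma/T)^T\le e^{-\gamma}$ produces the factor $(1-e^{-\gamma})\mathrm{OPT}$ in place of the $(1-1/e)\mathrm{OPT}$ obtained in Theorem~\ref{thm:optimal_bound_greedy}. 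The residual error terms are handled exactly as in Theorem~\ref{thm:optimal_bound_greedy}: using Jensen's inequality together with Lemma~\ref{lemma:bound_on_grad_approx_sublinear} bounds $\E{\|\nabla F(\bbx_t)-\bbd_t\|}\le Q^{1/2}/(t+9)^{1/3}$, and summing $\sum_{t=0}^{T-1}(1-\gamma/T)^{T-1-t}(t+9)^{-1/3}\le\sum_{t=0}^{T-1}(t+9)^{-1/3}=O(T^{2/3})$ gives the overall error $O(DQ^{1/2}/T^{1/3})+O(LD^2/T)$, matching the claimed constant bound.

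The only substantive change from the monotone DR-submodular case is the weak-submodularity step, and the main obstacle is verifying that this replacement does not corrupt the careful constant tracking that yields the explicit $15DQ^{1/2}/T^{1/3}$ bound; since the weak DR-submodularity inequality replaces a $1$ by a $\gamma\in(0,1]$ only in the coefficient of the OPT-progress term (and not in any of the smoothness or gradient-variance terms), the same constants propagate through the recursion unchanged, so the bound in \eqref{eq:claim_for_sto_greedy_weak} follows.
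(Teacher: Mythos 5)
Your proposal is correct and follows essentially the same route as the paper's proof: the same smoothness-based one-step bound, the same substitution $\langle \bbx^*,\nabla F(\bbx_t)\rangle \geq \gamma(\mathrm{OPT}-F(\bbx_t))$ from weak DR-submodularity and monotonicity, the same use of Lemma~\ref{lemma:bound_on_grad_approx_sublinear}, and the same unrolling with $(1-\gamma/T)^T\leq e^{-\gamma}$. The only cosmetic difference is that you control the gradient-error term via Cauchy--Schwarz plus Jensen rather than the paper's Young's inequality with a tuned $\beta_t$, but both give the identical $2DQ^{1/2}/\bigl((t+9)^{1/3}T\bigr)$ contribution and hence the same final constants.
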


\begin{proof}
Check Appendix~\ref{proof:thm:optimal_bound_greedy_weak}.
\end{proof}

\subsection{Non-monotone Continuous Submodular Maximization}
In this section, we aim to extend the results for the proposed \alg algorithm to maximize \textit{non-monotone} stochastic DR-submodular. The problem formulation of interest is similar to Problem \eqref{eq:mainproblem} except the facts that the objective function ${F}: \ccalX  \to \reals_{+} $ may not be monotone and the set $\ccalX$ is a bounded box. To be more precise, we aim to solve the program 
\begin{equation}\label{eq:mainproblem_nonmonotone}
\max_{\bbx \in \ccalC}\ F(\bbx) \doteq \max_{\bbx \in \ccalC}\ \mathbb{E}_{\bbz\sim P}[{\tilde{F} (\bbx,\bbz)}],
\end{equation}
where $F:\ccalX\to\reals$ is continuous DR-submodular, $\ccalX=\prod_{i=1}^n \ccalX_i$, each $\ccalX_i=[\underline{u}_i,\bar{u}_i]$ is a bounded interval, and the convex set $\ccalC$ is a subset of $\ccalX=[\underline{\bbu},\bar{\bbu}]$, where $\underline{\bbu}=[\underline{u}_1;\dots;\underline{u}_n]$ and $\bar{\bbu}=[\bar{u}_1;\dots;\bar{u}_n]$. In this section, we propose the first stochastic conditional gradient method for solving the stochastic non-monotone maximization problem in \eqref{eq:mainproblem_nonmonotone}. In this section, we further assume that the convex set $\ccalC$ is down-closed and $\bb0\in\ccalC$.

%
\begin{algorithm}[tb]
\caption{\algNM (NMSCG)}\label{algo_NMSCG} 
\begin{algorithmic}[1] 
{\REQUIRE Stepsizes $\rho_t>0$. Initialize $\bbd_0=\bbx_0=\bb0$
\FOR {$t=1,2,\ldots, T$}
   \STATE Compute $\bbd_t = (1-\rho_t) \bbd_{t-1} + \rho_t \nabla \tilde{F}(\bbx_t,\bbz_t)$;
   \STATE Compute $\bbv_t =\argmax_{\bbv\in \ccalC, \bbv\leq \bar{\bbu}-\bbx_t}  \{ \bbd_t^T\bbv \}$;
   \STATE Update the variable $\bbx_{t+1} =\bbx_{t} + \frac{1}{T} \bbv_t$;
\ENDFOR}
\end{algorithmic}\end{algorithm}

We introduce a variant of the \alg method that achieves a $(1/e)$-approximation guarantee for Problem \eqref{eq:mainproblem_nonmonotone}. The proposed \algNM (NMSCG) method is inspired by the unified measured
continuous greedy algorithm in \citep{feldman2011unified} and the Frank-Wolfe method in \citep{DBLP:conf/nips/BianL0B17} for non-monotone \textit{deterministic} continuous submodular maximization. The steps of NMSCG are summarized in Algorithm \ref{algo_NMSCG}. The stochastic gradient update (Step 2) and the update of the variable (Step 4) are identical to the ones for SCG in Section \ref{sec:scg}. The main difference between NMSCG and SCG is in the computation of the ascent direction $\bbv_t$. In particular, in NMSCG the ascent direction vector $\bbv_t$ is obtained by solving the linear program 
\begin{align}
\bbv_t =\argmax_{\bbv\in \ccalC, \bbv\leq \bar{\bbu}-\bbx_t}  \{ \bbd_t^T\bbv \},
\end{align}
which differs from \eqref{eq:von2} by having the extra constraint $\bbv\leq \bar{\bbu}-\bbx_t $. This extra condition is added to ensure that the solution does not grow aggressively, since in non-monotone case dramatic growth of the solution may lead to poor performance. In NMSCG, the initial variable is $\bbx_0=\bb0$, which is a legitimate initialization as we assume that the convex set $\ccalC$ is down-closed. In the following theorem, we establish a $1/e$- guarantee for NMSCG. 

\begin{theorem}\label{thm:non_monotone_bound}
Consider  \algNM (NMSCG)  outlined in Algorithm~\ref{algo_NMSCG} with the averaging parameter $\rho_t=4/(t+8)^{2/3}$.  If Assumptions~\ref{ass:bounded_set} and \ref{ass:bounded_variance} hold and the gradients $\nabla F$ are $L$-Lipschitz continuous and the convex set $\ccalC$ is down-closed, then the iterate $\bbx_T$ generated by NMSCG satisfies the inequality
\begin{align}\label{claim:non_monotone_bound}
\E{F(\bbx_{T})} \geq  e^{-1}F(\bbx^*)
-\frac{15DQ^{1/2}}{T^{1/3}}   -  \frac{LD^2}{2T},
\end{align}
where $Q:=\max \{ 4 \|\nabla F(\bbx_{0}) - \bbd_{0}\|^2  , 16\sigma^2+2L^2 D^2 \}.$
\end{theorem}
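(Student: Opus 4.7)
The plan is to adapt the proof of Theorem~\ref{thm:optimal_bound_greedy} to the non-monotone setting, introducing two new ingredients: (i) an invariant showing that NMSCG never pushes $\bbx_t$ too close to the box corner $\bar{\bbu}$, and (ii) the classical measured-continuous-greedy lemma lower-bounding $F(\bbx_t\vee\bbx^*)$.

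First I would establish by induction on $t$ that $\bar{\bbu}-\bbx_t\ge(1-1/T)^t\bar{\bbu}$ componentwise. The base case $t=0$ holds since $\bbx_0=\bb0$; for the step, the constraint $\bbv_t\le\bar{\bbu}-\bbx_t$ imposed in Step~3 of Algorithm~\ref{algo_NMSCG} yields $\bar{\bbu}-\bbx_{t+1}=\bar{\bbu}-\bbx_t-\tfrac{1}{T}\bbv_t\ge(1-1/T)(\bar{\bbu}-\bbx_t)$. Combined with the continuous analog of the Feige--Mirrokni--Vondr\'ak lemma (see \citep{feldman2011unified,DBLP:conf/nips/BianL0B17})---namely, for non-negative DR-submodular $F$ and any $\bbx\le a\bar{\bbu}$ one has $F(\bbx\vee\bbx^*)\ge(1-a)F(\bbx^*)$---this invariant yields the crucial bound $F(\bbx_t\vee\bbx^*)\ge(1-1/T)^tF(\bbx^*)$.

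Next I would carry out the per-step descent analysis. $L$-smoothness and $\|\bbv_t\|\le D$ yield
\begin{equation*}
F(\bbx_{t+1})\ge F(\bbx_t)+\tfrac{1}{T}\langle\nabla F(\bbx_t),\bbv_t\rangle-\tfrac{LD^2}{2T^2}.
\end{equation*}
The competitor $(\bbx^*-\bbx_t)_+$ is feasible for the linear program defining $\bbv_t$: it belongs to $\ccalC$ because $\ccalC$ is down-closed and $(\bbx^*-\bbx_t)_+\le\bbx^*\in\ccalC$, and it satisfies the box constraint since $\bbx^*\le\bar{\bbu}$ implies $(\bbx^*-\bbx_t)_+\le\bar{\bbu}-\bbx_t$. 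Therefore $\langle\bbd_t,\bbv_t\rangle\ge\langle\bbd_t,(\bbx^*-\bbx_t)_+\rangle$; adding and subtracting $\nabla F(\bbx_t)$, applying Cauchy--Schwarz with $\|\bbv_t-(\bbx^*-\bbx_t)_+\|\le 2D$, and invoking the concavity of $F$ along nonnegative directions (which gives $\langle\nabla F(\bbx_t),(\bbx^*-\bbx_t)_+\rangle\ge F(\bbx_t\vee\bbx^*)-F(\bbx_t)$), I obtain
\begin{equation*}
\E{F(\bbx_{t+1})}\ge(1-1/T)\E{F(\bbx_t)}+\tfrac{1}{T}(1-1/T)^tF(\bbx^*)-\tfrac{2D}{T}\E{\|\nabla F(\bbx_t)-\bbd_t\|}-\tfrac{LD^2}{2T^2}.
\end{equation*}

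Unrolling this recursion from $t=0$ with $F(\bbx_0)\ge0$, the signal terms sum to $(1-1/T)^{T-1}F(\bbx^*)\ge e^{-1}F(\bbx^*)$, using that $(1-1/T)^{T-1}\ge 1/e$ for every $T\ge1$. Lemma~\ref{lemma:bound_on_grad_approx_sublinear} together with Jensen's inequality gives $\E{\|\nabla F(\bbx_t)-\bbd_t\|}\le Q^{1/2}/(t+9)^{1/3}$, and $\sum_{t=0}^{T-1}(t+9)^{-1/3}=\mathcal{O}(T^{2/3})$ produces the $\mathcal{O}(DQ^{1/2}/T^{1/3})$ error term, while the smoothness correction sums to $LD^2/(2T)$. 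Tracking the numerical constants recovers~\eqref{claim:non_monotone_bound}.

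The main obstacle relative to Theorem~\ref{thm:optimal_bound_greedy} is producing a competitor for the noisy linear maximization in Step~3 that respects the additional box constraint $\bbv\le\bar{\bbu}-\bbx_t$ \emph{and} still connects back to $F(\bbx^*)$. The monotone choice $\bbv=\bbx^*$ violates the box constraint, while $(\bbx^*-\bbx_t)_+$ is admissible only because $\ccalC$ is down-closed; the price paid is the attenuated factor $(1-1/T)^t$ coming from the non-monotone lemma, which is precisely what drops the approximation ratio from $1-1/e$ to $1/e$.
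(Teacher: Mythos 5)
Your proposal is correct and follows essentially the same route as the paper's proof: the induction showing $\bbx_t\le\bar{\bbu}\bigl(1-(1-1/T)^t\bigr)$, the measured-greedy lemma giving $F(\bbx_t\vee\bbx^*)\ge(1-1/T)^tF(\bbx^*)$, the competitor $(\bbx^*-\bbx_t)_+=\bbx_t\vee\bbx^*-\bbx_t$ (feasible by down-closedness), Cauchy--Schwarz with the $2D$ bound, concavity along nonnegative directions, and the recursion closed with Lemma~\ref{lemma:bound_on_grad_approx_sublinear} and $(1-1/T)^{T-1}\ge e^{-1}$. No substantive differences to report.
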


\begin{proof}
See Section \ref{proof:thm:non_monotone_bound}.
\end{proof}

The result in Theorem \ref{thm:non_monotone_bound} states that the sequence of iterates generated by NMSCG achieves a $((1/e)OPT-\eps)$ approximation guarantee after $\mathcal{O}(1/\eps^3)$ stochastic gradient computations. 

\section{Stochastic Discrete Submodular Maximization}\label{sec:Discrete_Submodular_Maximization}

Even though submodularity has been mainly studied in  discrete domains \citep{fujishige2005submodular}, many efficient methods for optimizing  such submodular set functions rely on continuous relaxations either through a multi-linear extension \citep{vondrak2008optimal} (for maximization) or Lovas extension \citep{lovasz1983submodular} (for minimization). In fact, Problem~\eqref{eq:mainproblem} has a discrete counterpart, recently considered in \citep{hassani2017gradient, karimi2017stochastic}:
\begin{equation}\label{eq:stochsub}
\max_{S\in \ccalI}f(S) \doteq \max_{S\in \ccalI} \mbE_{\bbz\sim P} [\tilde{f}(S, \bbz)],
\end{equation}
where the function $f:2^V  \rightarrow\reals_+$ is submodular, the functions $\tilde{f}:2^V \times \mathcal{Z} \rightarrow\reals_+$ are stochastic, $S$ is the optimization set variable defined over a ground set $V$, $\bbz\in \mathcal{Z}$ is the realization of a random variable $\bbZ$ drawn from the distribution $P$, and $\ccalI$ is a general matroid constraint.   Since $P$ is unknown, problem~\eqref{eq:stochsub} cannot be directly solved using the current state-of-the-art techniques. Instead, \cite{hassani2017gradient} showed that by lifting the problem to the continuous domain (via multi-linear relaxation) and using stochastic gradient methods on a continuous relaxation to reach a solution that is within a factor $(1/2)$ of the optimum. Contemporarily, \citep{karimi2017stochastic} used a concave relaxation technique to provide a $(1-1/e)$ approximation for the class of submodular coverage functions. Our work closes the gap for maximizing the stochastic submodular set maximization, namely, Problem~\eqref{eq:stochsub}, by providing the  first tight $(1-1/e)$ approximation guarantee for general monotone submodular set functions subject to a matroid constraint. For the non-monotone case, we obtain a $(1/e)$ approximation guarantee.
We, further, show that a $((1-e^{-c})/c)$-approximation guarantee can be achieved when the function $f$ has curvature $c$.

According to the results in Section~\ref{sec:Continuous_Submodular_Maximization}, SCG achieves in expectation a  $(1-1/e)$-optimal solution for Problem~\eqref{eq:mainproblem} when the function $F$ is monotone and DR-submodular, and NMSCG obtains $(1/e)$-optimal solution for the non-monotone case. The focus of this section is on extending these results into the discrete domain and showing that  SCG and NMSCG can be used to maximize a stochastic submodular \emph{set} function $f$, namely Problem~\eqref{eq:stochsub}, through the multilinear extension of the function $f$. To be more precise, in lieu of solving the program in~\eqref{eq:stochsub}
%
%
%
%
one can solve the continuous optimization problem
\begin{align}\label{eq:multilinear_program}
\max_{\bbx \in \ccalC} \ F(\bbx) \ = \ \max_{\bbx \in \ccalC} \ \sum_{S\subset V}f(S) \prod_{i\in S} x_i \prod_{j\notin S} (1-x_j),
\end{align}
where $F$ is the multilinear extension of the function $f$ 
and the convex set $\ccalC= \text{conv}\{1_{I} : I\in \ccalI \}$ is the matroid polytope \citep{calinescu2011maximizing} which is down-closed (note that in \eqref{eq:multilinear_program}, $x_i$ denotes the $i$-th element of the vector $\bbx$). The fractional solution of the program  \eqref{eq:multilinear_program} can then be rounded into a feasible discrete solution without any loss (in expectation) in objective  value by methods such as  randomized PIPAGE ROUNDING \citep{calinescu2011maximizing}. Note that randomized PIPAGE ROUNDING requires $O(n)$ computational complexity \citep{karimi2017stochastic} for the uniform matroid and $O(n^2)$ complexity for general matroids.

Indeed, the conventional continuous greedy algorithm is able to solve the program in \eqref{eq:multilinear_program}; however, each iteration of the method is computationally costly due to gradient $\nabla F(\bbx)$ evaluations. Instead, \cite{feldman2011unified} and \cite{calinescu2011maximizing} suggested approximating the gradient using a sufficient number of samples from $f$.  This mechanism still requires access to the set function $f$ multiple times at each iteration, and hence is not efficient for solving Problem~\eqref{eq:stochsub}. The idea is then to use a stochastic (unbiased) estimate for the gradient  $\nabla F$.  In the following remark, we provide a method to compute an unbiased estimate of the gradient using $n$ samples from $\tilde{f}(S_i, \bbz)$, where $\bbz \sim P$ and $S_i$'s, $i=1, \cdots, n$, are carefully chosen sets.


\begin{remark}\label{sample_remark}
(Constructing an Unbiased Estimator of the Gradient in Multilinear Extensions) Recall that $f(S)=\mbE_{\bbz\sim P} [\tilde{f}(S, \bbz)]$. In terms of the multilinear extensions, we obtain $F(\bbx) = \mbE_{\bbz\sim P} [\tilde{F}(\bbx, \bbz)]$, where $F$ and $ \tilde{F}$ denote the multilinear extension of $f$ and $\tilde{f}$, respectively. So $\nabla \tilde{F}(\bbx, \bbz)$ is an unbiased estimator of $\nabla F(\bbx)$ when $\bbz\sim P$. Note that finding the gradient of $\tilde{F}$ may not be easy as it contains exponentially many terms. Instead, we can provide computationally cheap unbiased estimators for $\nabla \tilde{F}(\bbx,\bbz)$.
%
%
%
%
It can easily be shown that 
\begin{equation}
\frac{\partial \tilde{F}}{\partial x_i} = \tilde{F}(\bbx,\bbz; x_i \leftarrow 1) - \tilde{F}(\bbx,\bbz; x_i \leftarrow 0).
\end{equation}
where for example by $(\bbx; x_i \leftarrow 1)$ we mean a vector which has value $1$ on its $i$-th coordinate and is equal to $\bbx$ elsewhere. To create an unbiased estimator for $\frac{\partial \tilde{F}}{\partial x_i} $ at a point $\bbx$ with realization $\bbz$ we can simply sample a set $S$ by including each element in it independently with probability $x_i$ and use $\tilde{f}(S \cup \{i\},\bbz) - \tilde{f}(S \setminus \{i\},\bbz)$ as an unbiased estimator for the $i$-th partial derivative of $\tilde{F}$. We can sample one single set $S$ and use the above trick for all the coordinates.  This involves $n$ function computations for $\tilde{f}$. 
\end{remark}

Indeed, the stochastic gradient ascent method proposed by \cite{hassani2017gradient} can be used to solve the multilinear extension problem in \eqref{eq:multilinear_program} using unbiased estimates of the gradient at each iteration. However, the stochastic gradient ascent method fails to achieve the optimal $(1-1/e)$ approximation. Further, the work of  \cite{karimi2017stochastic} achieves a $(1-1/e)$ approximation solution only when each $\tilde{f}(\cdot, \bbz)$ is a coverage function. Here, we show that SCG achieves the first $(1-1/e)$ tight approximation guarantee for the discrete stochastic submodular Problem~\eqref{eq:stochsub}. More precisely, we show  that SCG finds a solution for \eqref{eq:multilinear_program}, with an expected function value that is at least $(1-1/e)\text{OPT} -\epsilon$, in $\mathcal{O}(1/\epsilon^3)$ iterations.   
To do so, we first show in the following lemma  that the difference between any coordinates of gradients of two consecutive iterates generated by SCG, i.e., $\nabla_j F(\bbx_{t+1})-\nabla_j F(\bbx_{t})$ for $j\in\{1,\dots,n\}$, is bounded by $\|\bbx_{t+1}-\bbx_{t}\|$ multiplied by a factor which is independent of the problem dimension $n$.

\begin{lemma}\label{lemma:lip_constant}
Consider \alg (SCG)  outlined in Algorithm~\ref{algo_SCGGA} with iterates $\bbx_t$, and recall the definition of the multilinear extension function $F$ in~\eqref{eq:multilinear_program}. If we define $r$ as the rank of the matroid $\ccalI$ and $m_f \triangleq \max_{i \in \{1, \cdots, n\}} f(i)$, then the following
\begin{align}\label{claim:lip_constant}
\left|\nabla_{j} F(\bbx_{t+1}) -\nabla_{j} F(\bbx_t) \right| \leq m_f\sqrt{r} \|\bbx_{t+1}-\bbx_{t}\|,
\end{align}
holds for $j=1,\dots,n$.
\end{lemma}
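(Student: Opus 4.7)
The plan combines three ingredients to produce a sharp per-coordinate Lipschitz estimate for $\nabla_j F$ along the SCG trajectory.

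First, I would establish a sparsity property of the update. In \alg, $\bbv_t = \arg\max_{\bbv\in\ccalC}\bbd_t^\top\bbv$ is the maximizer of a linear functional over the matroid polytope $\ccalC=\mathrm{conv}\{\mathbf{1}_I:I\in\ccalI\}$. Under the standard convention that such an LP is solved at an extreme point (equivalently, via the greedy oracle for matroids), $\bbv_t=\mathbf{1}_{I_t}$ for some independent set $I_t\in\ccalI$; since $|I_t|\le r$, both $\bbv_t$ and the step $\bbx_{t+1}-\bbx_t=\bbv_t/T$ have at most $r$ nonzero coordinates. This sparsity is the source of the $\sqrt{r}$ factor in the claim.

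Second, I would use the multilinearity of $F$ to express the gradient difference as a line integral. Since $\nabla_j F$ is a multilinear polynomial in $\bbx_{-j}$, the fundamental theorem of calculus gives
$$\nabla_j F(\bbx_{t+1})-\nabla_j F(\bbx_t) \ = \ \int_0^1 \sum_{i\neq j}\partial^2_{ij} F(\bbx_\tau)\,(x_{t+1,i}-x_{t,i})\,d\tau,$$
with $\bbx_\tau=\bbx_t+\tau(\bbx_{t+1}-\bbx_t)$. The mixed partials are bounded using the combinatorial identity $\partial^2_{ij}F(\bbx)=\mathbb{E}_{R\sim\bbx_{-i,-j}}[f(R\cup\{i,j\})-f(R\cup\{i\})-f(R\cup\{j\})+f(R)]$: submodularity of $f$ makes the bracket non-positive, and monotonicity bounds its magnitude by $f(\{j\})\le m_f$. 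Hence $|\partial^2_{ij}F(\bbx)|\le m_f$ at every feasible $\bbx$.

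Combining these bounds yields $|\nabla_j F(\bbx_{t+1})-\nabla_j F(\bbx_t)|\le m_f\sum_{i}|x_{t+1,i}-x_{t,i}| = m_f\|\bbx_{t+1}-\bbx_t\|_1$. A final application of Cauchy--Schwarz on the $r$-sparse vector $\bbx_{t+1}-\bbx_t$ gives $\|\bbx_{t+1}-\bbx_t\|_1\le \sqrt{r}\,\|\bbx_{t+1}-\bbx_t\|_2$, closing the argument. The delicate step is the sparsity in the first ingredient: a generic convex combination of indicators of independent sets in $\ccalC$ need not be $r$-sparse, and without the extreme-point property the $\sqrt{r}$-factor reduction from $\ell_1$ to $\ell_2$ can fail. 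The argument therefore hinges on $\bbv_t$ being a vertex of the matroid polytope, an assumption that should be stated explicitly, and this is the main obstacle to a clean proof.
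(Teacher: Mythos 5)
Your proposal is essentially the paper's own argument: the paper applies the mean value theorem to write $\nabla_j F(\bbx_{t+1})-\nabla_j F(\bbx_t)=\frac{1}{T}\sum_i H_{j,i}(\tbx_t)v_{i,t}$, bounds $|H_{i,j}|\le m_f$, and then uses exactly the extreme-point observation ($\bbv_t=\mathbf{1}_{I}$ with at most $r$ ones) plus Cauchy--Schwarz to convert $\|\bbv_t\|_1$ into $\sqrt{r}\,\|\bbv_t\|_2$; your integral form of Taylor's theorem and your explicit combinatorial derivation of $|\partial^2_{ij}F|\le m_f$ are just slightly more detailed versions of the same steps. The sparsity caveat you flag is also acknowledged verbatim in the paper ("without loss of generality we can assume that the vector $\bbv_t$ is one of the extreme points"), so there is no gap relative to the published proof.
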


\begin{proof}
See Appendix \ref{proof:lemma:lip_constant}.
\end{proof}

The result in Lemma \ref{lemma:lip_constant} states that in an \textit{ascent direction of SCG}, the gradient is $m_f\sqrt{r}$-Lipschitz continuous. 
Here, $m_f$ is the maximum marginal value of the function $f$ and $r$ is the rank of the matroid. 

Let us now explain how the variance of the stochastic gradients of $F$ relates to the variance of the marginal values of $f$. Recall that the stochastic function $\tilde{F}$ is a multilinear extension of the stochastic set function $\tilde{f}$, and it can be shown that
\begin{equation}
\nabla_j \tilde{F}(\bbx,\bbz) = \tilde{F}(\bbx,\bbz; x_j=1) - \tilde{F}(\bbx,\bbz; x_j=0).
\end{equation}
 Hence, from 
submodularity we have $\nabla_j \tilde{F}(\bbx,\bbz) \leq \tilde{f}(\{j\},\bbz)$. Using this simple fact we can deduce that  
\begin{equation}\label{var_bound_multi}
\E{\|  \nabla \tilde{F}(\bbx,\bbz) - \nabla F(\bbx)  \|^2} \leq n\max_{j  \in [n]} \mathbb{E}[ \tilde{f}(\{j\}, \mathbf{z})^2 ].
\end{equation}

Using the result of Lemma~\ref{lemma:lip_constant}, the expression in \eqref{var_bound_multi}, and a coordinate-wise analysis, the bounds in Theorem~\ref{thm:optimal_bound_greedy} can be improved and specified for the case of multilinear extension maximization problem as we show in the following theorem.

\begin{theorem}\label{thm:multi_linear_extenstion_thm}
Consider \alg (SCG)  outlined in Algorithm~\ref{algo_SCGGA}. Recall the definition of the multilinear extension function $F$ in~\eqref{eq:multilinear_program} and the definitions of $r$ and $m_f$ in Lemma \ref{lemma:lip_constant}. Further, set the averaging parameter as $\rho_t=4/(t+8)^{2/3}$.  If Assumption~\ref{ass:bounded_set} holds and the function $f$ is monotone and submodular, then the iterate $\bbx_T$ generated by SCG satisfies the inequality
\begin{align}\label{eq:claim_for_sto_greedy_multi_linear}
\E{F(\bbx_T)} \geq (1-1/e) OPT- \frac{15DK}{T^{1/3}}
{-\frac{m_f{r}D^2}{2T},}
\end{align}
where $K:=\max \{ 2\|\nabla F(\bbx_{0}) - \bbd_{0}\| , 2\sqrt{n}\sqrt{\max_{j  \in [n]} \mathbb{E}[ \tilde{f}(\{j\}, \mathbf{z})^2 ]}+\sqrt{3r}m_f D \}$ and $OPT$ it the optimal value of Problem~\eqref{eq:multilinear_program}.
 \end{theorem}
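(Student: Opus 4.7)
The plan is to specialize the proof of Theorem~\ref{thm:optimal_bound_greedy} to the multilinear extension setting, making two substitutions that account for the structure of the problem. First, the generic variance bound $\sigma^2$ of Assumption~\ref{ass:bounded_variance} is replaced by the sharper quantity $n\max_{j\in[n]}\mathbb{E}[\tilde f(\{j\},\bbz)^2]$ given in~\eqref{var_bound_multi}, which follows directly from the submodular inequality $\nabla_j\tilde F(\bbx,\bbz)\leq \tilde f(\{j\},\bbz)$. Second, the global Lipschitz constant $L$ of $\nabla F$---which for a multilinear extension is not dimension-independent in a useful way---is replaced, along the directions actually taken by SCG, by the coordinate-wise Lipschitz bound $m_f\sqrt{r}$ established in Lemma~\ref{lemma:lip_constant}.

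The first step is to re-derive Lemma~\ref{lemma:bound_on_grad_approx_sublinear} with these two substitutions. The decomposition of $\|\nabla F(\bbx_t)-\bbd_t\|^2$ proceeds exactly as in the original argument, except that (a) the variance term $\rho_t^2\sigma^2$ is replaced by $\rho_t^2\,n\max_j\mathbb{E}[\tilde f(\{j\},\bbz)^2]$, and (b) the Lipschitz-type term $L^2\|\bbx_t-\bbx_{t-1}\|^2/\rho_t$ is replaced by a term of order $r\,m_f^2\,\|\bbx_t-\bbx_{t-1}\|^2/\rho_t$, obtained by summing the squared coordinate-wise Lipschitz bounds from Lemma~\ref{lemma:lip_constant} and using that $\bbx_t-\bbx_{t-1}=\bbv_{t-1}/T$ with $\|\bbv_{t-1}\|_2\leq D$. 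Running the same induction as in the proof of Lemma~\ref{lemma:bound_on_grad_approx_sublinear} with $\rho_t=4/(t+8)^{2/3}$ then yields $\mathbb{E}[\|\nabla F(\bbx_t)-\bbd_t\|^2]\leq K^2/(t+9)^{2/3}$ for the constant $K$ stated in the theorem.

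The second step is to prove the appropriate ascent inequality per iteration. Using the integral form
\begin{align*}
F(\bbx_{t+1})-F(\bbx_t)-\tfrac{1}{T}\langle\nabla F(\bbx_t),\bbv_t\rangle
= \sum_{j=1}^n (x_{t+1,j}-x_{t,j})\int_0^1\!\bigl[\nabla_j F(\bbx_t+s(\bbx_{t+1}-\bbx_t))-\nabla_j F(\bbx_t)\bigr]\,ds,
\end{align*}
and applying Lemma~\ref{lemma:lip_constant} to each integrand, the right-hand side is bounded below by $-\tfrac{m_f\sqrt{r}}{2}\,\|\bbx_{t+1}-\bbx_t\|_1\|\bbx_{t+1}-\bbx_t\|_2$. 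Since $\bbv_t$ is a vertex of the matroid polytope it has support at most $r$, hence $\|\bbv_t\|_1\leq\sqrt{r}\,\|\bbv_t\|_2\leq\sqrt{r}\,D$; combining these gives a second-order error of at most $m_f r D^2/(2T^2)$ per iteration. This is precisely the refinement that replaces the $LD^2/(2T^2)$ curvature term in the proof of Theorem~\ref{thm:optimal_bound_greedy}.

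The third step is the same telescoping/induction argument as in the proof of Theorem~\ref{thm:optimal_bound_greedy}: using monotonicity and DR-submodularity of $F$ together with the per-iteration ascent inequality produces a recursion for $\mathrm{OPT}-\mathbb{E}[F(\bbx_t)]$ whose leading factor gives the $(1-1/e)\mathrm{OPT}$ term, the first-order error (involving $\mathbb{E}\|\nabla F(\bbx_t)-\bbd_t\|$) is controlled by Step~1 via Jensen's inequality and gives the $15DK/T^{1/3}$ term, and the second-order error from Step~2 telescopes to $m_f r D^2/(2T)$. I expect the main obstacle to be the bookkeeping in Step~2: a naive bound $\|\bbv_t\|_1\leq\sqrt{n}\,\|\bbv_t\|_2$ would introduce a spurious $\sqrt{n}$ factor, and avoiding this requires the support-size argument specific to matroid-polytope vertices. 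Once that is in place, the rest is a direct transcription of the argument already used for Theorem~\ref{thm:optimal_bound_greedy}.
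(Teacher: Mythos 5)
Your proposal matches the paper's proof in all essentials: the paper likewise specializes the argument of Theorem~\ref{thm:optimal_bound_greedy} by (i) substituting the variance bound \eqref{var_bound_multi}, (ii) re-running the gradient-error lemma with $L$ replaced by $m_f\sqrt{r}$ (its Lemma~\ref{lemma:sub_grad_error}), and (iii) replacing the smoothness penalty by a second-order term bounded by $m_f r D^2/(2T^2)$ using the sparsity of the matroid-polytope vertex $\bbv_t$, before telescoping exactly as before. The only difference is cosmetic: where you use the integral form of the Taylor remainder together with Lemma~\ref{lemma:lip_constant} and the $\ell_1$--$\ell_2$ inequality for an $r$-sparse vector, the paper bounds the Hessian quadratic form directly via the entrywise bound $|H_{j,i}(\tilde{\bbx}_t)|\le m_f$ and the identity $\left(\sum_{i} v_{i,t}\right)^2 = r\|\bbv_t\|^2$, arriving at the same per-iteration constant.
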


\begin{proof}
The proof is similar to the proof of Theorem~\ref{thm:optimal_bound_greedy}. For more details, check Appendix~\ref{proof:thm:multi_linear_extenstion_thm}.\end{proof}

The result of Theorem \ref{thm:multi_linear_extenstion_thm} indicates that the sequence of iterates generated by SCG achieves a $(1-1/e) \text{OPT} - \eps$ approximation guarantee. Note that the constants on the right hand side of \eqref{eq:claim_for_sto_greedy_multi_linear} are independent of $n$, except $K$ that is at most proportional to $\sqrt{n}$. As a result, we have the following guarantee for SCG in the case of multilinear functions.

\begin{corollary}\label{main_corollary}
Consider \alg (SCG)  outlined in Algorithm~\ref{algo_SCGGA}. Suppose the conditions in Theorem \ref{thm:multi_linear_extenstion_thm} are satisfied. Then, the sequence of iterates generated by SCG achieves a $(1-1/e)OPT - \epsilon$ solution after $\mathcal{O}({n^{3/2}}/{\eps^3})$ iterations. As a consequence, maximizing a stochastic Submodular set function with SCG requires $\mathcal{O}({n^{5/2}}/{\eps^3})$ evaluations of the function $\tilde{f}$ in order to reach a $(1-1/e)OPT -\epsilon$ solution. 
\end{corollary}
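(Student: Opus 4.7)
\textbf{Proof proposal for Corollary~\ref{main_corollary}.} The plan is to plug Theorem~\ref{thm:multi_linear_extenstion_thm} into the desired accuracy $\eps$ and carefully track the dependence on the ground set size $n$. First, I would rewrite the bound
\[
\E{F(\bbx_T)} \;\geq\; (1-1/e)\,\mathrm{OPT} \;-\; \frac{15DK}{T^{1/3}} \;-\; \frac{m_f\, r\, D^2}{2T},
\]
and observe that to obtain a $(1-1/e)\mathrm{OPT}-\eps$ solution it suffices to choose $T$ so that each of the two error terms is at most $\eps/2$.

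Next I would inspect how $K$ depends on $n$. Recalling the definition
\[
K \;=\; \max\!\left\{ 2\|\nabla F(\bbx_{0}) - \bbd_{0}\|, \;\; 2\sqrt{n}\sqrt{\max_{j \in [n]} \mathbb{E}[ \tilde{f}(\{j\}, \bbz)^2 ]} + \sqrt{3r}\, m_f D \right\},
\]
and treating the problem-dependent quantities $D$, $m_f$, $r$, and $\max_j \E{\tilde f(\{j\},\bbz)^2}$ as constants in $n$ (they all involve only marginals of $f$ or the matroid geometry, not $n$ directly), the dominant behavior is $K = \mathcal{O}(\sqrt{n})$ coming from the $\sqrt{n}$ factor in the variance term. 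Therefore the first error term satisfies $15DK/T^{1/3} = \mathcal{O}(\sqrt{n}/T^{1/3})$, and setting this equal to $\eps/2$ yields the threshold $T = \Omega(n^{3/2}/\eps^3)$. The second error term is $\mathcal{O}(1/T)$, which is of smaller order and is automatically dominated once $T$ meets the first requirement. Hence $T = \mathcal{O}(n^{3/2}/\eps^3)$ iterations suffice to guarantee $\E{F(\bbx_T)} \geq (1-1/e)\mathrm{OPT} - \eps$, and rounding the resulting fractional $\bbx_T$ via randomized pipage rounding yields, in expectation, a feasible set $S$ with $\E{f(S)} \geq (1-1/e)\mathrm{OPT} - \eps$.

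For the second claim about the number of evaluations of $\tilde{f}$, I would invoke Remark~\ref{sample_remark}: at each iteration of SCG the only access to the stochastic oracle is through computing a single stochastic gradient $\nabla \tilde F(\bbx_t,\bbz_t)$, and the construction described there produces an unbiased estimator of $\nabla F(\bbx_t)$ using one sample $\bbz_t \sim P$ together with one sampled set $S \subseteq V$, evaluating the differences $\tilde f(S\cup\{i\},\bbz_t) - \tilde f(S\setminus\{i\},\bbz_t)$ for each of the $n$ coordinates. This amounts to $\mathcal{O}(n)$ evaluations of $\tilde f$ per iteration. Multiplying by the iteration count $T = \mathcal{O}(n^{3/2}/\eps^3)$ yields $\mathcal{O}(n^{5/2}/\eps^3)$ total evaluations, which is the claimed bound.

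The main (and only mildly delicate) step is the $n$-scaling bookkeeping in the definition of $K$: one must verify that the only factor that genuinely grows with $n$ is the $\sqrt{n}$ coming from the bound in~\eqref{var_bound_multi}, and that the initialization error $\|\nabla F(\bbx_0)-\bbd_0\|$, together with the geometric and marginal constants, do not secretly introduce additional $n$-dependence. Everything else is a direct substitution into Theorem~\ref{thm:multi_linear_extenstion_thm} and an application of Remark~\ref{sample_remark}.
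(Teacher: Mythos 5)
Your proposal is correct and follows essentially the same route as the paper: both substitute the bound of Theorem~\ref{thm:multi_linear_extenstion_thm}, observe that $K=\mathcal{O}(\sqrt{n})$ so the dominant error term is $\mathcal{O}(n^{1/2}/T^{1/3})$, solve for $T=\mathcal{O}(n^{3/2}/\eps^3)$, and then invoke Remark~\ref{sample_remark} to count $n$ evaluations of $\tilde{f}$ per iteration. Your write-up merely makes the $n$-dependence bookkeeping slightly more explicit than the paper's.
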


\begin{proof}
According to the result in Theorem \ref{thm:multi_linear_extenstion_thm}, SCG reaches a $(1-1/e)OPT-\mathcal{O}(n^{1/2}/T^{1/3})$ solution after $T$ iterations. Therefore, to achieve a $((1-1/e)OPT-\epsilon)$ approximation, $\mathcal{O}(n^{3/2}/\eps^{3})$ iterations are required. Since each iteration requires access to an unbiased estimator of the gradient $\nabla F(\bbx)$ and it can be computed by $n$ samples from $\tilde{f}(S_i, \bbz)$ (Remark~\ref{sample_remark}), then the total number of calls to the function $\tilde{f}$ to reach a $(1-1/e)OPT-\epsilon$ solution is of order $\mathcal{O}({n}^{5/2}/{\eps^3})$ for the SCG method.
\end{proof}

The result in Corollary \ref{main_corollary} shows that after at most $\mathcal{O}({n^{5/2}}/{\eps^3})$ function evaluations of the stochastic set function $\tilde{f}$ the iterates generated by SCG achieves a continuous solution $\bbx_t$ with an objective function value that satisfies $\E{F(\bbx_t)}\geq (1-1/e)OPT-\eps$ where $OPT$ is the optimal objective function value of Problem~\eqref{eq:multilinear_program}. Further, by using a lossless rounding scheme we can obtain a discrete set $\ccalS^\dag$ such that  $\E{f(S^\dag)}\geq (1-1/e)\max_{S\in \ccalI}f(S)-\eps$.

Indeed, by following similar steps we can extend the result for NMSCG to the discrete submodular maximization problem when the objective function is non-monotone and stochastic. We formally prove this claim in the following theorem. 

\begin{theorem}\label{thm:multi_linear_extenstion_thm_non_monotone_case}
Consider \algNM (NMSCG)  outlined in Algorithm~\ref{algo_NMSCG}. Recall the definition of the multilinear extension function $F$ in~\eqref{eq:multilinear_program} and the definitions of $r$ and $m_f$ in Lemma \ref{lemma:lip_constant}. Further, set the averaging parameter as $\rho_t=4/(t+8)^{2/3}$.  If Assumption~\ref{ass:bounded_set} holds and the function $f$ is non-monotone and submodular, then the iterate $\bbx_T$ generated by SCG satisfies the inequality
\begin{align}\label{eq:claim_for_sto_greedy_multi_linear_non_monotone_case}
\E{F(\bbx_T)} \geq (1/e) OPT- {\frac{15DK}{T^{1/3}}
 -\frac{m_f{r}D^2}{2T},}
\end{align}
where $K:=\max \{ 2 \|\nabla F(\bbx_{0}) - \bbd_{0}\|  , 2\sqrt{n}\sqrt{\max_{j  \in [n]} \mathbb{E}[ \tilde{f}(\{j\}, \mathbf{z})^2 ]}+\sqrt{3r}m_f D \}$ and $OPT$ it the optimal value of Problem~\eqref{eq:multilinear_program}.
\end{theorem}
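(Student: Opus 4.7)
The plan is to follow the same reduction used in the proof of Theorem~\ref{thm:multi_linear_extenstion_thm}, but start from the non-monotone continuous guarantee of Theorem~\ref{thm:non_monotone_bound} rather than the monotone guarantee of Theorem~\ref{thm:optimal_bound_greedy}. First, I would check that all hypotheses of Theorem~\ref{thm:non_monotone_bound} are met in the multilinear setting: the multilinear extension $F$ of a submodular $f$ is continuous DR-submodular on $[0,1]^n$ (which plays the role of the box $\ccalX$), the matroid polytope $\ccalC=\operatorname{conv}\{\mathbf 1_I:I\in\ccalI\}$ is down-closed and contains the origin (so the initialization $\bbx_0=\bb0$ is feasible), and the bounded-set Assumption~\ref{ass:bounded_set} holds with $D\le\sqrt{r}$ since every vertex of $\ccalC$ is the indicator of an independent set.

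Second, I would track the two problem-dependent constants that govern the rate in Theorem~\ref{thm:non_monotone_bound}, namely the gradient Lipschitz constant $L$ and the stochastic-gradient variance bound $\sigma^2$, and replace them by sharper, multilinear-specific quantities exactly as was done in Theorem~\ref{thm:multi_linear_extenstion_thm}. The variance bound $\sigma^2$ is replaced by the dimension-aware estimate
\begin{equation*}
\E{\|\nabla\tilde F(\bbx,\bbz)-\nabla F(\bbx)\|^2}\le n\max_{j\in[n]}\E{\tilde f(\{j\},\bbz)^2},
\end{equation*}
which is derived from the standard identity $\nabla_j\tilde F(\bbx,\bbz)=\tilde F(\bbx,\bbz;x_j{=}1)-\tilde F(\bbx,\bbz;x_j{=}0)$ together with submodularity (inequality~\eqref{var_bound_multi}). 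The Lipschitz constant $L$ is replaced by the per-coordinate bound $m_f\sqrt{r}$ from Lemma~\ref{lemma:lip_constant}; since the ascent direction $\bbv_t$ is a vertex of the matroid polytope we have $\|\bbv_t\|\le\sqrt r$, so any smoothness-like step inequality used in the proof of Theorem~\ref{thm:non_monotone_bound} (specifically the descent lemma that produced the $LD^2/(2T)$ term and the $L^2 D^2$ contribution inside the gradient-error recursion of Lemma~\ref{lemma:bound_on_grad_approx_sublinear}) can be re-derived using the coordinate-wise Lipschitz estimate, replacing $L$ by $m_f\sqrt r$ in the error recursion and $LD^2$ by $m_f r D^2$ in the one-step progress bound.

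Third, substituting these refined constants into the conclusion of Theorem~\ref{thm:non_monotone_bound} gives
\begin{equation*}
\E{F(\bbx_T)}\ \ge\ e^{-1}F(\bbx^*)\ -\ \frac{15 D K}{T^{1/3}}\ -\ \frac{m_f r D^2}{2T},
\end{equation*}
with $K=\max\{2\|\nabla F(\bbx_0)-\bbd_0\|,\,2\sqrt n\sqrt{\max_j\E{\tilde f(\{j\},\bbz)^2}}+\sqrt{3r}\,m_f D\}$, which is exactly~\eqref{eq:claim_for_sto_greedy_multi_linear_non_monotone_case}. Since $F(\bbx^*)=\mathrm{OPT}$ for the relaxation~\eqref{eq:multilinear_program}, this is the claim.

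The main obstacle is the second step: one has to be careful that every place in the proof of Theorem~\ref{thm:non_monotone_bound} where the global Lipschitz constant $L$ entered is in fact applied only along the low-dimensional direction $\bbv_t$ (or to $\bbx_{t+1}-\bbx_t=\bbv_t/T$), so that Lemma~\ref{lemma:lip_constant}'s coordinate-wise bound $|\nabla_j F(\bbx_{t+1})-\nabla_j F(\bbx_t)|\le m_f\sqrt r\,\|\bbx_{t+1}-\bbx_t\|$ is all that is needed; otherwise one would be forced to use the much weaker full-Hessian bound of order $n m_f$, which would destroy the polynomial-in-$n$ sample complexity. Once this coordinate-wise book-keeping is done, the remainder of the argument is a direct transcription of the proof of Theorem~\ref{thm:multi_linear_extenstion_thm} with Theorem~\ref{thm:non_monotone_bound} replacing Theorem~\ref{thm:optimal_bound_greedy} as the underlying black-box rate.
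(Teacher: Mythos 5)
Your proposal matches the paper's proof: the paper likewise re-runs the one-step progress bound of Theorem~\ref{thm:non_monotone_bound} (the $\bbx_t\vee\bbx^*$ argument and the bound $F(\bbx_t\vee\bbx^*)\geq(1-1/T)^tF(\bbx^*)$) after replacing the $L$-smoothness descent step by the Hessian bound $\langle\bbv_t,\bbH(\tbx_t)\bbv_t\rangle\geq-m_f r\|\bbv_t\|^2$ from the proof of Theorem~\ref{thm:multi_linear_extenstion_thm}, and replacing the gradient-error bound by Lemma~\ref{lemma:sub_grad_error} (which, as you anticipate, uses the coordinate-wise Lipschitz estimate of Lemma~\ref{lemma:lip_constant} and the variance bound~\eqref{var_bound_multi}). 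Your "main obstacle" paragraph correctly identifies the only delicate point, and it is handled in the paper exactly as you describe.
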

 
\begin{proof}
The proof is similar to the proof of Theorem~\ref{thm:non_monotone_bound}. For more details, check Appendix~\ref{proof:thm:multi_linear_extenstion_thm_non_monotone_case}.
\end{proof}

\begin{corollary}\label{main_corollary_nonmonotone}
Consider \algNM (NMSCG)  outlined in Algorithm~\ref{algo_NMSCG}. Suppose the conditions in Theorem \ref{thm:multi_linear_extenstion_thm_non_monotone_case} are satisfied. Then, the sequence of iterates generated by NMSCG achieves a $(1/e)OPT - \epsilon$ solution after $\mathcal{O}({n^{3/2}}/{\eps^3})$ iterations. As a consequence, maximizing a stochastic Submodular set function with NMSCG requires $\mathcal{O}({n^{5/2}}/{\eps^3})$ evaluations of the function $\tilde{f}$ in order to reach a $(1/e)OPT -\epsilon$ solution. 
\end{corollary}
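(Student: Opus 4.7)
The plan is to mirror the proof of Corollary \ref{main_corollary} almost verbatim, substituting the monotone bound for its non-monotone analogue from Theorem \ref{thm:multi_linear_extenstion_thm_non_monotone_case}. The argument is essentially bookkeeping on the error terms in \eqref{eq:claim_for_sto_greedy_multi_linear_non_monotone_case} combined with the per-iteration sample cost identified in Remark \ref{sample_remark}; there is no conceptually new ingredient.

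First, I would apply Theorem \ref{thm:multi_linear_extenstion_thm_non_monotone_case} directly and examine the two error terms on the right-hand side. The dominant (slowest decaying) term is $15DK/T^{1/3}$, while $m_f r D^2/(2T)$ decays strictly faster in $T$. Next, I would inspect the definition of $K$: since $D$, $r$, $m_f$, and $\|\nabla F(\bbx_0)-\bbd_0\|$ are quantities independent of $n$ (or at most polynomial in $r$, not in $n$), the only piece of $K$ that grows with the ground set size is $2\sqrt{n}\sqrt{\max_{j\in[n]}\E{\tilde{f}(\{j\},\bbz)^2}}$. Hence $K=\mathcal{O}(\sqrt{n})$, and the leading error term behaves like $\mathcal{O}(\sqrt{n}/T^{1/3})$.

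Demanding $\mathcal{O}(\sqrt{n}/T^{1/3})\leq \epsilon$ then gives $T=\mathcal{O}(n^{3/2}/\epsilon^3)$, which absorbs the faster-decaying $1/T$ term for all $\epsilon$ small enough. This settles the iteration complexity half of the claim. For the sample complexity, I would invoke Remark \ref{sample_remark}: an unbiased estimator of $\nabla\tilde{F}(\bbx,\bbz)$ can be produced at the cost of $n$ calls to $\tilde{f}$ (one sampled set used across all $n$ coordinates). Multiplying the $\mathcal{O}(n^{3/2}/\epsilon^3)$ iteration bound by $n$ function evaluations per iteration yields the advertised $\mathcal{O}(n^{5/2}/\epsilon^3)$ total oracle calls.

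I do not expect any real obstacle; the only thing worth a sentence of care is making sure the non-monotone guarantee \eqref{eq:claim_for_sto_greedy_multi_linear_non_monotone_case} has the \emph{same} $\sqrt{n}$-scaling of $K$ as in the monotone Theorem \ref{thm:multi_linear_extenstion_thm}, so that the iteration count has the same dependence on $n$ and $\epsilon$ (only the leading factor changes from $1-1/e$ to $1/e$). Inspecting both statements confirms this is the case, so the corollary follows by the same two-line argument used for Corollary \ref{main_corollary}.
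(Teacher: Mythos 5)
Your argument is correct and is essentially identical to the paper's treatment: the paper proves the monotone Corollary~\ref{main_corollary} by exactly this bookkeeping (error term $\mathcal{O}(n^{1/2}/T^{1/3})$ from the $\sqrt{n}$-scaling of $K$, hence $T=\mathcal{O}(n^{3/2}/\eps^3)$, times $n$ function evaluations per iteration via Remark~\ref{sample_remark}), and states the non-monotone corollary without further proof since the same two-line argument applies verbatim. No gaps.
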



\subsection{Convergence Bounds Based on Curvature}
For the continuous greedy method it has been shown that if the submodular function $f$ has a curvature $c\in[0,1]$ the algorithm reaches a $(1/c)(1-e^{-c})$ approximation guarantee. In this section, we show that the same improvement can be established for SCG in the stochastic setting. To do so, we first formally define the curvature $c$ of a monotone submodular function $f$ as
\begin{equation}\label{eq:curvature_def}
c:= 1- \min_{S, j\notin S} \frac{f(S\cup\{j\}) - f(S)}{f(\{j\})}.
\end{equation}
Indeed, smaller curvature value $c$ leads to an easier submodular maximization problem, and, in this case, we should be able to achieve a tighter approximate solution. In the following theorem, we match this expectation and show that if $c<1$ the bound in \eqref{eq:claim_for_sto_greedy} can be improved. 

\begin{theorem}\label{thm:optimal_bound_curvature_greedy}
Consider the proposed \alg (SCG) defined in \eqref{eq:von1}-\eqref{eq:von3}. Further recall the definition of the function $f$ curvature $c$ in \eqref{eq:curvature_def}. If Assumption~\ref{ass:bounded_set} is satisfied and the function $f$ is monotone and submodular, then the expected objective function value for the iterate $\bbx_T$ generated by SCG satisfies the inequality
\begin{align}\label{eq:claim_for_sto_greedy_curvature}
\E{F(\bbx_T)} \geq \frac{1}{c}(1-e^{-c}) OPT -  \frac{15DK}{T^{1/3}}  - \frac{m_frD^2 }{2T},
\end{align}
where $K:=\max \{ 2 \|\nabla F(\bbx_{0}) - \bbd_{0}\|  , 2\sqrt{n}\sqrt{\max_{j  \in [n]} \mathbb{E}[ \tilde{f}(\{j\}, \mathbf{z})^2 ]}+\sqrt{3r}m_f D \}$.
\end{theorem}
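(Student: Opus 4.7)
The proof of Theorem~\ref{thm:optimal_bound_curvature_greedy} will parallel that of Theorem~\ref{thm:multi_linear_extenstion_thm} with one crucial inequality sharpened using the curvature $c$. The key new ingredient is the \emph{curvature-refined gradient inequality} for the multilinear extension $F$,
\begin{equation}\label{eq:curv_key}
\langle \bbx^*, \nabla F(\bbx)\rangle \geq F(\bbx^*) - c\, F(\bbx),
\end{equation}
which at $c=1$ reduces to the standard submodular inequality already used for Theorem~\ref{thm:multi_linear_extenstion_thm}, and at $c=0$ (so $f$ is modular) reduces to $\langle \bbx^*,\nabla F(\bbx)\rangle\geq F(\bbx^*)$, recovering optimality. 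I plan to establish~\eqref{eq:curv_key} via the Conforti--Cornu\'ejols decomposition $f=(1-c)\ell+h$, with $\ell(S)=\sum_{i\in S}f(\{i\})$ modular and $h=f-(1-c)\ell$. The definition of curvature gives $h(S\cup\{j\})-h(S)\geq 0$, so $h$ is monotone; since $f$ and $\ell$ are both submodular, so is $h$. Letting $L$ and $H$ denote the corresponding multilinear extensions, we have $F=(1-c)L+H$ and $\nabla F=(1-c)\nabla L+\nabla H$. Applying the standard monotone submodular inequality to $H$ (which gives $\langle \bbx^*,\nabla H(\bbx)\rangle\geq H(\bbx^*)-H(\bbx)$) and using $L(\bbx)\geq F(\bbx)$ (from subadditivity of $f$, since $\mathbb{E}_{R\sim\bbx}[f(R)]\leq \mathbb{E}_{R\sim\bbx}[\sum_{j\in R}f(\{j\})]$), a short calculation yields~\eqref{eq:curv_key}.

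With \eqref{eq:curv_key} in hand, the rest mirrors the proof of Theorem~\ref{thm:multi_linear_extenstion_thm}. Using smoothness with Lipschitz constant $m_f\sqrt r$ from Lemma~\ref{lemma:lip_constant}, the update $\bbx_{t+1}=\bbx_t+\bbv_t/T$, and optimality of $\bbv_t$ for $\bbd_t$, one obtains
\begin{equation*}
F(\bbx_{t+1})-F(\bbx_t)\;\geq\; \tfrac{1}{T}\langle \bbx^*,\nabla F(\bbx_t)\rangle - \tfrac{2D}{T}\|\bbd_t-\nabla F(\bbx_t)\| - \tfrac{m_f r D^2}{2T^2}.
\end{equation*}
Substituting~\eqref{eq:curv_key} and introducing the Lyapunov variable $\phi_t\doteq F(\bbx^*)-c\,F(\bbx_t)$ (so that $F(\bbx_{t+1})-F(\bbx_t)=(\phi_t-\phi_{t+1})/c$) converts this into the linear recursion
\begin{equation*}
\phi_{t+1}\;\leq\; \left(1-\tfrac{c}{T}\right)\phi_t + \tfrac{2cD}{T}\|\bbd_t-\nabla F(\bbx_t)\| + \tfrac{c\,m_f r D^2}{2T^2}.
\end{equation*}
Telescoping from $t=0$ (with $\bbx_0=\bb0$, so $\phi_0=F(\bbx^*)$) and using $(1-c/T)^T\leq e^{-c}$ yields $\phi_T\leq e^{-c}F(\bbx^*)+c\sum_{t=0}^{T-1}\bigl(\tfrac{2D}{T}\|\bbd_t-\nabla F(\bbx_t)\|+\tfrac{m_f r D^2}{2T^2}\bigr)$. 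Taking expectations and invoking the multilinear-adapted version of Lemma~\ref{lemma:bound_on_grad_approx_sublinear}, with the variance bound~\eqref{var_bound_multi} and Lipschitz constant $m_f\sqrt r$ replacing $\sigma^2$ and $L$, converts the summed noise into an $\mathcal{O}(T^{-1/3})$ error, exactly as in the proof of Theorem~\ref{thm:multi_linear_extenstion_thm}. Rearranging $\phi_T=F(\bbx^*)-cF(\bbx_T)$ then produces~\eqref{eq:claim_for_sto_greedy_curvature}.

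The main obstacle is~\eqref{eq:curv_key}: the Conforti--Cornu\'ejols decomposition is straightforward, but one must carefully verify that $h$ is simultaneously monotone and submodular (so that the standard gradient inequality applies to $H$), and that $L(\bbx)\geq F(\bbx)$ (so that the linear part can be absorbed to produce the coefficient $c$ in front of $F(\bbx)$ rather than $1$). Once this is done, the remaining bookkeeping is mechanical: the only change relative to Theorem~\ref{thm:multi_linear_extenstion_thm} is the replacement of the factor $(1-1/T)$ in the telescoping recursion by $(1-c/T)$, which arises directly from the curvature-refined bound and is precisely what turns the $(1-1/e)$ approximation into $(1-e^{-c})/c$.
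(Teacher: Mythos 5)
Your proof is correct and follows essentially the same architecture as the paper's: a curvature-refined gradient inequality, the $m_f\sqrt{r}$-smoothness descent step from Lemma~\ref{lemma:lip_constant}, a recursion with contraction factor $(1-c/T)$, and the bound $(1-c/T)^T\leq e^{-c}$ combined with the gradient-error rate of the multilinear analogue of Lemma~\ref{lemma:bound_on_grad_approx_sublinear}. The one genuine difference is at the key inequality $\langle \bbx^*,\nabla F(\bbx)\rangle\geq F(\bbx^*)-cF(\bbx)$: the paper simply cites Lemma~3 of \citep{hassani2017gradient} (in the equivalent form $\max_{\bbv\in\ccalC}\bbv^T\nabla F(\bbx_t)\geq F(\bbx^*)-cF(\bbx_t)$, which your version implies since $\bbx^*\in\ccalC$ and $\nabla F\geq 0$), whereas you derive it from scratch via the Conforti--Cornu\'ejols decomposition $f=(1-c)\ell+h$. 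Your derivation is sound --- $h$ is indeed monotone by the definition of curvature and submodular as a submodular-minus-modular function, $\langle\bbx^*,\nabla L\rangle=L(\bbx^*)$ since $L$ is linear, and $L(\bbx)\geq F(\bbx)$ by subadditivity with $1-c\geq 0$ --- so you gain a self-contained argument at the cost of a page of verification the paper outsources. Your Lyapunov variable $\phi_t=F(\bbx^*)-cF(\bbx_t)$ is an algebraically equivalent repackaging of the paper's recursion on $\E{F(\bbx_t)}-F^*$; the only nitpick is that $\phi_0=F(\bbx^*)-cF(\bb0)$ rather than $F(\bbx^*)$ exactly, but since $F(\bb0)=f(\emptyset)\geq 0$ you still get $\phi_0\leq F(\bbx^*)$, which is the direction the telescoping needs (the paper invokes the same $F(\bbx_0)\geq 0$ fact).
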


\begin{proof}
Check Appendix~\ref{proof:thm:optimal_bound_curvature_greedy}.
\end{proof}

\begin{corollary}\label{main_corollary}
Consider \alg (SCG)  outlined in Algorithm~\ref{algo_SCGGA}. Suppose the conditions in Theorem \ref{thm:optimal_bound_curvature_greedy} are satisfied. Then, the sequence of iterates generated by SCG achieves a $((1-e^{-c})/c)OPT - \epsilon$ solution after $\mathcal{O}({n^{3/2}}/{\eps^3})$ iterations. As a consequence, maximizing a stochastic submodular set function with SCG requires $\mathcal{O}({n^{5/2}}/{\eps^3})$ function evaluations.
\end{corollary}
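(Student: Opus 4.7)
The plan is to derive the corollary as a direct consequence of the bound \eqref{eq:claim_for_sto_greedy_curvature} in Theorem~\ref{thm:optimal_bound_curvature_greedy}, by inverting the error term with respect to $T$ and then converting iteration count into function-evaluation count using the sampling scheme of Remark~\ref{sample_remark}.

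First I would isolate the $n$-dependence of the bound. Theorem~\ref{thm:optimal_bound_curvature_greedy} gives
\begin{equation*}
\E{F(\bbx_T)} \geq \tfrac{1}{c}(1-e^{-c})\,\mathrm{OPT} - \tfrac{15DK}{T^{1/3}} - \tfrac{m_f r D^2}{2T},
\end{equation*}
where $K = \max\{\,2\|\nabla F(\bbx_0)-\bbd_0\|,\ 2\sqrt{n}\sqrt{\max_j \mbE[\tilde{f}(\{j\},\bbz)^2]} + \sqrt{3r}\,m_f D\,\}$. Since $r,m_f,D$ and the marginal variance $\max_j \mbE[\tilde f(\{j\},\bbz)^2]$ are all $\mathcal{O}(1)$ in $n$, we have $K = \mathcal{O}(\sqrt{n})$; the two error terms therefore scale as $\mathcal{O}(\sqrt{n}/T^{1/3})$ and $\mathcal{O}(1/T)$ respectively, with the first being dominant for large $T$.

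Next I would choose $T$ large enough to drive both error terms below $\eps$. Requiring $\frac{15DK}{T^{1/3}} \le \eps/2$ yields $T \ge (30DK/\eps)^3 = \mathcal{O}(n^{3/2}/\eps^3)$, and this automatically implies $\frac{m_f r D^2}{2T} \le \eps/2$ for all sufficiently small $\eps$. Hence after $T = \mathcal{O}(n^{3/2}/\eps^3)$ iterations of SCG we have $\E{F(\bbx_T)} \ge \tfrac{1}{c}(1-e^{-c})\,\mathrm{OPT} - \eps$, which is the first claim.

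Finally, I would convert iterations into stochastic oracle calls. By Remark~\ref{sample_remark}, each iteration of SCG requires computing one unbiased stochastic gradient $\nabla \tilde F(\bbx_t,\bbz_t)$ of the multilinear extension, and this can be done using $n$ evaluations of $\tilde f$ (one sample $\bbz_t \sim P$ combined with a single random set $S$ sampled coordinate-wise with probabilities $\bbx_t$, used to evaluate all $n$ partial differences $\tilde f(S\cup\{i\},\bbz_t)-\tilde f(S\setminus\{i\},\bbz_t)$). Multiplying the $\mathcal{O}(n^{3/2}/\eps^3)$ iteration count by the $\mathcal{O}(n)$ evaluations per iteration yields the advertised total of $\mathcal{O}(n^{5/2}/\eps^3)$ evaluations of $\tilde f$.

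No step presents a serious obstacle; the only care needed is in tracking that $K$ scales as $\sqrt{n}$ while the other constants in the bound are $n$-independent, so that the dominant $T^{-1/3}$ error term is what sets the iteration complexity. The $1/T$ term (arising from the smoothness-like constant $m_f r$) is always lower-order and does not affect the final scaling.
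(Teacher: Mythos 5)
Your argument is correct and matches the paper's own (the paper proves the analogous monotone corollary by exactly this route and leaves the curvature case implicit): invoke the bound of Theorem~\ref{thm:optimal_bound_curvature_greedy}, note that $K=\mathcal{O}(\sqrt{n})$ so the dominant error term forces $T=\mathcal{O}(n^{3/2}/\eps^3)$, and multiply by the $n$ function evaluations per stochastic gradient from Remark~\ref{sample_remark}. No issues.
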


\section{Numerical Experiments}\label{sec:experiments}

In this section, we compare the performances of the proposed stochastic conditional gradient method with state-of-the-art algorithms in both convex and submodular settings.

\subsection{Convex Setting}

We first compare the proposed SFW algorithm and mini-batch FW for a stochastic quadratic program of the form \eqref{eq:convex_problem}. Then, we compare their performances in solving a matrix completion problem. In this section, by mini-batch FW we refer to a variant of FW that simply replaces gradients by a mini-batch of stochastic gradients.
\vspace{2mm}
 
\textbf{Quadratic Programming.} Consider a positive definite matrix $\bbA\in \mathbb{S}_{++}^n$, a vector $\bbb\in \reals^n$, a random variable $\bbz\in \reals^n$, and the random diagonal matrix $diag(\bbz)\in \reals^{n\times n}$ defined by $\bbz$. The function $F$ is defined as 
\begin{equation}\label{test_example_1}
F(\bbx)= \E{\tilde{F}(\bbx,\bbz)} =  \E{  \frac{1}{2}\bbx^T (\bbA+\text{diag}(\bbz)) \bbx + (\bbb+\bbz)^T\bbx}.
\end{equation}
We assume that each element of $\bbz$ is sampled from a normal distribution $\ccalN(0,\sigma)$. Therefore, the objective function can be simplified to $F(\bbx)=   \frac{1}{2}\bbx^T \bbA \bbx + \bbb^T\bbx$. Further, we assume that the set $\ccalC$ is defined as $\ccalC=\{ \bbx\in \reals^n \mid l\leq x_i \leq u\}$. Here, we assume that the distribution is unknown to the algorithm and at each iteration we only have access to the stochastic gradients $\nabla \tilde{F}(\bbx,\bbz)=  (\bbA+\text{diag}(\bbz)) \bbx + \bbb+\bbz$. 

In our experiments, we set the dimension of the problem to $n=5$ and the lower bound and upper bounds for the set $\ccalC$ to $l=10$ and $u=100$. We construct $\bbA$ and $\bbb$ in such a way that the optimal solution of the unconstrained set, namely $-\bbA^{-1}\bbb$, does not belong to the set $\ccalC$. 

Figure \ref{fig_cvx} demonstrates the suboptimality gap $F(\bbx_T)-F(\bbx^*)$ for the iterates generated by the proposed SFW method (with batch size $b=1$) as well as the naive stochastic implementation of FW with batch sizes $b=\{1,10,50\} $ for the cases that $T=\{100,200,400,800,1600,3200,6400,12800\}$. We further illustrates the performance of the (deterministic) FW as a benchmark. Indeed, to perform the update of FW we use the exact gradient $\bbA\bbx+\bbb$ at each iteration. Note that the optimal solution $\bbx^*$ and the optimal objective function value $F(\bbx^*)$ are pre-computed by solving the quadratic program $\min_{\bbx\in \ccalC} \frac{1}{2}\bbx^T \bbA \bbx + \bbb^T\bbx$. The left and right plots correspond to the cases that $\sigma=100$ and $\sigma=300$, respectively. In the left plot, which corresponds to the case that $\sigma=100$, we observe that our proposed SFW method performs similar to the FW algorithm, while it uses only a single noisy stochastic gradient per iteration. The vanilla mini-batch FW method with a single stochastic gradient evaluation ($b=1$) performs poorly. By increasing the size of the batch, the performance of the mini-batch FW improves, but it still underperforms SFW. In the right plot, which corresponds to the case with  a larger variance, naturally the gap between the deterministic FW method and the stochastic algorithms becomes more significant. In this case, we observe that mini-batch FW even with large batch size of $b=100$ is significantly worse than the proposed SFW method that only uses a single stochastic gradient per iteration. It is  worth mentioning that increasing the batch size in mini-batch FW accelerates convergence and improves convergence accuracy, however, the suboptimality saturates at some point. In contrast, SFW  converges to the optimal objective function at a sublinear rate in both small and large variance cases, matching our theory. 


\begin{figure*}[t!]
  \centering
        \begin{subfigure}{0.48\textwidth}
    \begin{center}
      \centerline{\includegraphics[width=1.05\columnwidth]{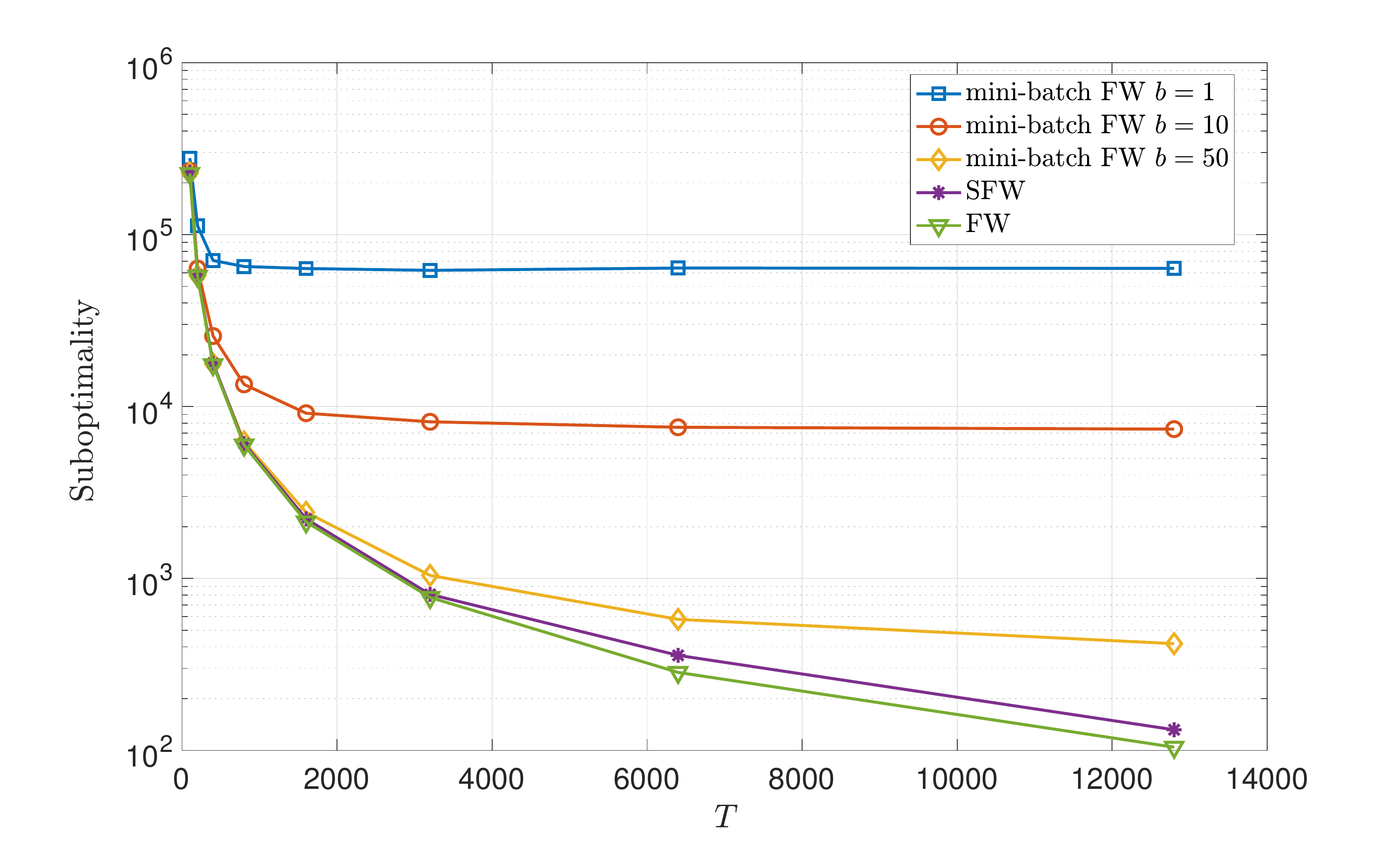}}
            \caption{$\sigma =100$ }
      \label{small_var}
    \end{center}
  \end{subfigure}
    \begin{subfigure}{0.48\textwidth}
    \begin{center}
      \centerline{\includegraphics[width=1.05\columnwidth]{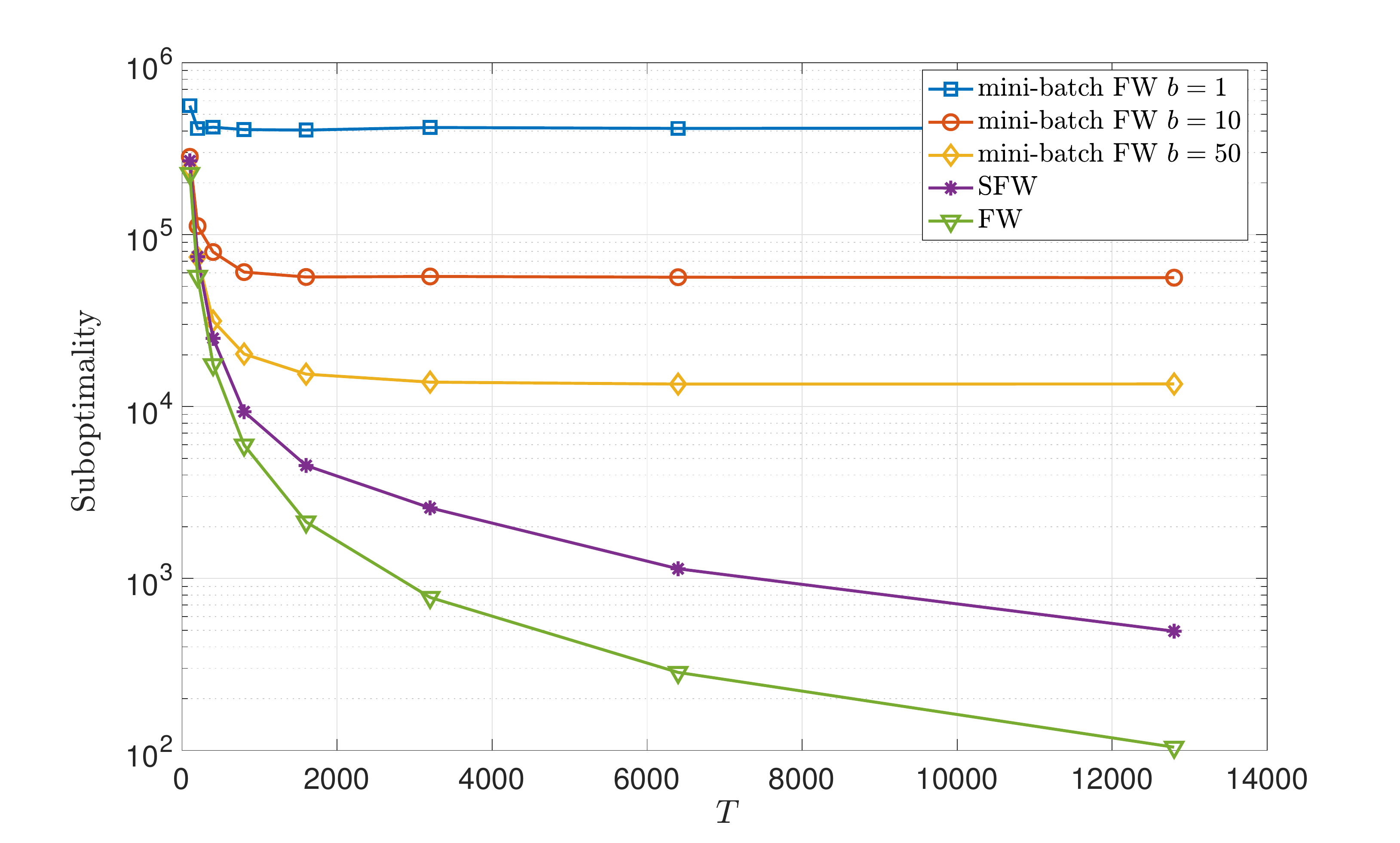}}
      \caption{$\sigma =300$}
      \label{large_var}
    \end{center}
  \end{subfigure}%
\vspace{0mm}
    \caption{Comparison of the performances of (deterministic) FW, mini-batch FW ($b=1$, $b=10$, $b=50$), and the proposed SFW method for the stochastic quadratic convex program defined in~\eqref{test_example_1}. The left and right plots correspond to the cases that the variance is $\sigma=100$ and $\sigma=300$, respectively. In both settings, the proposed SFW method that only uses a single stochastic gradient ($b=1$) performs close to the full-batch (deterministic) FW algorithm, and it outperforms the mini-batch FW method with even larger batch sizes ($b=\{1,10,50\}$). This comparison is in terms of number of iterations. If we compare the algorithms in terms of number of evaluated stochastic gradients the gap between SFW and mini-batch FW algorithms with batch sizes larger than $b=1$ will be even more significant as in this experiment SFW uses only a single stochastic gradient per iteration. 
    %
 \label{fig_cvx}}
 \end{figure*}

\vspace{2mm}
 
\textbf{Matrix Completion.} In this experiment, we study the performance of our proposed SFW algorithm in solving a matrix completion problem which is a canonical application of conditional gradient (Frank-Wolfe) type methods. We focus on a special case of matrix completion in which matrices are assumed to be symmetric. In particular, consider a symmetric matrix $\bbC \in \mathbb{S}^n$, where we only have access to a subset of its indices indicated by $\mathcal{O}$. Note that as $\bbC$ is symmetric, if $(i,j)$ is observed, i.e., $(i,j)\in \mathcal{O}$, then the pair $(j,i)$ is also known, i.e., $(j,i)\in \mathcal{O}$. Our goal is to find a symmetric positive semidefinite matrix $\bbX$ such that its elements in the set $\mathcal{O}$ are close to the ones for $\bbC$, while its nuclear rank is smaller than a threshold. In other words, we focus on the optimization problem 
\begin{align}\label{prob_matrix_completion}
&\min  f(\bbX) := \frac{1}{2} \sum_{(i,j)\in \mathcal {O}} \| \bbX_{ij} -\bbC_{ij}\|^2\nonumber\\
& \st \quad \|\bbX\| \succeq \bb0 , \quad \|\bbX\|_{*}\leq \alpha.
\end{align}
In our simulations, we set the dimension to $n=200$. We form the observation matrix as $\bbC=\hat{\bbX} + \bbE$. Here, $\hat{\bbX}$ is defined as $\hat{\bbX} =\bbW\bbW^T$ where $\bbW\in \reals^{n\times r}$ has independent normal distributed entries, and $\bbE$ is defined as $\bbE= \frac{1}{10} (\bbL +\bbL^T)$ where $\bbL\in \reals^{n\times n}$ has independent normal distributed entries. In our experiments, we set the rank to $r=10$ and the bound on the nuclear norm to $\alpha = \|\hat{\bbX}\|_{*}$, where $\|\hat{\bbX}\|_{*}$ is the nuclear norm of the matrix $\hat{\bbX}$. For settings that $\hat{\bbX}$ is not known in advance, one might use different choices of $\alpha$ and pick the one that performs better. We further define the set of observed entries $\mathcal{O}$ by sampling the elements of the upper triangular part of $\bbC$ uniformly at random with probability $0.8$. Therefore, the size of the set $\mathcal{O}$ is around $0.8\times 200^2=32,000$. In the realization that we use the set $\mathcal{O}$ has $31,884$ elements. 

To solve \eqref{prob_matrix_completion}, by using FW method, we need to solve the subproblem \citep[Chapter~7]{hazan2016introduction}
\begin{align}\label{subproblem_matrix_completion}
&\min \  \tr (\nabla f(\bbX_t)^T\bbV)  \nonumber\\
& \st \ \|\bbV\| \succeq \bb0 , \quad \|\bbV\|_{*}\leq \alpha.
\end{align}
where the gradient $\nabla f(\bbX_k)\in \reals^{n\times n}$ is defined as $\nabla f(\bbX_t)_{i,j}= \bbX_{ij} -\bbC_{ij} $ if $(i,j)\in \mathcal {O}$, and $\nabla f(\bbX_t)_{i,j}= 0$, otherwise. It can be shown that the solution to the subproblem \eqref{subproblem_matrix_completion} is given by 
\begin{align}
\bbV_t = 
\left\{
	\begin{array}{ll}
		\alpha \bbv_n \bbv_n^T  & \mbox{if } \lambda_n \geq 0, \\
		\bb0 & \mbox{if } \lambda_n < 0,
	\end{array}
\right.
\end{align}
where $\lambda_n$ is the smallest eigenvalue of the gradient  $\nabla f(\bbX_t)$ and $\bbv_n$ is its corresponding eigenvector.

\begin{figure*}[t!]
  \centering
        \begin{subfigure}{0.48\textwidth}
    \begin{center}
      \centerline{\includegraphics[width=1.05\columnwidth]{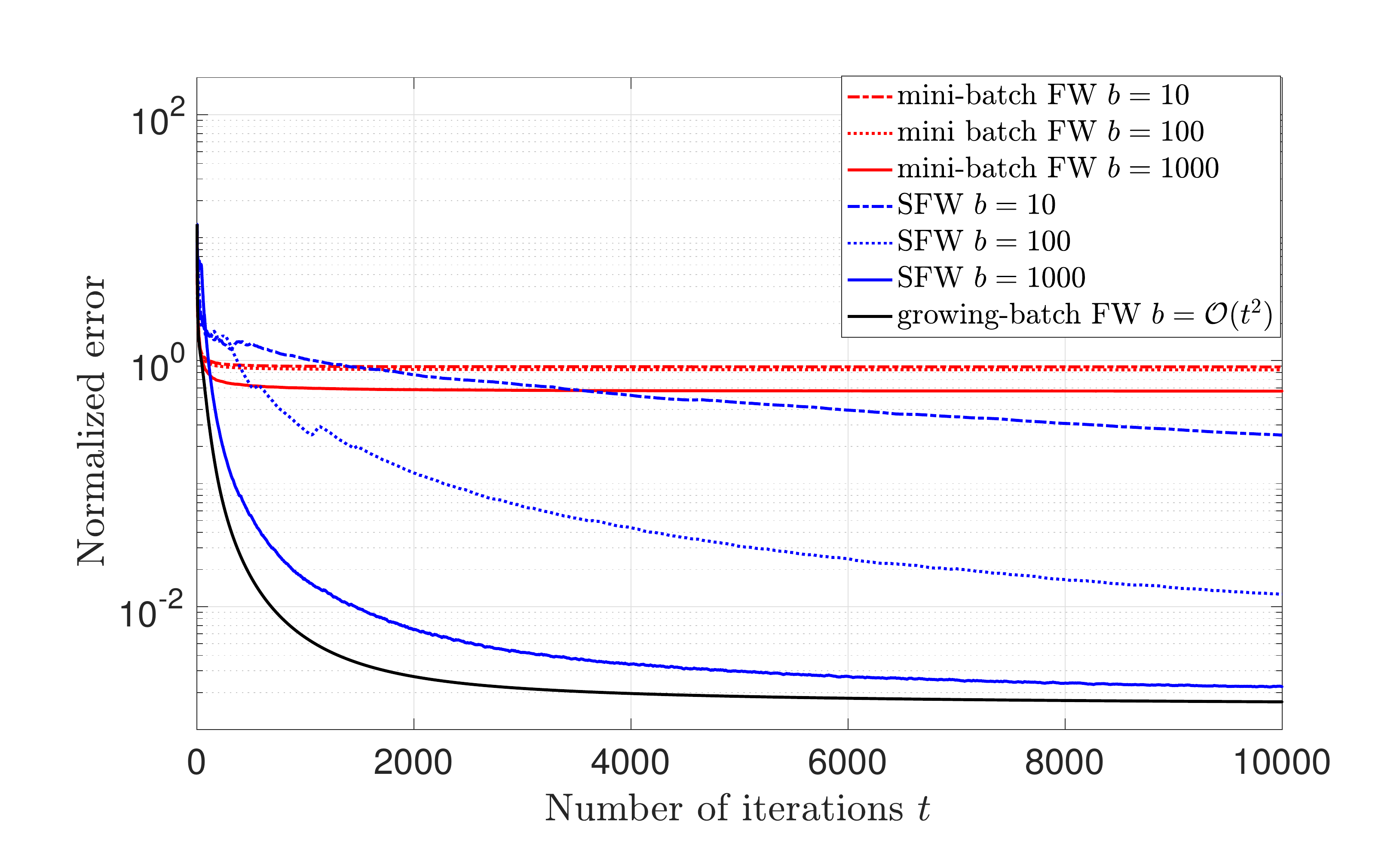}}
            \caption{Comparison in terms of number of iterations}
      \label{iter}
    \end{center}
  \end{subfigure}
    \begin{subfigure}{0.48\textwidth}
    \begin{center}
      \centerline{\includegraphics[width=1.05\columnwidth]{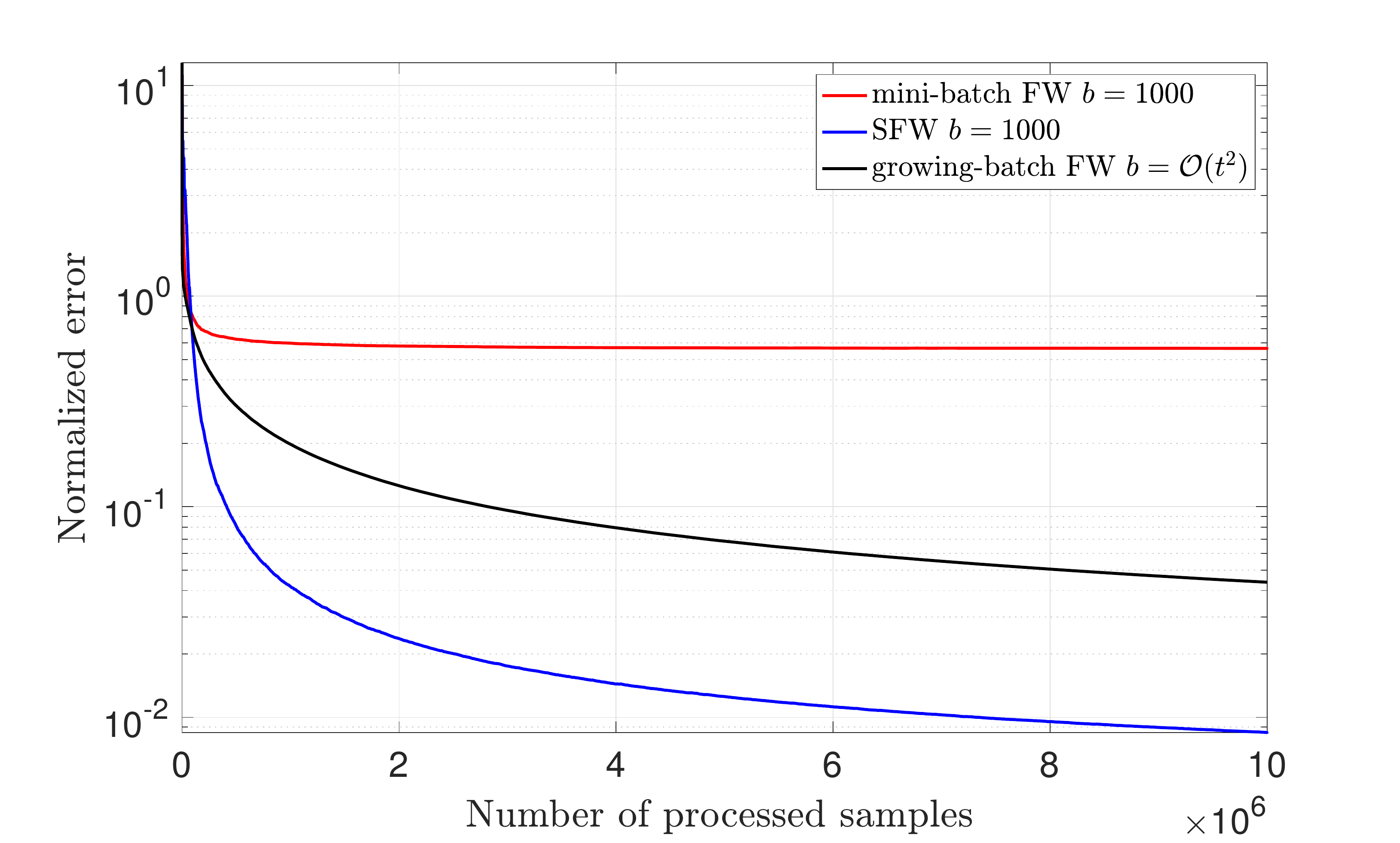}}
      \caption{Comparison in terms of number of samples}
      \label{sample}
    \end{center}
  \end{subfigure}%
%
%
\vspace{0mm}
    \caption{Comparison of the normalized error ${\sum_{(i,j)\in \mathcal {O}} \| \bbX_{ij} -\bbC_{ij}\|^2}/{\sum_{(i,j)\in \mathcal {O}} \|\bbC_{ij}\|^2}$ of the mini-batch FW algorithm, the growing mini-batch FW method proposed in \citep{DBLP:conf/icml/HazanL16}, and our proposed SFW method for solving the matrix completion problem defined in \eqref{prob_matrix_completion}. For all the choices of batch sizes considered here ($b=\{10,100, 1000\}$), the proposed SFW method outperforms the corresponding version of the mini-batch FW algorithm. As we observe, increasing the size of the mini-batch $b$ improves the convergence speed of both algorithms, but the mini-batch FW algorithm with $b=1000$ still underperforms our proposed method with $b=10$. In terms of number of iterations, the growing-batch FW method outperforms our proposed SFW method by using growing batches of size $b=\mathcal{O}(t^2)$, e.g., at iteration $t=10^4$ it uses $10^{8}$ samples; however, in terms of number of samples processed, SFW has the best performance as illustrated in the right plot.}
 \label{fig_matrix_comp}
 \end{figure*}

 Indeed, evaluation of the gradient  $\nabla f(\bbX_t)$ requires access to all observed elements in the set $\mathcal{O}$ which can be computationally costly. In such cases, one may use a subset of the set $\mathcal{O}$ as an unbiased estimate of the gradient. In our experiments, we consider (i) the mini-batch FW method that uses $b$ elements of  $\mathcal{O}$ to compute a stochastic approximation of   $\nabla f(\bbX_t)$, (ii) the growing mini-batch FW method proposed by \cite{DBLP:conf/icml/HazanL16} which uses a batch size of $b=\mathcal{O}(t^2)$ at step $t$, and (iii)~the proposed SFW method that uses the average of stochastic gradients over time as suggested in \eqref{eq:grad_averaging}.

 Figure \ref{fig_matrix_comp} illustrates the convergence paths of mini-batch FW and SFW for batch sizes of $b=\{10,100,1000\}$ as well as the convergence path of growing-batch FW in terms of both number of iterations and number of samples processed. Here, the normalized error is defined as $$\text{Normalized error} \ := \frac{\sum_{(i,j)\in \mathcal {O}} \| \bbX_{ij} -\bbC_{ij}\|^2}{\sum_{(i,j)\in \mathcal {O}} \|\bbC_{ij}\|^2}.$$ Note that the stepsize for all the three algorithms is $\gamma_t= \frac{1}{t+1}$ and the averaging parameter for our proposed SFW algorithm is $\rho_t = \frac{1}{(t+1)^{2/3}}$. As we observe in Figure \ref{iter}, even for a large batch size of $b=1000$, the mini-batch FW algorithm cannot obtain a normalized error better than $0.55$ after $10,000$ iterations. On the other hand, SFW with a small batch size of $b=10$ achieves an error of $0.25$ after $10,000$ iterations. Indeed, by increasing the batch size for SFW its performance becomes better. In particular, SFW with $b=1000$ achieves a normalized of error of $2.3\times 10^{-3}$ after $10,000$ iterations. We would like to highlight that even for the case that we set $b=1000$, we use less than $3.2\%$ of the observed elements at each iteration -- The number of observed elements is $31,884$. However, the best performance in terms of number of iterations belongs to the growing-batch FW method that uses a batch size of $b=\mathcal{O}(t^2)$. This is not surprising as the convergence rate of the growing-batch FW algorithm is $\mathcal{O}(1/t)$, while the convergence rate of  our proposed SFW is $\mathcal{O}(1/t^{1/3})$. However, the number of processed samples at each iteration by our method is much smaller than the one for growing-batch FW when $t$ becomes large. To be more precise, after $T$ iterations our method uses $T b$ samples, where $b$ is a constant much smaller than $T$, while the growing-batch FW uses $\mathcal{O}(T^3)$ samples. Therefore, to have a better comparison between these algorithms we also compare their normalized errors versus the number of processed samples which is a more accurate measure for comparing the sample complexity of these algorithms. As we observe in Figure \ref{sample}, the proposed SFW method outperforms both mini-batch FW and growing-batch FW algorithms when we compare their normalized errors versus number of samples used. Note that in theory, both growing-batch FW and SFW may require processing $\mathcal{O}(1/\eps^3)$ samples to reach a suboptimality gap of $f(\bbx_t)-f^*\leq \eps$, but in practice we observe that the proposed SFW method outperforms growing-batch FW.

\begin{figure*}[t!]
  \centering
\includegraphics[width=0.6\columnwidth]{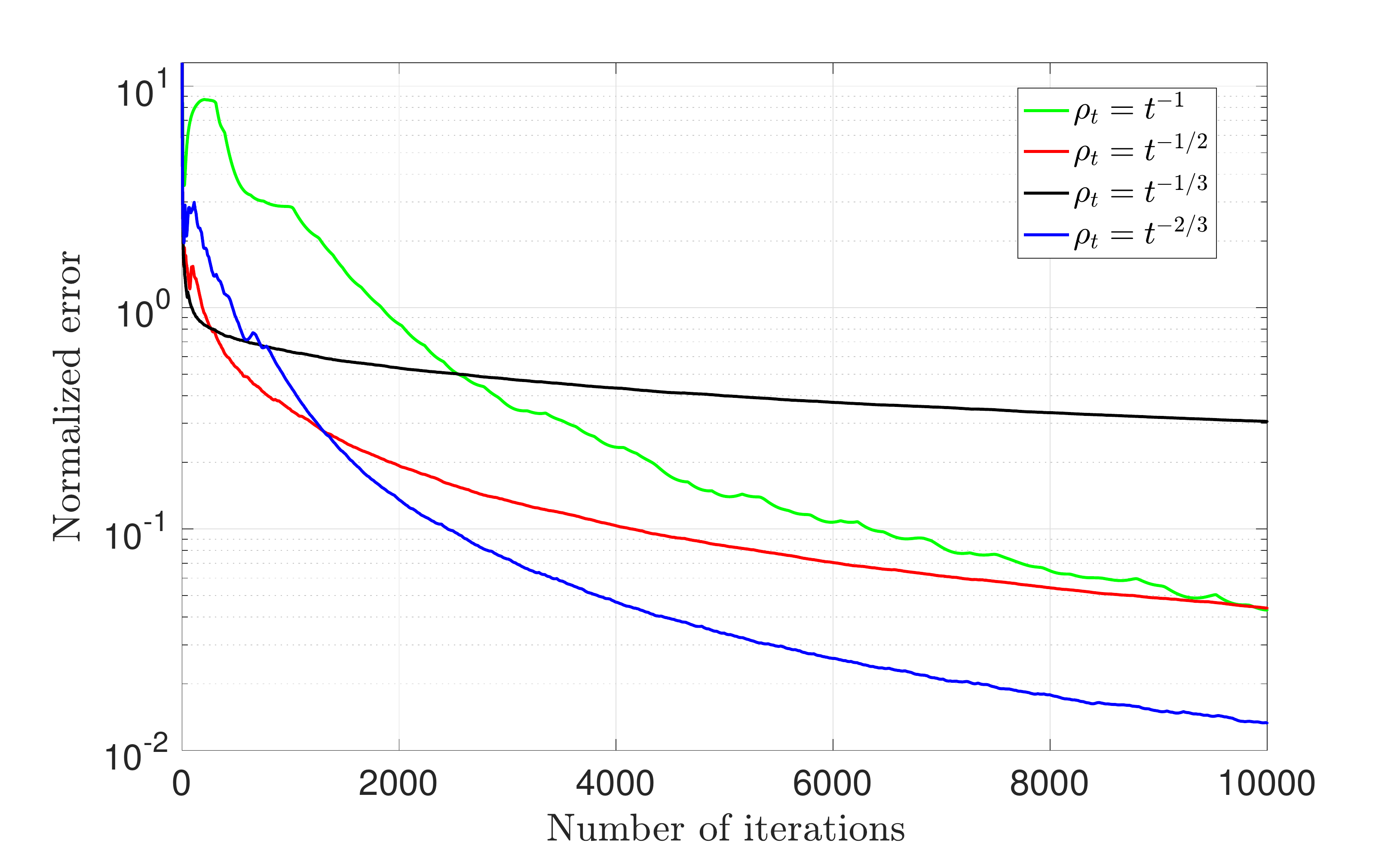}
\vspace{0mm}
    \caption{Comparison of the normalized error ${\sum_{(i,j)\in \mathcal {O}} \| \bbX_{ij} -\bbC_{ij}\|^2}/{\sum_{(i,j)\in \mathcal {O}} \|\bbC_{ij}\|^2}$ of the proposed SFW method with different choices of the averaging parameter $\rho_t$ for solving the matrix completion problem defined in \eqref{prob_matrix_completion}. In this case we set the other parameters to $b=100$ and $\gamma_t=1/(t+1)$. As suggested by our theory, the best performance belongs to the case that $\rho_t=\mathcal{O}(t^{-2/3})$.}
 \label{fig_effect_of_avg_par}
 \end{figure*}

We proceed to study the effect of the averaging parameter $\rho_t$ on the convergence of SFW. Our theoretical bound suggests that the best convergence guarantee is achieved when $\rho_t=\mathcal{O}(t^{-2/3})$. In this experiment we aim to check if this choice is reasonable relative to other possible sublinear rates. To do so, we compare the convergence paths of SFW with four different choices $\rho_t=\mathcal{O}(t^{-1/3})$, $\rho_t=\mathcal{O}(t^{-1/2})$, $\rho_t=\mathcal{O}(t^{-2/3})$, and $\rho_t=\mathcal{O}(t^{-1})$. As it can be observed in Figure \ref{fig_effect_of_avg_par}, the best performance among these four choices belongs to $\rho_t=\mathcal{O}(t^{-2/3})$ used in our theoretical results. We would like to highlight that this experiment does not prove that $\rho_t=\mathcal{O}(t^{-2/3})$ is the optimal choice.

\subsection{Submodular Setting}    
For the submodular setting, we consider a movie recommendation application \citep{serban17} consisting of $N$ users and $n$ movies. Each user $i$ has a user-specific utility function $f(\cdot,i)$ for evaluating sets of movies. The goal is to find a set of $k$ movies such that in expectation over users' preferences it provides the highest utility, i.e., $\max_{|S|\leq k}f(S)$, where $f(S)  \doteq\mathbb{E}_{i\sim P}[f(S,i)]$.  This is an instance of the (discrete) stochastic submodular maximization problem in \eqref{eq:stochsub}. For simplicity, we assume $f$ has the form of an empirical objective function, i.e. $f(S) = \frac{1}{N}\sum_{i=1}^N f(S,i)$. In other words, the distribution $P$ is assumed to be uniform over the set of users. The continuous counterpart of this problem is to consider the  the multilinear extension $F(\cdot,i)$ of any function $f(\cdot,i)$ and solve the problem in the continuous domain as follows. Let $F(\bbx) = \mathbb{E}_{i \sim \mathcal{D}} [F(\bbx,i)]$ for $x \in [0,1]^n$ and define the constraint set $\mathcal{C} =   \{ \bbx \in [0,1]^N: \sum_{i=1}^n x_i \leq k\}$. The discrete and continuous optimization formulations lead to the same optimal value \citep{calinescu2011maximizing}: $$\max_{S: |S| \leq k} f(S) = \max_{\bbx \in \mathcal{C}} F(\bbx).$$
 Therefore, by running SCG we can find a solution in the continuous domain that is at least $1-1/e$ approximation to the optimal value. By rounding that fractional solution (for instance via randomized Pipage rounding~\citep{calinescu2011maximizing}) we obtain a set whose utility is at least $1-1/e$ of the optimum solution set of size $k$. We note that randomized Pipage rounding does not need access to the value of $f$. We also remark that each iteration of SCG can be done very efficiently in $O(n)$ time (the linear program step reduces to finding the largest $k$ elements of a vector of length $n$). Therefore, this approach easily scales to big data scenarios where the size of the data set $N$ (e.g.~number of users) or the number of items $n$ (e.g.~number of movies) are very large.

\noindent In our experiments, we consider the following baselines: 
\begin{itemize}
\item[(i)] Stochastic Continuous Greedy (SCG): with $\rho_t = \frac 12 t^{-2/3}$ and mini-batch size $b$. The details for computing an unbiased estimator for the gradient of $F$ are given in Remark~\ref{sample_remark}.
  \item[(ii)] Stochastic Gradient Ascent (SGA) of \citep{hassani2017gradient}: with  stepsize $\mu_t = c/\sqrt{t}$ and mini-batch size $b$. 
 \item[(iii)] Frank-Wolfe (FW) variant of \citep{bian16guaranteed,calinescu2011maximizing}: with parameter $T$ for the total number of iterations and batch size $b$  (we further let $\alpha =1, \delta=0$, see Algorithm 1 in \citep{bian16guaranteed} or the continuous greedy method of \citep{calinescu2011maximizing} for more details).  		
 \item [(iv)] Batch-mode Greedy (Greedy): by running the vanilla greedy algorithm (in the discrete domain) in the following way. At each round of the algorithm (for selecting a new element), $b$ random users are picked and the function $f$ is estimated by the average over the $b$ selected users.  
\end{itemize}

\begin{figure*}[t!]
  \centering
    \begin{subfigure}{0.32\textwidth}
    \begin{center}
      \centerline{\includegraphics[width=1.12\columnwidth]{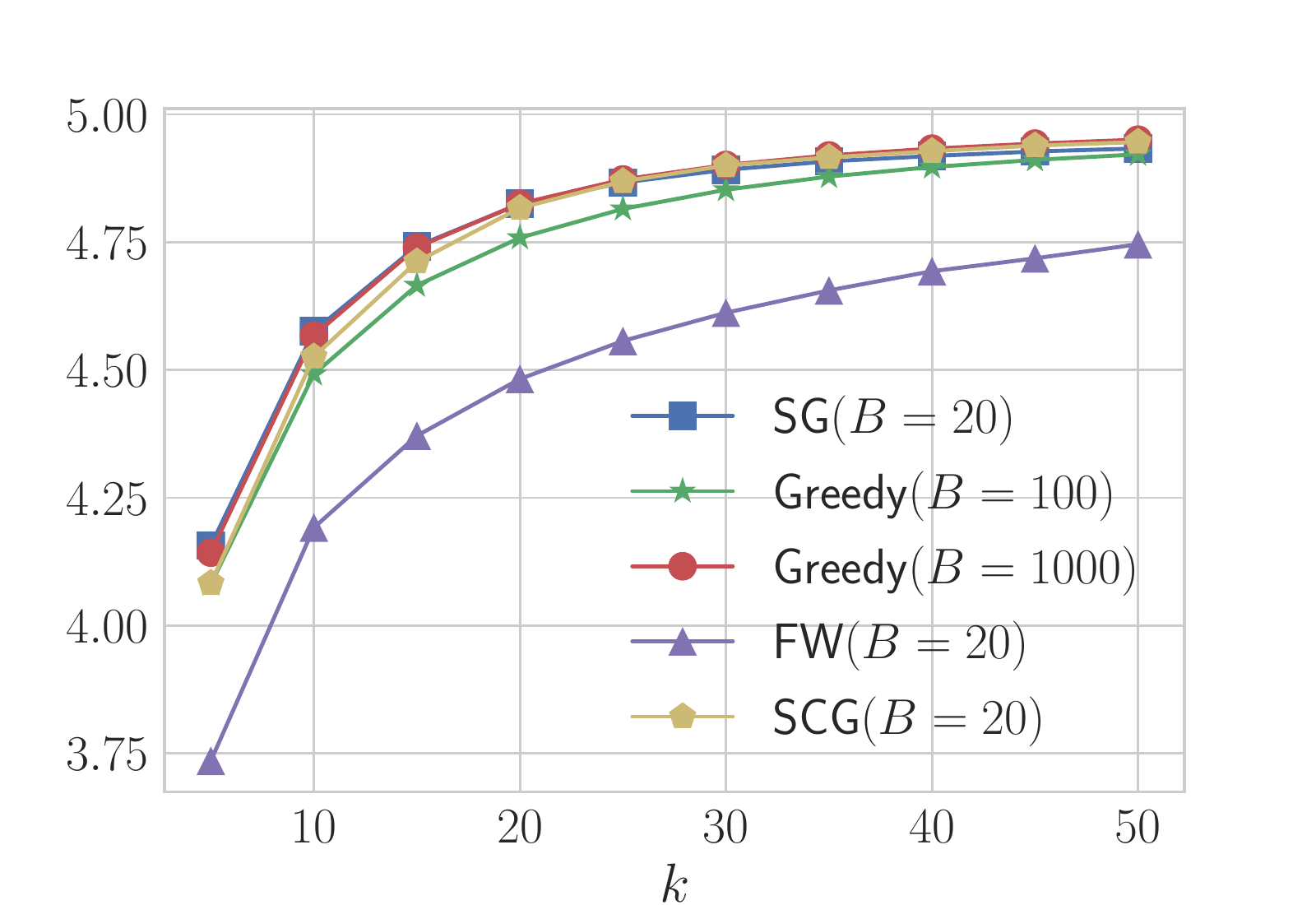}}
            \caption{facility location }
      \label{fig3}
    \end{center}
      \end{subfigure}
        \begin{subfigure}{0.32\textwidth}
    \begin{center}
      \centerline{\includegraphics[width=1.12\columnwidth]{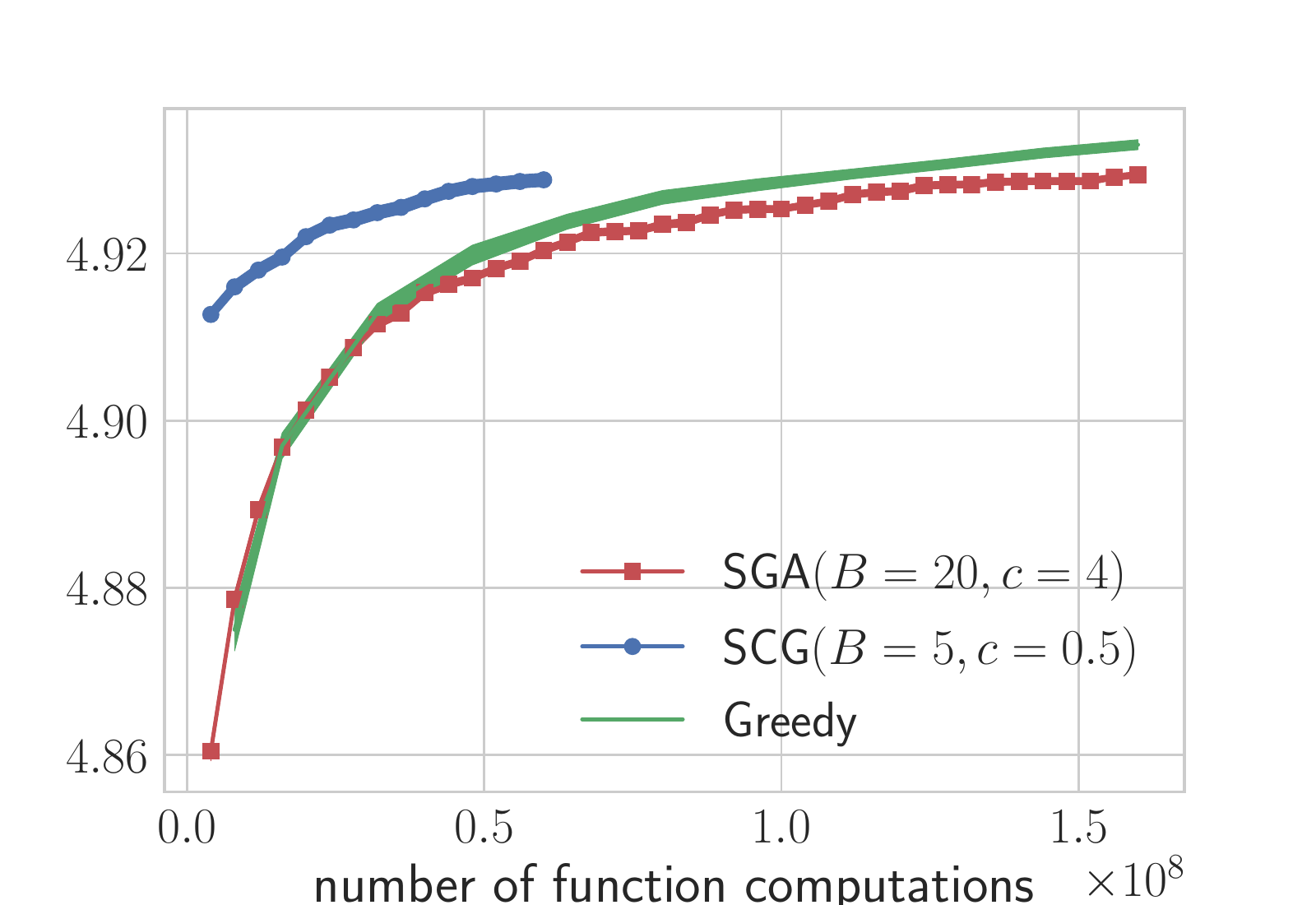}}
            \caption{facility location }
      \label{fig4}
    \end{center}
  \end{subfigure}
    \begin{subfigure}{0.32\textwidth}
    \begin{center}
      \centerline{\includegraphics[width=1.12\columnwidth]{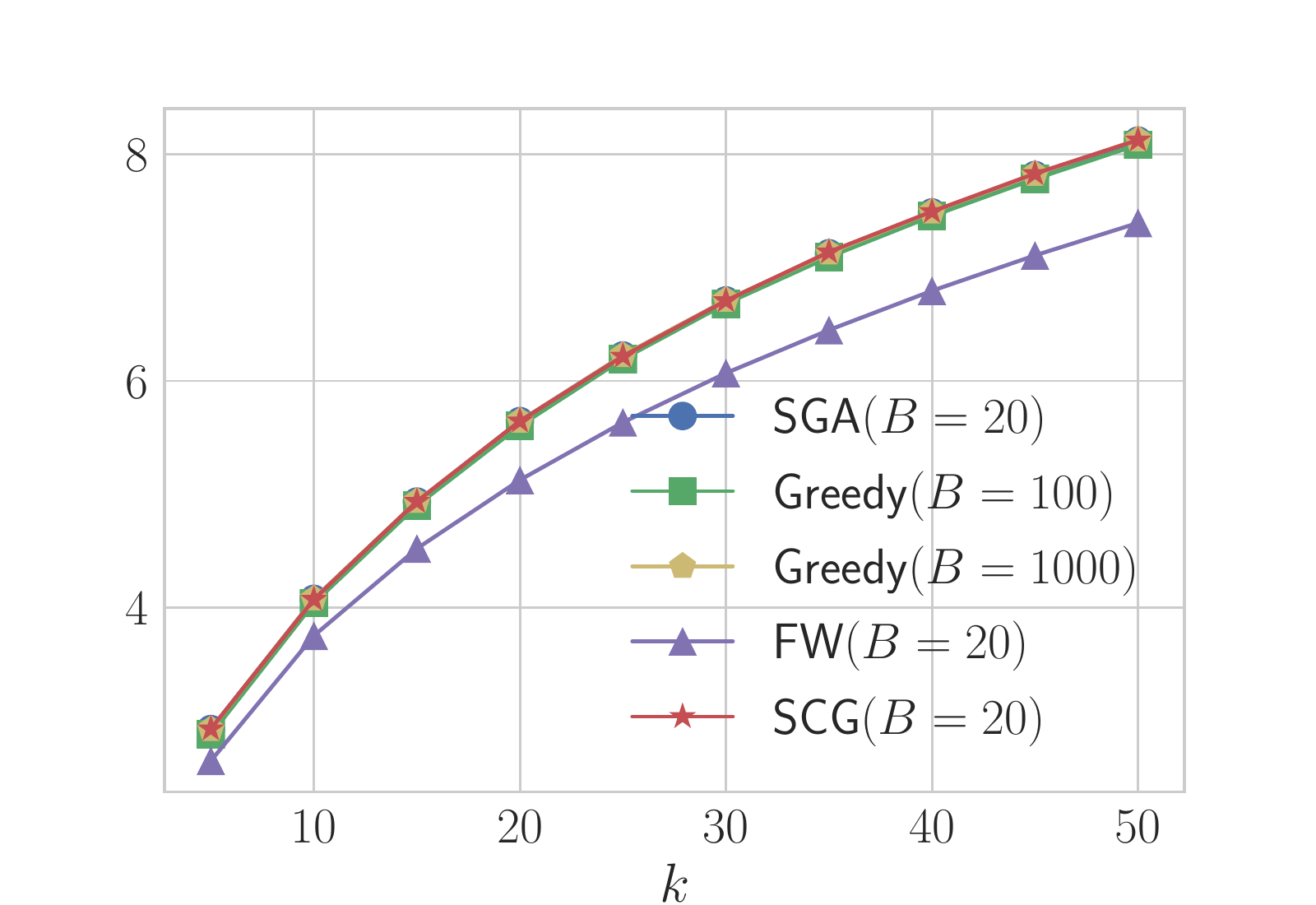}}
      \caption{concave-over-modular}
      \label{fig1}
    \end{center}
  \end{subfigure}%
    \caption{Comparison of the performances of SG, Greedy, FW, and SCG in a movie recommendation application. 
    Fig. \ref{fig3} illustrates the performance of the algorithms in terms of the facility-location objective value w.r.t. the cardinality constraint size~$k$ after $T = 2000$ iterations. Fig. \ref{fig4} compares the considered methods in terms of runtime (for a fixed $k=40$) by illustrating the facility location  objective function value vs. the number of (simple) function evaluations. 
     Fig. \ref{fig1} demonstrates the concave-over-modular objective function value vs. the size of the cardinality constraint $k$ after running the algorithms for $T = 2000$ iterations. 
 \label{fig-kolli}}
 \end{figure*}

To run the experiments we use the MovieLens data set. It consists of 1 million ratings (from 1 to 5) by $N=6041$ users for $n=4000$ movies.  Let $r_{i,j}$ denote the rating of user $i$ for movie $j$ (if such a rating does not exist we assign $r_{i,j}$ to 0). In our experiments, we consider two well motivated objective functions. The first one is called ``facility location "  where the valuation function by user $i$ is defined as  $f(S,i) = \max_{j\in S} r_{i,j}$. In words, the way user $i$ evaluates a set $S$ is by picking the highest rated movie in $S$.  Thus, the objective function is $$f_{\rm fac}(S) = \frac{1}{N}\sum_{i=1}^N  \max_{j\in S} r_{i,j}.$$ 

In our second experiment, we consider a  different user-specific valuation function which is a concave function composed with a modular function, i.e.,  $f(S,i) =(\sum_{j\in S} r_{i,j})^{1/2}.$ Again, by considering the uniform distribution over the set of users, we obtain $$f_{\rm con}(S) = \frac{1}{N} \sum_{i=1}^N  \Big(\sum_{j\in S} r_{i,j}\Big)^{1/2}.$$

Figure~\ref{fig-kolli} depicts the performance of different algorithms for the two proposed objective functions.  As Figures~\ref{fig3} and  \ref{fig1} show, the FW algorithm needs a higher mini-batch size to be comparable to SCG.  Note that a smaller batch size leads to less computational effort (under the same value for $b,T$, the computational complexity of FW, SGA, SCG is almost the same). {Figure~\ref{fig4} shows the performance of the algorithms with respect to the number of times the simple functions $f(\cdot,i)$ are evaluated. Note that the total number of (simple) function evaluations for SGA and SCG is $nbT$, where $T$ is the number of iterations.  Also, for Greedy the total number of evaluations is $nkb$.  This further shows that SCG has a better computational complexity requirement w.r.t. SGA as well as the Greedy algorithm.}

\section{Conclusion}\label{sec_conclusion}\label{sec:conclusion}
In this paper, we developed stochastic conditional gradient methods for solving convex minimization and submodular maximization problems. The main idea of the proposed methods in both domains was using a momentum term in the stochastic gradient approximation step to reduce the noise of the stochastic approximation. In particular, in the convex setting, we proposed a stochastic variant of the Frank-Wolfe algorithm called \algFW that achieves an $\eps$-suboptimal objective function value after at most $\mathcal{O}(1/\eps^3)$ iterations, if the expected objective function is smooth. The main advantage of the \algFW method (SFW) comparing to other stochastic conditional gradient methods for stochastic convex minimization is the required condition on the batch size. In particular, the batch size for SFW can be as small as $1$, while the state-of-the-art stochastic conditional gradient methods require a growing batch size.

In the submodular setting, we provided the first tight approximation guarantee for maximizing a stochastic monotone DR-submodular function subject to a general convex body constraint. We developed \alg that achieves a $[(1-1/e)\text{OPT} -\eps]$ guarantee (in expectation) with $\mathcal{O}{(1/\eps^3)}$ stochastic gradient computations. We further extended our result to the non-monotone case and introduced  \algNM that obtains a $(1/e)$ approximation guarantee. We also demonstrated that our continuous algorithm can be used to provide the first $(1-1/e)$ tight approximation guarantee for maximizing  a \textit{monotone but stochastic} submodular \textit{set} function subject to a general matroid constraint. Likewise, we provided the first $1/e$ approximation guarantee for maximizing a \textit{non-monotone} stochastic submodular \textit{set} function subject to a general matroid constraint. We believe that our results provide an important step towards unifying discrete and continuous submodular optimization in the stochastic setting. 

%
%
%
%
%
%

\section*{Acknowledgements}

This work was done while A. Mokhtari was visiting the Simons Institute for the Theory of Computing, and his work was partially supported by the DIMACS/Simons Collaboration on Bridging Continuous and Discrete Optimization through NSF grant \#CCF-1740425.  The work of A. Karbasi was supported by AFOSR YIP (FA9550-18-1-0160).

\appendix

\section{Proof of Lemma \ref{lemma:bound_on_gradient_error}}\label{app:proof_of_lemma:bound_on_gradient_error}
Use the definition $\bbd_t := (1-\rho_t) \bbd_{t-1} + \rho_t \nabla \tilde{F}(\bbx_t,\bbz_t)$  to write the difference $\|\nabla F(\bbx_t) - \bbd_t\|^2$ as
\begin{align}\label{proof:bound_on_grad_100}
\|\nabla F(\bbx_t) - \bbd_t\|^2  =\|\nabla F(\bbx_t) -(1-\rho_t) \bbd_{t-1} - \rho_t \nabla \tilde{F}(\bbx_t,\bbz_t)\|^2.
\end{align}
Add and subtract the term $(1-\rho_t)\nabla F(\bbx_{t-1})$ to the right hand side of \eqref{proof:bound_on_grad_100}, regroup the terms and expand the squared term to obtain
\begin{align}\label{proof:bound_on_grad_200}
&\|\nabla F(\bbx_t) - \bbd_t\|^2 \nonumber\\
& =\|\nabla F(\bbx_t)-(1-\rho_t)\nabla F(\bbx_{t-1})+(1-\rho_t)\nabla F(\bbx_{t-1}) -(1-\rho_t) \bbd_{t-1} - \rho_t \nabla \tilde{F}(\bbx_t,\bbz_t)\|^2 \nonumber\\
&=\|\rho_t(\nabla F(\bbx_t)-\nabla \tilde{F}(\bbx_t,\bbz_t))  +(1-\rho_t)(\nabla F(\bbx_t)-\nabla F(\bbx_{t-1}))+(1-\rho_t)(\nabla F(\bbx_{t-1}) - \bbd_{t-1} )\|^2 \nonumber\\
&=\rho_t^2\|\nabla F(\bbx_t)-\nabla \tilde{F}(\bbx_t,\bbz_t)\|^2  
   +(1-\rho_t)^2\|\nabla F(\bbx_t)-\nabla F(\bbx_{t-1})\|^2
   +(1-\rho_t)^2\|\nabla F(\bbx_{t-1}) - \bbd_{t-1} \|^2 \nonumber\\
 &  \qquad + 2\rho_t (1-\rho_t)(\nabla F(\bbx_t)-\nabla \tilde{F}(\bbx_t,\bbz_t))^T(\nabla F(\bbx_t)-\nabla F(\bbx_{t-1})) \nonumber\\
  &  \qquad + 2\rho_t (1-\rho_t)(\nabla F(\bbx_t)-\nabla \tilde{F}(\bbx_t,\bbz_t))^T(\nabla F(\bbx_{t-1}) - \bbd_{t-1} )\nonumber\\
  &  \qquad + 2 (1-\rho_t)^2(\nabla F(\bbx_t)-\nabla F(\bbx_{t-1}))^T(\nabla F(\bbx_{t-1}) - \bbd_{t-1} ).
\end{align}
Compute the conditional expectation $\E{ (.) \mid\ccalF_t}$ for both sides of \eqref{proof:bound_on_grad_200}, and use the fact that $\nabla \tilde{F}(\bbx_t,\bbz_t)$ is an unbiased estimator of the gradient $\nabla F(\bbx_t)$, i.e., $\E{ \nabla \tilde{F}(\bbx_t,\bbz_t) \mid\ccalF_t}=\nabla F(\bbx_t)$, to obtain
\begin{align}\label{proof:bound_on_grad_300}
&\E{\|\nabla F(\bbx_t) - \bbd_t\|^2\mid\ccalF_t} \nonumber\\
&=\rho_t^2\E{\|\nabla F(\bbx_t)-\nabla \tilde{F}(\bbx_t,\bbz_t)\|^2\mid\ccalF_t}  
   +(1-\rho_t)^2\|\nabla F(\bbx_t)-\nabla F(\bbx_{t-1})\|^2\nonumber\\
  &  \quad 
   +(1-\rho_t)^2\|\nabla F(\bbx_{t-1}) - \bbd_{t-1} \|^2 + 2 (1-\rho_t)^2(\nabla F(\bbx_t)-\nabla F(\bbx_{t-1}))^T(\nabla F(\bbx_{t-1}) - \bbd_{t-1} ).
   \end{align}
Use the condition in Assumption \ref{ass:bounded_variance} to replace $\E{\|\nabla F(\bbx_t)-\nabla \tilde{F}(\bbx_t,\bbz_t)\|^2\mid\ccalF_t} $ by its upper bound $\sigma^2$. Further, using Young's inequality to substitute the inner product $2\langle \nabla F(\bbx_t)-\nabla F(\bbx_{t-1}) , \nabla F(\bbx_{t-1}) - \bbd_{t-1} \rangle$ by the upper bounded $\beta_t \|\nabla F(\bbx_{t-1}) - \bbd_{t-1}\|^2+(1/\beta_t)\|\nabla F(\bbx_t)-\nabla F(\bbx_{t-1}) \|^2$ where $\beta_t>0$ is a free parameter. Applying these substitutions into \eqref{proof:bound_on_grad_300} yields 
\begin{align}\label{proof:bound_on_grad_400}
&\E{\|\nabla F(\bbx_t) - \bbd_t\|^2\mid\ccalF_t} \nonumber\\
  &\leq \rho_t^2\sigma^2
   +(1-\rho_t)^2 (1+{\beta_t}^{-1})\|\nabla F(\bbx_t)-\nabla F(\bbx_{t-1}) \|^2
   +(1-\rho_t)^2(1+\beta_t)\|\nabla F(\bbx_{t-1}) - \bbd_{t-1} \|^2.
   \end{align}
According to Assumption \ref{ass:smoothness}, the norm $\|\nabla F(\bbx_t)-\nabla F(\bbx_{t-1}) \|$ is bounded above by $L\|\bbx_t-\bbx_{t-1}\|$. In addition, the condition in Assumption \ref{ass:bounded_set} implies that $L\|\bbx_t-\bbx_{t-1}\|= L\frac{1}{T}\|\bbv_t-\bbx_t\| \leq \frac{1}{T} LD$. Therefore, we can replace $\|\nabla F(\bbx_t)-\nabla F(\bbx_{t-1}) \|$ in \eqref{proof:bound_on_grad_400} by its upper bound $\frac{1}{T} LD$ and write
\begin{align}\label{proof:bound_on_grad_500}
&\E{\|\nabla F(\bbx_t) - \bbd_t\|^2\mid\ccalF_t} \nonumber\\
  &\leq \rho_t^2\sigma^2
   +\gamma_{t}^2(1-\rho_t)^2 (1+{\beta_t}^{-1})L^2D^2
   +(1-\rho_t)^2(1+\beta_t)\|\nabla F(\bbx_{t-1}) - \bbd_{t-1} \|^2 .
   \end{align}
Since we assume that $\rho_t\leq1$ we can replace all the terms $(1-\rho_t)^2$ in \eqref{proof:bound_on_grad_500} by $(1-\rho_t)$. Applying this substitution into \eqref{proof:bound_on_grad_500} and setting $\beta:=\rho_t/2$ lead to the inequality 
\begin{align}\label{proof:bound_on_grad_600}
&\E{\|\nabla F(\bbx_t) - \bbd_t\|^2\mid\ccalF_t} \leq \rho_t^2\sigma^2
   +\gamma_{t}^2(1-\rho_t) (1+\frac{2}{\rho_t})L^2D^2
   +(1-\rho_t)(1+\frac{\rho_t}{2})\|\nabla F(\bbx_{t-1}) - \bbd_{t-1} \|^2 .
   \end{align}
Now using the inequalities $(1-\rho_t) (1+(2/\rho_t))\leq (2/\rho_t)$ and $(1-\rho_t)(1+({\rho_t}/{2}))\leq (1-\rho/2)$ we obtain
\begin{align}\label{proof:bound_on_grad_700}
\E{\|\nabla F(\bbx_t) - \bbd_t\|^2\mid\ccalF_t}\leq \rho_t^2\sigma^2
+\frac{2L^2D^2\gamma_{t}^2 }{\rho_t}
   +\left(1-\frac{\rho_t}{2}\right)\|\nabla F(\bbx_{t-1}) - \bbd_{t-1} \|^2,
\end{align}
and the claim in \eqref{eq:bound_on_gradient_error} follows.

\section{Proof of Lemma \ref{lemma:bound_on_suboptimality}}\label{app:proof_of_lemma:bound_on_suboptimality}

Based on the $L$-smoothness of the expected function $F$ we show that $F(\bbx_{t+1})$ is bounded above by
\begin{align}\label{eq:proof_lemma_convex_100}
F(\bbx_{t+1})
 \leq F(\bbx_{t}) +\nabla F(\bbx_{t})^T (\bbx_{t+1}-\bbx_t) +\frac{L}{2} \|\bbx_{t+1}-\bbx_t\|^2.
\end{align}
Replace the terms $\bbx_{t+1}-\bbx_t$ in  \eqref{eq:proof_lemma_convex_100} by $\gamma_{t+1}(\bbv_t-\bbx_t)$ and add and subtract the term $\gamma_{t+1}\bbd_t^T (\bbv_t-\bbx_t)$ to the right hand side of the resulted inequality to obtain
\begin{align}\label{eq:proof_lemma_convex_200}
F(\bbx_{t+1})
\leq 
 F(\bbx_{t}) +\gamma_{t+1} (\nabla F(\bbx_{t})-\bbd_t)^T (\bbv_t-\bbx_t) +\gamma_{t+1}\bbd_t^T (\bbv_t-\bbx_t) +\frac{L\gamma_{t+1}^2}{2} \|\bbv_{t}-\bbx_t\|^2.
\end{align}
Since $\langle \bbx^*, \bbd_t \rangle \geq \min_{v\in \ccalC} \{ \langle \bbv, \bbd_t \rangle\}  = \langle \bbv_t, \bbd_t \rangle$, we can replace the inner product $\langle \bbv, \bbd_t \rangle$ in \eqref{eq:proof_lemma_convex_200} by its upper bound $\langle \bbx^*, \bbd_t \rangle$. Applying this substitution leads to
\begin{align}\label{eq:proof_lemma_convex_300}
F(\bbx_{t+1})  \leq F(\bbx_{t}) +\gamma_{t+1} (\nabla F(\bbx_{t})-\bbd_t)^T (\bbv_t-\bbx_t) +\gamma_{t+1}\bbd_t^T (\bbx^*-\bbx_t) +\frac{L\gamma_{t+1}^2}{2} \|\bbv_{t}-\bbx_t\|^2.
\end{align}
Add and subtract $\gamma_{t+1}\nabla F(\bbx_{t})^T (\bbx^*-\bbx_t)$ to the right hand side of  \eqref{eq:proof_lemma_convex_300} and regroup the terms to obtain
\begin{align}\label{eq:proof_lemma_convex_400}
F(\bbx_{t+1})  \leq F(\bbx_{t}) +\gamma_{t+1} (\nabla F(\bbx_{t})-\bbd_t)^T (\bbv_t-\bbx^*)  +\gamma_{t+1}\nabla F(\bbx_{t})^T (\bbx^*-\bbx_t)+\frac{LD\gamma_{t+1}^2}{2} \|\bbv_{t}-\bbx_t\|^2.
\end{align}
Using the Cauchy-Schwarz inequality, we can show that the inner product $(\nabla F(\bbx_{t})-\bbd_t)^T (\bbv_t-\bbx^*)$ is bounded above by $\|\nabla F(\bbx_{t})-\bbd_t\| \|\bbv_t-\bbx^*\|$. Moreover, the inner product $\nabla F(\bbx_{t})^T (\bbx^*-\bbx_t)$ is upper bounded by $F(\bbx^*)-F(\bbx_{t})$ due to the convexity of the function $F$. Applying these substitutions into \eqref{eq:proof_lemma_convex_400} implies that 
\begin{align}\label{eq:proof_lemma_convex_500}
F(\bbx_{t+1})  
& \leq F(\bbx_{t}) +\gamma_{t+1} \|\nabla F(\bbx_{t})-\bbd_t\| \|\bbv_t-\bbx^*\|  -\gamma_{t+1}( F(\bbx_{t})-F(\bbx^*))+\frac{L\gamma_{t+1}^2}{2}\|\bbv_t-\bbx^*\| ^2
\nonumber\\
& \leq F(\bbx_{t}) +\gamma_{t+1} D\|\nabla F(\bbx_{t})-\bbd_t\| -\gamma_{t+1}( F(\bbx_{t})-F(\bbx^*))+\frac{LD^2\gamma_{t+1}^2}{2},
\end{align}
where the second inequality holds since $\|\bbv_t-\bbx^*\| \leq D$ according to Assumption \ref{ass:bounded_set}. Finally, subtract the optimal objective function value $F(\bbx^*)$ from both sides of \eqref{eq:proof_lemma_convex_500} and regroup the terms to obtain
\begin{align}\label{eq:proof_lemma_convex_600}
F(\bbx_{t+1}) -F(\bbx^*) \leq (1-\gamma_{t+1})(F(\bbx_{t}) -F(\bbx^*))+\gamma_{t+1} D\|\nabla F(\bbx_{t})-\bbd_t\| +\frac{LD^2\gamma_{t+1}^2}{2},
\end{align}
and the claim in \eqref{eq:bound_on_suboptimality} follows.

\section{Proof of Theorem \ref{thm:rate_convex}}\label{app:proof_of_thm:rate_convex}

Computing the expectation of both sides of  \eqref{eq:bound_on_suboptimality}  with respect to $\ccalF_0$ yields 
\begin{align}
\E{F(\bbx_{t+1}) -F(\bbx^*) } 
\leq (1-\gamma_{t+1})\E{(F(\bbx_{t}) -F(\bbx^*))}+\gamma_{t+1} D\E{\|\nabla F(\bbx_{t})-\bbd_t\|} +\frac{LD^2\gamma_{t+1}^2}{2}
\end{align}
Using Jensen's inequality, we can replace $\E{\|\nabla F(\bbx_{t})-\bbd_t\|}$ by its upper bound $\sqrt{\E{\|\nabla F(\bbx_{t})-\bbd_t\|^2}}$ to obtain
\begin{align}\label{eq:proof_of_rate_thm_cvx_100}
\E{F(\bbx_{t+1}) -F(\bbx^*) } 
\leq (1-\gamma_{t+1})\E{(F(\bbx_{t}) -F(\bbx^*))}+\gamma_{t+1} D\sqrt{\E{\|\nabla F(\bbx_{t})-\bbd_t\|^2}} +\frac{LD^2\gamma_{t+1}^2}{2}.
\end{align}
We proceed to analyze the rate that the sequence of expected gradient errors $\E{\|\nabla F(\bbx_{t})-\bbd_t\|^2}$ converges to zero. To do so, we first prove the following lemma which is an extension of Lemma~8 in \citep{mokhtari2015global}.

\begin{lemma}\label{lemma:induction_result}
Consider the scalars $b\geq 0$ and $c>1$. Let $\phi_t$ be a sequence of real numbers satisfying 
\begin{equation}\label{claim:rate_convg_assumption}
\phi_{t} \leq \left(1-\frac{c}{(t+t_0)^\alpha}\right) \phi_{t-1} + \frac{b}{(t+t_0)^{2\alpha}}, 
\end{equation} 
for some $\alpha \leq 1$ and $t_0\geq0$. Then, the sequence $a_t$ converges to zero at the following rate 
\begin{equation}\label{claim:rate_convg}
\phi_{t} \leq \frac{Q}{(t+t_0+1)^{\alpha}},
\end{equation}
where $Q:= \max\{ \phi_0t_0^\alpha, b/(c-1)\}$.
\end{lemma}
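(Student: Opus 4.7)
The plan is to establish the bound by induction on $t$, with the inductive hypothesis $\phi_{t-1} \leq Q/(t+t_0)^\alpha$ (so that proving $\phi_t \leq Q/(t+t_0+1)^\alpha$ completes the induction step). The base case should follow from the definition $Q \geq \phi_0 t_0^\alpha$, which is engineered so that the induction can be seeded at the initial index.

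For the inductive step, I would substitute the hypothesis directly into \eqref{claim:rate_convg_assumption}. Since the multiplier $1 - c/(t+t_0)^\alpha \leq 1$ is nonnegative when $t+t_0$ is large enough (which we may assume WLOG, else one can absorb a finite initial segment into $Q$), this yields
\begin{equation*}
\phi_t \;\leq\; \left(1-\frac{c}{(t+t_0)^\alpha}\right)\frac{Q}{(t+t_0)^\alpha} + \frac{b}{(t+t_0)^{2\alpha}}
\;=\; \frac{Q}{(t+t_0)^\alpha} - \frac{cQ-b}{(t+t_0)^{2\alpha}}.
\end{equation*}
The second key ingredient is the choice $Q \geq b/(c-1)$, which is equivalent to $(c-1)Q \geq b$, i.e., $cQ - b \geq Q$. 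Using this to further upper bound the right-hand side gives
\begin{equation*}
\phi_t \;\leq\; Q\left[\frac{1}{(t+t_0)^\alpha} - \frac{1}{(t+t_0)^{2\alpha}}\right].
\end{equation*}

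The remaining analytic step is to verify that this upper bound is itself dominated by $Q/(t+t_0+1)^\alpha$; that is, to show the real-analysis inequality
\begin{equation*}
\frac{1}{(t+t_0)^\alpha} - \frac{1}{(t+t_0+1)^\alpha} \;\leq\; \frac{1}{(t+t_0)^{2\alpha}}.
\end{equation*}
I would establish this by applying the mean value theorem to the concave function $x\mapsto x^\alpha$ (valid for $\alpha \leq 1$), which gives $(t+t_0+1)^\alpha - (t+t_0)^\alpha \leq \alpha(t+t_0)^{\alpha-1}$, and hence the left-hand side is bounded by $\alpha/(t+t_0)^{\alpha+1}$. The desired inequality then reduces to $\alpha \leq (t+t_0)^{1-\alpha}$, which holds whenever $\alpha \leq 1$ and $t+t_0 \geq 1$.

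The main obstacle is the bookkeeping around the index shift between $(t+t_0)^\alpha$ in the inductive hypothesis and $(t+t_0+1)^\alpha$ in the conclusion, and making sure the three exponents $\alpha$, $2\alpha$, and $\alpha+1$ interact correctly. The structural reason everything works is twofold: (i) the assumption $c>1$ (and the choice $Q \geq b/(c-1)$) makes the ``drift'' term $(cQ-b)/(t+t_0)^{2\alpha}$ large enough to absorb the gap between $1/(t+t_0)^\alpha$ and $1/(t+t_0+1)^\alpha$, and (ii) the restriction $\alpha \leq 1$ ensures the concavity estimate is sharp enough. The argument is essentially a standard telescoping/induction lemma of the kind used ubiquitously in stochastic approximation, and I would point out that the result specializes with $\alpha = 2/3$ to the rate needed in Theorem \ref{thm:rate_convex} for the gradient-tracking error.
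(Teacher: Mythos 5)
Your proof is correct and follows essentially the same route as the paper's: induction with the same base case, the same substitution of the inductive hypothesis into the recursion, and the same use of $Q\geq b/(c-1)$ to reduce the step to the elementary inequality $\frac{1}{s^{\alpha}}-\frac{1}{s^{2\alpha}}\leq\frac{1}{(s+1)^{\alpha}}$ with $s=t+t_0$. The only cosmetic difference is that you verify this last inequality via the mean value theorem for the concave map $x\mapsto x^{\alpha}$, whereas the paper uses the factorization $(s^{\alpha}-1)(s^{\alpha}+1)<s^{2\alpha}$ together with the subadditivity bound $(s+1)^{\alpha}\leq s^{\alpha}+1$; both are valid, and both proofs share the same (harmless, in the paper's applications) implicit assumption that the coefficient $1-c/(t+t_0)^{\alpha}$ is nonnegative.
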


\begin{proof}
We prove the claim in \eqref{claim:rate_convg} by induction. First, note that $Q\geq \phi_0 t_0^{\alpha}$ and therefore $\phi_0\leq Q/(t_0)^{\alpha}$ and the base step of the induction holds true. Now assume that the condition in \eqref{claim:rate_convg} holds for $t=k$, i.e., 
\begin{equation}\label{proof:bound_on_grad_sublinear_50000}
\phi_k\leq \frac{Q}{(k+1+t_0)^{\alpha}}.
\end{equation}
The goal is to show that \eqref{claim:rate_convg} also holds for $t=k+1$. To do so, first set $t=k+1$ in the expression in \eqref{claim:rate_convg_assumption} to obtain 
\begin{align}\label{proof:bound_on_grad_sublinear_60000}
\phi_{k+1}
\leq \left(1-\frac{c}{(k+1+t_0)^{\alpha}}\right)\phi_{k}+ \frac{b}{(k+1+t_0)^{2\alpha}}.
\end{align}
According to the definition of $Q$, we know that $b\leq Q(c-1)$. Moreover, based on the induction hypothesis it holds that $\phi_{k}\leq \frac{Q}{(k+1+t_0)^{\alpha}}$. Using these inequalities and the expression in \eqref{proof:bound_on_grad_sublinear_60000} we can write 
\begin{align}\label{proof:bound_on_grad_sublinear_70000}
\phi_{k+1}
\leq \left(1-\frac{c}{(k+1+t_0)^{\alpha}}\right)\frac{Q}{(k+1+t_0)^{\alpha}}+ \frac{Q(c-1)}{(k+1+t_0)^{2\alpha}}.
\end{align}
Pulling out $\frac{Q}{(k+1+t_0)^{\alpha}}$ as a common factor and simplifying and
reordering terms it follows that \eqref{proof:bound_on_grad_sublinear_70000} is equivalent to
\begin{align}\label{proof:bound_on_grad_sublinear_80000}
\phi_{k+1}
&\leq Q\left(\frac{(k+1+t_0)^{\alpha}-1}{(k+1+t_0)^{2\alpha}}\right).
\end{align}
Based on the inequality 
\begin{align}\label{proof:bound_on_grad_sublinear_90000}
((k+1+t_0)^{\alpha}-1)((k+1+t_0)^{\alpha}+1) < (k+1+t_0)^{2\alpha},
\end{align}
the result in \eqref{proof:bound_on_grad_sublinear_80000} implies that 
\begin{align}\label{proof:bound_on_grad_sublinear_100000}
\phi_{k+1}
\leq \left(\frac{Q}{(k+1+t_0)^{\alpha}+1}\right).
\end{align}
Since $(k+1+t_0)^{2/3}+1 \geq (k+1+t_0+1)^{2/3}=(k+t_0+2)^{2/3}$, the result in \eqref{proof:bound_on_grad_sublinear_100000} implies that 
\begin{align}\label{proof:bound_on_grad_sublinear_110000}
\phi_{k+1}
\leq \left(\frac{Q}{(k+2+t_0)^{2/3}}\right),
\end{align}
and the induction step is complete. Therefore, the result in \eqref{claim:rate_convg} holds for all $t\geq0$. 
\end{proof}

Now using the result in Lemma \ref{lemma:induction_result} we can characterize the convergence of the sequence of expected errors $\E{\|\nabla F(\bbx_{t})-\bbd_t\|^2}$ to zero. To be more precise, compute the expectation of both sides of the result in \eqref{eq:bound_on_gradient_error} with respect to $\ccalF_0$ and set $\gamma_t=2/(t+8)$ and $\rho_t=4/(t+8)^{2/3}$ to obtain
\begin{align}\label{eq:almost_done}
\E{\|\nabla F(\bbx_t) - \bbd_t\|^2} \leq \left(1-\frac{2}{(t+8)^{2/3}}\right)\E{\|\nabla F(\bbx_{t-1}) - \bbd_{t-1}\|^2}+\frac{16\sigma^2+2L^2D^2 }{(t+8)^{4/3}}.
\end{align}
According to the result in Lemma \ref{lemma:induction_result}, the inequality in  \eqref{eq:almost_done} implies that 
\begin{equation}\label{eq:done}
\E{\|\nabla F(\bbx_t) - \bbd_t\|^2} \leq \frac{Q}{(t+9)^{2/3}},
\end{equation}
where $Q=\max\{4\|\nabla F(\bbx_0) - \bbd_0\|^2,16\sigma^2+2L^2D^2\}$. This result is achieved by setting $\phi_t=\E{\|\nabla F(\bbx_t) - \bbd_t\|^2}$, $\alpha=2/3$, $b=16\sigma^2+2L^2D^2$, $c=2$, and $t_0=8$ in Lemma  \ref{lemma:induction_result}.

Now we proceed by replacing the term $\E{\|\nabla F(\bbx_t) - \bbd_t\|^2}$ in \eqref{eq:proof_of_rate_thm_cvx_100} by its upper bound in \eqref{eq:done} and $\gamma_{t+1}$ by $2/(t+9)$ to write
\begin{align}\label{eq:proof_of_rate_thm_cvx_200}
\E{F(\bbx_{t+1}) -F(\bbx^*) } 
&\leq \left(1-\frac{2}{t+9}\right)\E{(F(\bbx_{t}) -F(\bbx^*))}+ \frac{2D\sqrt{Q}}{(t+9)^{4/3}} +\frac{2LD^2}{(t+9)^{2}}.
\end{align}
Note that we can write $(t+9)^2=(t+9)^{4/3} (t+9)^{2/3}\geq (t+9)^{4/3} 9^{2/3}\geq 4(t+s)^{4/3}$.Therefore, 
\begin{align}\label{eq:proof_of_rate_thm_cvx_300}
\E{F(\bbx_{t+1}) -F(\bbx^*) } 
&\leq \left(1-\frac{2}{t+9}\right)\E{(F(\bbx_{t}) -F(\bbx^*))}+ \frac{2D\sqrt{Q}+LD^2/2}{(t+9)^{4/3}} .
\end{align}

Now we proceed to prove by induction for $t\geq 0$ that 
\begin{align}\label{eq:claim:for_induction}
\E{F(\bbx_{t}) -F(\bbx^*) } \leq \frac{Q'}{(t+9)^{1/3}}
\end{align}
where $Q'=\max\{9^{1/3}(F(\bbx_{0}) -F(\bbx^*)),2D\sqrt{Q}+LD^2/2\}$. To do so, first note that $Q'\geq 9^{1/3}(F(\bbx_{0}) -F(\bbx^*))$, and, therefore, $F(\bbx_{0}) -F(\bbx^*)\leq Q'/9^{1/3}$. This leads to to the base of induction for $t=0$. Now assume that the inequality \eqref{eq:claim:for_induction} holds for $t=k$, i.e., $\E{F(\bbx_{k}) -F(\bbx^*) } \leq \frac{Q'}{(k+9)^{1/3}}$ and we aim to show that it also holds for $t=k+1$. 

To do so first set $t=k$ in \eqref{eq:proof_of_rate_thm_cvx_300} and replace $\E{F(\bbx_{k}) -F(\bbx^*) }$ by its upper bounds $\frac{Q'}{(k+9)^{1/3}}$ (as guaranteed by the hypothesis of the induction) to obtain 
\begin{align}\label{eq:proof_of_rate_thm_cvx_400}
\E{F(\bbx_{k+1}) -F(\bbx^*) } 
&\leq \left(1-\frac{2}{k+9}\right)\frac{Q'}{(k+9)^{1/3}}+ \frac{2D\sqrt{Q}+LD^2/2}{(k+9)^{4/3}} .
\end{align}
Now as in the proof of Lemma \ref{lemma:induction_result}, replace $2D\sqrt{Q}+LD^2/2$ by $Q'$ and simplify the terms to reach the inequality 
\begin{align}\label{eq:proof_of_rate_thm_cvx_400}
\E{F(\bbx_{k+1}) -F(\bbx^*) } 
&\leq  Q'\left(\frac{k+8}{(k+9)^{4/3}}\right)\leq \frac{Q'}{(k+10)^{1/3}},
\end{align}
and the induction is complete. Therefore, the inequality in \eqref{eq:claim:for_induction} holds for all $t\geq 0$.

\section{Proof of Theorem \ref{thm:almost_sure_cnvrg}}\label{app:proof_of_thm:almost_sure_cnvrg}

To prove the claim in \eqref{eq:almost_sure_cnvrg} we first show that the sum $\sum_{t=1}^\infty \rho_t\|\nabla F(\bbx_{t-1}) - \bbd_{t-1}\|^2$ is finite almost surely. To do so, we construct a supermartingale using the result in Lemma \ref{lemma:bound_on_gradient_error}. Let's define the stochastic process $\zeta_t$ as
\begin{equation}\label{alp}
\zeta_{t}:= \|\nabla F(\bbx_{t})-\bbd_{t}\|^2
+2L^2{D^2}\sum_{u=t+1}^{\infty}\frac{\gamma_{u}^2}{\rho_{u}} 
+\sigma^2\sum_{u=t+1}^{\infty}  \rho_{u}^2.
\end{equation}
Note that $\zeta_t$ is well defined because the sums on the the right hand side of \eqref{alp} are finite according to the hypotheses of Theorem~\ref{thm:almost_sure_cnvrg}. Further, define the stochastic process $\xi_t$ as
\begin{equation}\label{eq:beta}
   \xi_t :=\  \frac{\rho_{t+1}}{2} \|\nabla F(\bbx_{t})-\bbd_{t}\|^2.
\end{equation}
Considering the definitions of the sequences $\zeta_t$ and $\xi_t$ and expression \eqref{eq:bound_on_gradient_error} in Lemma \ref{lemma:bound_on_gradient_error} we can write 
\begin{equation}\label{eq:beta22}
  \E{\zeta_{t}\mid \bbx_t} \leq \zeta_{t-1} -\xi_{t-1}.
\end{equation}
Since the sequences $\zeta_t$ and $\xi_t$ are nonnegative it follows from \eqref{eq:beta22} that they satisfy the conditions of the supermartingale convergence theorem; see e.g. (Theorem E$7.4$ in \citep{solo1994adaptive}). Therefore, we can conclude that: (i) The sequence $\zeta_t$ converges almost surely to a limit. (ii) The sum $\sum_{t=0}^{\infty}\xi_t < \infty$ is almost surely finite. Hence, the second result implies that 
\begin{equation}\label{eq:beta33}
\sum_{t=1}^\infty \rho_t\|\nabla F(\bbx_{t-1}) - \bbd_{t-1}\|^2<\infty, \qquad \text{a.s.}
\end{equation}
 Based on expression \eqref{eq:bound_on_suboptimality} in Lemma \ref{lemma:bound_on_suboptimality}, we know that the suboptimality $F(\bbx_{t+1})-F(\bbx^*)$ is upper bounded by 
\begin{equation}\label{eq:beta44}
{F(\bbx_{t+1}) -F(\bbx^*) } 
\leq (1-\gamma_{t+1}){(F(\bbx_{t}) -F(\bbx^*))}+\gamma_{t+1} D\|\nabla F(\bbx_{t})-\bbd_t\| +\frac{LD^2\gamma_{t+1}^2}{2}.
\end{equation}
 Further use Jensen's inequality to replace $\gamma_{t+1}\|\nabla F(\bbx_{t})-\bbd_t\|$ by the sum $\beta_{t+1}\|\nabla F(\bbx_{t})-\bbd_t\|^2+\gamma_{t+1}^2/\beta_{t+1}$ where $\beta_{t+1}$ is a free positive parameter. Set $\beta_{t+1}=\rho_{t+1}$ to obtain  $\gamma_{t+1}\|\nabla F(\bbx_{t})-\bbd_t\|\leq \rho_{t+1}\|\nabla F(\bbx_{t})-\bbd_t\|^2+\gamma_{t+1}^2/\rho_{t+1}$. Applying this substitution into \eqref{eq:beta44} implies that
\begin{equation}\label{eq:beta441}
{F(\bbx_{t+1}) -F(\bbx^*) } 
\leq (1-\gamma_{t+1}){(F(\bbx_{t}) -F(\bbx^*))}+
\rho_{t+1}D\|\nabla F(\bbx_{t})-\bbd_t\|^2+\frac{D\gamma_{t+1}^2}{\rho_{t+1}}
+\frac{LD^2\gamma_{t+1}^2}{2}.
\end{equation}

To conclude the almost sure convergence of the sequence $F(\bbx_{t}) -F(\bbx^*)$ to zero from the expression in \eqref{eq:beta44} we first state the following Lemma from \citep{DBLP:books/lib/BertsekasT96}.

\begin{lemma}\label{lemma:bertsekas_lemma}
Let $\{X_t\}$, $\{Y_t\}$, and $\{Z_t\}$ be three sequences  of numbers such that $Y_t\geq 0$ for all $t\geq 0$. Suppose that 
\begin{equation}\label{claim:bertsekas_lemma}
X_{t+1}\leq X_t -Y_t+Z_t,\qquad  \for \  \ t=0,1,2,\dots
\end{equation}
and $\sum_{t=0}^\infty Z_t<\infty$. Then, either $X_t\to -\infty $ or else $\{X_t\}$ converges to a finite value and $\sum_{t=0}^\infty Y_t<\infty$.
\end{lemma}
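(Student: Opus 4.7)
The plan is to reduce the recursion to a manifestly monotone sequence by absorbing the tail of the $Z$-series into $X_t$. Since $\sum_{t=0}^\infty Z_t$ converges to some finite number $S$, the tail $T_t := \sum_{s=t}^\infty Z_s$ is well defined for every $t$ and satisfies $T_t \to 0$ as $t \to \infty$, together with the telescoping identity $T_{t+1} = T_t - Z_t$.

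First I would introduce the auxiliary sequence $W_t := X_t + T_t$. Plugging the hypothesis $X_{t+1} \leq X_t - Y_t + Z_t$ into the definition gives
$$
W_{t+1} \;=\; X_{t+1} + T_{t+1} \;\leq\; X_t - Y_t + Z_t + (T_t - Z_t) \;=\; W_t - Y_t.
$$
Because $Y_t \geq 0$, this shows that $\{W_t\}$ is a non-increasing sequence of real numbers. Every monotone non-increasing sequence of reals either converges to a finite limit or diverges to $-\infty$. Since $T_t \to 0$, the same dichotomy transfers to $X_t = W_t - T_t$: either $X_t \to -\infty$, or $X_t$ converges to a finite limit.

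To get the summability of $Y_t$ in the convergent case, I would unroll the monotonicity inequality: summing $W_{s+1} - W_s \leq -Y_s$ from $s=0$ to $s=t$ yields $\sum_{s=0}^{t} Y_s \leq W_0 - W_{t+1}$. If $W_t$ does not diverge to $-\infty$ (equivalently, if $X_t$ does not diverge to $-\infty$), then $W_{t+1}$ is bounded below, so the partial sums $\sum_{s=0}^t Y_s$ are bounded above and, being nondecreasing in $t$, converge; hence $\sum_{s=0}^\infty Y_s < \infty$.

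The argument is largely bookkeeping, so there is no real conceptual obstacle; the only point to be careful about is to ensure $T_t$ is well defined and vanishes in the limit, both of which follow immediately from the assumed convergence of $\sum_{t=0}^\infty Z_t$. This lemma is then directly applicable to the bound~\eqref{eq:beta441}, with $X_t = F(\bbx_t) - F(\bbx^*)$ (which is lower bounded, hence cannot tend to $-\infty$), $Y_t = \gamma_{t+1}(F(\bbx_t)-F(\bbx^*))$, and $Z_t$ the remaining nonnegative terms whose summability has been established via~\eqref{eq:beta33} together with conditions (ii) and (iv) in Theorem~\ref{thm:almost_sure_cnvrg}.
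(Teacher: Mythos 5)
Your proof is correct. Note that the paper itself supplies no proof of this lemma --- it is imported verbatim from \citet{DBLP:books/lib/BertsekasT96} --- so there is no in-paper argument to compare against; your tail-absorption trick (setting $W_t = X_t + \sum_{s=t}^{\infty} Z_s$ so that $W_{t+1}\leq W_t - Y_t$ is monotone, then transferring the dichotomy back to $X_t$ via $T_t\to 0$ and reading off $\sum_t Y_t \leq W_0 - \inf_t W_t$) is precisely the standard deterministic supermartingale-type argument and fills the gap cleanly. The only hypothesis you rely on beyond what is stated is nothing: convergence of $\sum_t Z_t$ (not absolute convergence) already makes the tails $T_t$ well defined and vanishing, and $Z_t$ is never required to be nonnegative, so the argument is fully general. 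Your closing remark about the identifications $X_t$, $Y_t$, $Z_t$ in the application matches the paper's use of the lemma in the proof of Theorem~\ref{thm:almost_sure_cnvrg}.
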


From now on we focus on realizations that support $\sum_{t=1}^\infty \rho_t\|\nabla F(\bbx_{t-1}) - \bbd_{t-1}\|^2<\infty$ which have probability 1, according to the result in \eqref{eq:beta33}. 

Consider the outcome of Lemma \ref{lemma:bertsekas_lemma} with the identifications $X_t=F(\bbx_{t}) -F(\bbx^*)$, $Y_t=\gamma_{t+1}(F(\bbx_{t}) -F(\bbx^*))$, and $Z_t= \rho_{t+1}D\|\nabla F(\bbx_{t})-\bbd_t\|^2+\frac{D\gamma_{t+1}^2}{\rho_{t+1}} +\frac{LD^2\gamma_{t+1}^2}{2}$. Since the sequence $X_t=F(\bbx_{t}) -F(\bbx^*)$ is always non-negative, the first outcome of Lemma \ref{lemma:bertsekas_lemma} is impossible and therefore we obtain that $F(\bbx_{t}) -F(\bbx^*)$ converges to a finite limit and 
\begin{equation}\label{eq:beta500}
\sum_{t=0}^\infty \gamma_{t+1}F(\bbx_{t}) -F(\bbx^*)<\infty.
\end{equation}
Recall that both of these results hold almost surely, since they are valid for the realization that $\sum_{t=1}^\infty \rho_t\|\nabla F(\bbx_{t-1}) - \bbd_{t-1}\|^2<\infty$, which occur with probability 1 as shown in \eqref{eq:beta33}. The result in \eqref{eq:beta500} implies that $\liminf_{t\to\infty}F(\bbx_{t}) -F(\bbx^*)=0$ almost surely. Moreover, we know that the sequence $\{F(\bbx_{t}) -F(\bbx^*)\}$ almost surely converges to a finite limit. Combining these two observation we obtain that the finite limit is zero, and, therefore, $\lim_{t\to\infty}F(\bbx_{t}) -F(\bbx^*)=0$ almost surely. Hence, the claim in \eqref{eq:almost_sure_cnvrg} follows.

\section{Proof of Lemma \ref{lemma:bound_on_grad_approx_sublinear}}\label{proof:lemma:bound_on_grad_approx_sublinear}

By following the steps from \eqref{proof:bound_on_grad_100} to \eqref{proof:bound_on_grad_300} in the proof of Lemma~\ref{lemma:bound_on_gradient_error} we can show that
%
%
\begin{align}\label{proof:bound_on_grad_300000}
&\E{\|\nabla F(\bbx_t) - \bbd_t\|^2\mid\ccalF_t} =\rho_t^2\E{\|\nabla F(\bbx_t)-\nabla \tilde{F}(\bbx_t,\bbz_t)\|^2\mid\ccalF_t}+   (1-\rho_t)^2\|\nabla F(\bbx_{t-1}) - \bbd_{t-1} \|^2 
     \nonumber\\
     &  \quad
   +(1-\rho_t)^2\|\nabla F(\bbx_t)-\nabla F(\bbx_{t-1})\|^2
    + 2 (1-\rho_t)^2\langle \nabla F(\bbx_t)-\nabla F(\bbx_{t-1}) , \nabla F(\bbx_{t-1}) - \bbd_{t-1} \rangle.
 \end{align}
  The term $\E{\|\nabla F(\bbx_t)-\nabla \tilde{F}(\bbx_t,\bbz_t)\|^2\mid\ccalF_t}$ can be bounded above by $\sigma^2$ according to Assumption \ref{ass:bounded_variance}. Based on Assumptions \ref{ass:bounded_set} and \ref{ass:smoothness}, we can also show that the squared norm $\|\nabla F(\bbx_t)-\nabla F(\bbx_{t-1})\|^2$ is upper bounded by $L^2D^2/T^2$. Moreover, the inner product $2\langle \nabla F(\bbx_t)\!-\!\nabla F(\bbx_{t-1}) , \nabla F(\bbx_{t-1}) - \bbd_{t-1} \rangle$ can be upper bounded by $\beta_t \|\nabla F(\bbx_{t-1}) - \bbd_{t-1}\|^2+(1/\beta_t) L^2D^2/T^2 $ using Young's inequality (i.e., $2\langle \bba,\bbb\rangle \leq \beta\|\bba\|^2+\|\bbb\|^2/\beta$  for any $\bba,\bbb\in \reals^n$ and $\beta>0$) and the conditions in Assumptions \ref{ass:bounded_set} and \ref{ass:smoothness}, where $\beta_t>0$ is a free scalar. Applying these substitutions into \eqref{proof:bound_on_grad_300} leads to 
\begin{align}\label{proof:bound_on_grad_400000}
&\E{\|\nabla F(\bbx_t) - \bbd_t\|^2\mid\ccalF_t} \leq \rho_t^2\sigma^2
   +(1-\rho_t)^2 (1+\frac{1}{\beta_t})\frac{L^2D^2}{T^2}
   +(1-\rho_t)^2(1+\beta_t)\|\nabla F(\bbx_{t-1}) - \bbd_{t-1} \|^2. 
   \end{align}
Now by following the steps from \eqref{proof:bound_on_grad_400} to \eqref{proof:bound_on_grad_700} and computing the expected value with respect to $\ccalF_0$ 
%
we obtain
\begin{align}\label{eq:grad_error_bound}
\E{\|\nabla F(\bbx_{t}) - \bbd_{t}\|^2}\leq \left(1-\frac{\rho_t}{2}\right)\E{\|\nabla F(\bbx_{t-1}) - \bbd_{t-1}\|^2}+ \rho_t^2\sigma^2 
+\frac{2L^2D^2 }{\rho_t T^2}.
\end{align}
Define $\phi_t:=\E{ {\|\nabla F(\bbx_{t}) - \bbd_{t}\|^2} }$ and set $\rho_t=\frac{4}{(t+8)^{2/3}}$ to obtain
\begin{align}\label{proof:bound_on_grad_sublinear_100}
\phi_t
\leq \left(1-\frac{2}{(t+8)^{2/3}}\right)\phi_{t-1}+ \frac{16\sigma^2}{(t+8)^{4/3}}
   +\frac{L^2D^2(t+8)^{2/3}}{2T^2}.
\end{align}
Now use the conditions $8\leq T$ and $t\leq T$ to replace $1/T$ in \eqref{proof:bound_on_grad_sublinear_100} by its upper bound $2/(t+8)$. Applying this substitution leads to 
\begin{align}\label{proof:bound_on_grad_sublinear_300}
\phi_t
\leq \left(1-\frac{2}{(t+8)^{2/3}}\right)\phi_{t-1}+ \frac{16\sigma^2+2L^2 D^2}{(t+8)^{4/3}}.
\end{align}
Now using the result in Lemma~\ref{lemma:induction_result}, we obtain that 

\begin{equation}\label{claim:rate_convg1000}
\phi_{t} \leq \frac{Q}{(t+9)^{2/3}},
\end{equation}
where $Q:= \max\{ 4 \phi_0 , 16\sigma^2+2L^2 D^2\}$. Replacing $\phi_t$ by its definition $\E{ {\|\nabla F(\bbx_{t}) - \bbd_{t}\|^2} }$ yields \eqref{eq:grad_error_bound_2}.

\section{Proof of Theorem \ref{thm:optimal_bound_greedy}}\label{proof:thm:optimal_bound_greedy}

Let $\bbx^*$ be the global maximizer within the constraint set $\mathcal{C}$. Based on the smoothness of the function $F$ with constant $L$ we can write 
\begin{align}\label{proof:final_result_100}
F(\bbx_{t+1})
& \geq F(\bbx_{t}) + \langle \nabla F(\bbx_t), \bbx_{t+1}-\bbx_t \rangle - \frac{L}{2}  || \bbx_{t+1}-\bbx_t||^2 \nonumber\\
& = F(\bbx_{t}) + \frac{1}{T} \langle \nabla F(\bbx_t),\bbv_{t} \rangle - \frac{L}{2T^2} || \bbv_{t} ||^2,
\end{align}
where the equality follows from the update in \eqref{eq:von3}. Since  $\bbv_t$ is in the set $\ccalC$, it follows from Assumption~\ref{ass:bounded_set} that the norm $\|\bbv_t\|^2$ is bounded above by $D^2$. Apply this substitution and add and subtract the inner product $\langle \bbd_t,\bbv_t   \rangle$ to the right hand side of \eqref{proof:final_result_100} to obtain 
\begin{align}\label{proof:final_result_200}
 F(\bbx_{t+1}) 
&\geq  F(\bbx_{t}) + \frac{1}{T} \langle \bbv_{t}, \bbd_t \rangle +  \frac{1}{T} \langle \bbv_{t}, \nabla F(\bbx_t) - \bbd_t \rangle - \frac{LD^2}{2T^2} \nonumber\\
&\geq F(\bbx_{t}) +\frac{1}{T}\langle \bbx^*, \bbd_t \rangle + \frac{1}{T} \langle \bbv_{t}, \nabla F(\bbx_t) - \bbd_t \rangle - \frac{LD^2}{2T^2}.
\end{align}
Note that the second inequality in \eqref{proof:final_result_200} holds since based on \eqref{eq:von2} we can write  
\begin{equation}
\langle \bbx^*, \bbd_t \rangle \leq \max_{v\in \ccalC} \{ \langle \bbv, \bbd_t \rangle\}  = \langle \bbv_t, \bbd_t \rangle.
\end{equation}
Now add and subtract the inner product $ \langle \bbx^*, \nabla F(\bbx_t) \rangle /T$ to the RHS of \eqref{proof:final_result_200} to get
\begin{align}\label{proof:final_result_300}
 F(\bbx_{t+1}) & \geq F(\bbx_{t}) + \frac{1}{T} \langle \bbx^*, \nabla F(\bbx_t) \rangle 
 +  \frac{1}{T} \langle \bbv_{t} - \bbx^*, \nabla F(\bbx_t) - \bbd_t \rangle - \frac{LD^2}{2T^2}.
\end{align}
We further have $ \langle \bbx^*, \nabla F(\bbx_t) \rangle \geq F(\bbx^*) - F(\bbx_t)$; this follows from monotonicity of $F$ as well as concavity of $F$ along positive directions; see, e.g., \citep{calinescu2011maximizing}. Moreover, by Young's inequality we can show that the inner product $\langle \bbv_{t} - \bbx^*, \nabla F(\bbx_t) - \bbd_t \rangle$ is lower bounded by 
\begin{equation}
\langle \bbv_{t} - \bbx^*, \nabla F(\bbx_t) - \bbd_t \rangle \geq -  \frac{\beta_t}{2} ||\bbv_{t} - \bbx^*||^2 - \frac{1}{2\beta_t}{|| \nabla F(\bbx_t) - \bbd_t ||^2},
\end{equation}
for any $\beta_t>0$. By applying these substitutions into \eqref{proof:final_result_300} we obtain 
\begin{align}\label{proof:final_result_400}
 &F(\bbx_{t+1}) 
\geq F(\bbx_{t}) + \frac{1}{T} (F(\bbx^*) - F(\bbx_{t})) - \frac{LD^2}{2T^2}
-  \frac{1}{2T}\left(\beta_t||\bbv_{t} - \bbx^*||^2 + \frac{|| \nabla F(\bbx_t) - \bbd_t ||^2}{\beta_t}\right).
\end{align}
Replace $||\bbv_{t} - \bbx^*||^2$ by its upper bound $4D^2$ and compute the expected value of \eqref{proof:final_result_400} to write 
\begin{align}\label{proof:final_result_500}
& \E{F(\bbx_{t+1}) } \geq  \E{F(\bbx_{t})} + \frac{1}{T}\E{F(\bbx^*) - F(\bbx_{t}))} 
-  \frac{1}{2T} \left[ 4\beta_t D^2 + \frac{\E{|| \nabla F(\bbx_t) - \bbd_t ||^2}}{\beta_t}\right] - \frac{LD^2}{2T^2}.
\end{align}
Substitute $\E{|| \nabla F(\bbx_t) - \bbd_t ||^2}$ by its upper bound ${Q}/({(t+9)^{2/3}})$ according to the result in \eqref{eq:grad_error_bound_2}. Further, set $\beta_t= (Q^{1/2})/(2D(t+9)^{1/3})$ and regroup the resulted expression to obtain 
\begin{align}\label{proof:final_result_600}
 \E{F(\bbx^*) - F(\bbx_{t+1}) }& \leq \left(1-\frac{1}{T}\right) \E{F(\bbx^*) -F(\bbx_{t})} 
+  \frac{2DQ^{1/2}}{(t+9)^{1/3}T}+\frac{LD^2}{2T^2}.
\end{align}
By applying the inequality in \eqref{proof:final_result_600} recursively for $t=0,\dots,T-1$ we obtain 
\begin{align}\label{proof:final_result_700}
& \E{F(\bbx^*) - F(\bbx_{T}) } \leq \left(1-\frac{1}{T}\right)^T ({F(\bbx^*) -F(\bbx_{0})}  )
+ \sum_{t=0}^{T-1} \frac{2DQ^{1/2}}{(t+9)^{1/3}T}  + \sum_{t=0}^{T-1} \frac{LD^2}{2T^2}.
\end{align}
Note that we can write
\begin{align}\label{jsjsjsjsjha}
\sum_{t=0}^{T-1} \frac{1}{(t+9)^{1/3}} 
&\leq \int_{t=0}^{T-1}\frac{1}{(t+9)^{1/3}}\ dt  \nonumber\\
&=\frac{3}{2} (t+9)^{2/3}\mid_{t=T-1}\ - \ \frac{3}{2} (t+9)^{2/3}\mid_{t=0}\nonumber\\
&\leq \frac{3}{2} (T+8)^{2/3}\nonumber\\
&\leq  \frac{15}{2} T^{2/3}
\end{align}
where the last inequality holds since $(T+8)^{2/3}\leq 5 T^{2/3}$ for any $T\geq 1$.
By simplifying the terms on the right hand side \eqref{proof:final_result_700} and using the inequality in \eqref{jsjsjsjsjha} we can write 
\begin{align}\label{proof:final_result_800}
& \E{F(\bbx^*) - F(\bbx_{T}) }
  \leq \frac{1}{e} ({F(\bbx^*) -F(\bbx_{0})}  )
+ \ \frac{15DQ^{1/2}}{T^{1/3}}  + \frac{LD^2}{2T}.
\end{align}
Here, we use the fact that $F(\bbx_{0}) \geq0$, and hence the expression in \eqref{proof:final_result_800} can be simplified to  
\begin{equation}\label{proof:final_result_900}
 \E{F(\bbx_{T}) }\geq (1- 1/e) F(\bbx^*)  - \frac{15DQ^{1/2}}{T^{1/3}}-  \frac{LD^2}{2T},
\end{equation}
and the claim in \eqref{eq:claim_for_sto_greedy} follows. 

\section{Proof of Theorem \ref{thm:optimal_bound_greedy_weak}}\label{proof:thm:optimal_bound_greedy_weak}

Following the steps of the proof of Theorem \ref{thm:optimal_bound_greedy} we can derive the inequality
\begin{align}\label{proof:final_result_weak_300}
 F(\bbx_{t+1}) & \geq F(\bbx_{t}) + \frac{1}{T} \langle \bbx^*, \nabla F(\bbx_t) \rangle 
 +  \frac{1}{T} \langle \bbv_{t} - \bbx^*, \nabla F(\bbx_t) - \bbd_t \rangle - \frac{LD^2}{2T^2}.
\end{align}
Using the definition of weak DR-submodularity and monotonicity of $F$ we can wrtie $  \langle \bbx^*, \nabla F(\bbx_t) \rangle \geq \gamma(F(\bbx^*) - F(\bbx_t))$. Further, based on Young's inequality the inner product $\langle \bbv_{t} - \bbx^*, \nabla F(\bbx_t) - \bbd_t \rangle$ is lower bounded by $-  (\beta_t/2)||\bbv_{t} - \bbx^*||^2 - (1/2\beta_t){|| \nabla F(\bbx_t) - \bbd_t ||^2} $ for any $\beta_t>0$. Applying these substitutions into \eqref{proof:final_result_weak_300} leads to 
\begin{align}\label{proof:final_result_weak_400}
 &F(\bbx_{t+1}) 
\geq F(\bbx_{t}) + \frac{\gamma}{ T} (F(\bbx^*) - F(\bbx_{t})) - \frac{LD^2}{2T^2}
-  \frac{1}{2T}\left(\beta_t||\bbv_{t} - \bbx^*||^2 + \frac{|| \nabla F(\bbx_t) - \bbd_t ||^2}{\beta_t}\right).
\end{align}
Substitute $||\bbv_{t} - \bbx^*||^2$ by its upper bound $4D^2$ and compute the expected value of \eqref{proof:final_result_weak_400} to write 
\begin{align}\label{proof:final_result_weak_500}
& \E{F(\bbx_{t+1}) } \geq  \E{F(\bbx_{t})} + \frac{\gamma}{ T}\E{F(\bbx^*) - F(\bbx_{t}))} 
-  \frac{1}{2T} \left[ 4\beta_t D^2 + \frac{\E{|| \nabla F(\bbx_t) - \bbd_t ||^2}}{\beta_t}\right] - \frac{LD^2}{2T^2}.
\end{align}
Substitute $\E{|| \nabla F(\bbx_t) - \bbd_t ||^2}$ by its upper bound ${Q}/({(t+9)^{2/3}})$ according to the result in \eqref{eq:grad_error_bound_2}. Further, set $\beta_t= (Q^{1/2})/(2D(t+9)^{1/3})$ and regroup the resulted expression to obtain 
\begin{align}\label{proof:final_result_weak_600}
 \E{F(\bbx^*) - F(\bbx_{t+1}) }& \leq \left(1-\frac{\gamma}{ T}\right) \E{F(\bbx^*) -F(\bbx_{t})} 
+  \frac{2DQ^{1/2}}{(t+9)^{1/3}T}+\frac{LD^2}{2T^2}.
\end{align}
By applying the inequality in \eqref{proof:final_result_weak_600} recursively for $t=0,\dots,T-1$ we obtain 
\begin{align}\label{proof:final_result_weak_700}
& \E{F(\bbx^*) - F(\bbx_{T}) } \leq \left(1-\frac{\gamma}{T}\right)^T ({F(\bbx^*) -F(\bbx_{0})}  )
+ \sum_{t=0}^{T-1} \frac{2DQ^{1/2}}{(t+9)^{1/3}T}+ \sum_{t=0}^{T-1} \frac{LD^2}{2T^2}.
\end{align}
Simplify the terms on the right hand side \eqref{proof:final_result_weak_700} and use \eqref{jsjsjsjsjha} to obtain 
\begin{align}\label{proof:final_result_weak_800}
& \E{F(\bbx^*) - F(\bbx_{T}) }
  \leq e^{-\gamma} ({F(\bbx^*) -F(\bbx_{0})}  )
+  \frac{15DQ^{1/2}}{T^{1/3}}+ \frac{LD^2}{2T}.
\end{align}
Here, we use the fact that $F(\bbx_{0}) \geq0$, and hence the expression in \eqref{proof:final_result_weak_800} can be simplified to  
\begin{equation}\label{proof:final_result_weak-900}
 \E{F(\bbx_{T}) }\geq (1- e^{-\gamma})F(\bbx^*)  - \frac{15DQ^{1/2}}{T^{1/3}}-  \frac{LD^2}{2T},
\end{equation}
and the claim in \eqref{eq:claim_for_sto_greedy_weak} follows.

\section{Proof of Theorem \ref{thm:non_monotone_bound}}\label{proof:thm:non_monotone_bound}
Using the update of the NMSCG method we can write 
\begin{align}\label{proof_firs_eq}
\bbx_{i,t+1}
&= \bbx_{i,t} +\frac{1}{T} \bbv_{i,t} \nonumber\\
&\leq \bbx_{i,t} +\frac{1}{T} (\bar{\bbu}_i - \bbx_{i,t}) \nonumber\\
&\leq \left(1-\frac{1}{T}\right)\bbx_{i,t} +\frac{1}{T} \bar{\bbu}_i \nonumber\\
&= \left(1-\frac{1}{T}\right)^t\bbx_{i,0} +\frac{1}{T} \bar{\bbu}_i \sum_{j=0}^t \left(1-\frac{1}{T}\right)^j\nonumber\\
&= \bar{\bbu}_i\left(1-\left(1-\frac{1}{T}\right)^{t+1}\right)
\end{align}
where the first inequality follows by the condition $\bbv_{i,t} \leq \bar{\bbu_i}-\bbx_{i,t} $. The result in \eqref{proof_firs_eq} implies that $\bbx_{t} \leq \bar{\bbu} (1-(1-1/T)^{t})$. Therefore, according to Lemma 3 in \citep{DBLP:conf/nips/BianL0B17} which is a generalized version of Lemma 7 in \citep{chekuri2015multiplicative} it follows that
\begin{equation}\label{most_imp}
F(\bbx_t\vee\bbx^*)\geq (1-(1/T))^{t}F(\bbx^*).
\end{equation}
Using this result and the Taylor's expansion of the objective function $F$ we can write
\begin{align}\label{yechi}
F(\bbx_{t+1})-F(\bbx_t)
& \geq \langle \nabla F(\bbx_t), \bbx_{t+1}-\bbx_t\rangle -\frac{L}{2}\|\bbx_{t+1}-\bbx_t\|^2\nonumber\\
& = \frac{1}{T} \langle \nabla F(\bbx_t), \bbv_t\rangle -\frac{L}{2T^2}\|\bbv_t\|^2,
\end{align}
where the equality follows by the update of NMSCG. Add and subtract $\langle \bbd_t, \bbv_t\rangle$ to the right hand side of \eqref{yechi} to obtain
\begin{align}\label{kashkak}
F(\bbx_{t+1})-F(\bbx_t) &\geq \frac{1}{T} \langle \bbd_t, \bbv_t\rangle+\frac{1}{T} \langle \nabla F(\bbx_t)-\bbd_t, \bbv_t\rangle -\frac{L}{2T^2}\|\bbv_t\|^2 \nonumber\\
& \geq \frac{1}{T} \langle \bbd_t, \bbx_t\vee\bbx^*-\bbx_t\rangle+\frac{1}{T} \langle \nabla F(\bbx_t)-\bbd_t, \bbv_t\rangle -\frac{L}{2T^2}\|\bbv_t\|^2,
\end{align}
where the second inequality is valid since $  \{ \bbd_t^T\bbv_t \} \geq \{ \bbd_t^T\bby \}$ for all $\bby\leq \bar{\bbu}-\bbx_t$ and we know that $\bbx_t\vee\bbx^*-\bbx_t\leq \bar{\bbu}-\bbx_t$. Now add and subtract $(1/T) \langle\nabla F(\bbx_t), \bbx_t\vee\bbx^*-\bbx_t\rangle$ to the right hand side of \eqref{kashkak} to obtain
\begin{align}
&F(\bbx_{t+1})-F(\bbx_t) \nonumber\\
& \geq \frac{1}{T} \langle\nabla F(\bbx_t), \bbx_t\vee\bbx^*-\bbx_t\rangle+\frac{1}{T} \langle \bbd_t-\nabla F(\bbx_t), \bbx_t\vee\bbx^*-\bbx_t\rangle+\frac{1}{T} \langle \nabla F(\bbx_t)-\bbd_t, \bbv_t\rangle -\frac{L}{2T^2}\|\bbv_t\|^2 \nonumber\\
& = \frac{1}{T} \langle\nabla F(\bbx_t), \bbx_t\vee\bbx^*-\bbx_t\rangle+\frac{1}{T} \langle \nabla F(\bbx_t)-\bbd_t, \bbv_t+\bbx_t-\bbx_t\vee\bbx^*\rangle -\frac{L}{2T^2}\|\bbv_t\|^2 \nonumber\\
& \geq \frac{1}{T} \langle\nabla F(\bbx_t), \bbx_t\vee\bbx^*-\bbx_t\rangle-\frac{2D}{T} \| \nabla F(\bbx_t)-\bbd_t\|  -\frac{L}{2T^2}\|\bbv_t\|^2 .
\end{align}
The last inequality holds since the inner product $\langle \nabla F(\bbx_t)-\bbd_t, \bbv_t+\bbx_t-\bbx_t\vee\bbx^*\rangle$ can be upper bounded by $\|\nabla F(\bbx_t)-\bbd_t\| \|\bbv_t+\bbx_t-\bbx_t\vee\bbx^*\|$ using the Cauchy-Schwartz inequality and further we can upper bound the norm $\|\bbv_t+\bbx_t-\bbx_t\vee\bbx^*\|$ by $2D$ since $\|\bbv_t+\bbx_t-\bbx_t\vee\bbx^*\|\leq \|\bbv_t\| +\|\bbx_t-\bbx_t\vee\bbx^*\|$ and both $\bbx_t-\bbx_t\vee\bbx^*$ and $\bbv_t$ belong to the set $\ccalC$. This holds due to the assumption that $\ccalC$ is down-closed.

Now replace $\|\bbv_t\|^2$ by its upper bound $D^2$ and use the concavity of the function $F$ in positive directions to write
\begin{align}
F(\bbx_{t+1})-F(\bbx_t)
& \geq \frac{1}{T} ( F(\bbx_t\vee\bbx^*)-F(\bbx_t))
-\frac{2D}{T} \| \nabla F(\bbx_t)-\bbd_t\|  -\frac{LD^2}{2T^2} .
\end{align}
Now use the expression in \eqref{most_imp} to obtain
\begin{align}\label{yechi222}
F(\bbx_{t+1})-F(\bbx_t)
& \geq \frac{1}{T} \left[ \left(1-\frac{1}{T}\right)^{t}F(\bbx^*)-F(\bbx_t)\right]
-\frac{2D}{T} \| \nabla F(\bbx_t)-\bbd_t\|  -\frac{LD^2}{2T^2}.
\end{align}
Computing the expectation of both sides and replacing $\E{\| \nabla F(\bbx_t)-\bbd_t\|}$ by its upper bound ${\sqrt{Q}}/({(t+9)^{1/3}})$ according to the result in \eqref{eq:grad_error_bound_2} lead to
\begin{align}\label{opokopkm}
\E{F(\bbx_{t+1})}
& \geq \left(1-\frac{1}{T}\right)\E{F(\bbx_t)}+\frac{1}{T} \left(1-\frac{1}{T}\right)^{t}F(\bbx^*)
-\frac{2D\sqrt{Q}}{T(t+9)^{1/3}}   -\frac{LD^2}{2T^2}.
\end{align}
By applying the inequality in \eqref{opokopkm} recursively for $t=0,\dots,T-1$ we obtain 
\begin{align}\label{ksksksksk}
\E{F(\bbx_{T})} 
&\geq \left(1-\frac{1}{T}\right)^TF(\bbx_0)
+\sum_{i=0}^{T-1} \left(\frac{1}{T} \left(1-\frac{1}{T}\right)^{i}F(\bbx^*)
-\frac{2D\sqrt{Q}}{T(i+9)^{1/3}}   -\frac{LD^2}{2T^2}\right) \left(1-\frac{1}{T}\right)^{T-1-i}\nonumber\\
&= \left(1-\frac{1}{T}\right)^TF(\bbx_0)
+\left(1-\frac{1}{T}\right)^{T-1}F(\bbx^*)
-\sum_{i=0}^{T-1} \left(\frac{2D\sqrt{Q}}{T(i+9)^{1/3}}   +\frac{LD^2}{2T^2}\right) \left(1-\frac{1}{T}\right)^{T-1-i}\nonumber\\
&\geq \left(1-\frac{1}{T}\right)^TF(\bbx_0)
+\left(1-\frac{1}{T}\right)^{T-1}F(\bbx^*)
-\sum_{i=0}^{T-1} \left(\frac{2D\sqrt{Q}}{T(i+9)^{1/3}}   +\frac{LD^2}{2T^2}\right),
\end{align}
where the second inequality holds since $(1-1/T)$ is strictly smaller than $1$. Replacing the sum $\sum_{i=0}^{T-1} \frac{1}{(i+9)^{1/3}} $ in \eqref{ksksksksk} by its upper bound in \eqref{jsjsjsjsjha} leads to
\begin{align}
\E{F(\bbx_{T}) }\geq \left(1-\frac{1}{T}\right)^TF(\bbx_0)+\left(1-\frac{1}{T}\right)^{T-1}F(\bbx^*)
-\frac{15D\sqrt{Q}}{T^{1/3}}   -\frac{LD^2}{2T}
\end{align}
Use the fact that $F(\bbx_{0}) \geq0$ and the inequality $\left(1-\frac{1}{T}\right)^{T-1}\geq e^{-1}$ to obtain
\begin{align}\label{risk_factor}
\E{F(\bbx_{T})} \geq e^{-1}F(\bbx^*)
-\frac{15D\sqrt{Q}}{T^{1/3}}   -\frac{LD^2}{2T},
\end{align}
and the claim in \eqref{claim:non_monotone_bound} follows.

\section{Proof of Lemma \ref{lemma:lip_constant}}\label{proof:lemma:lip_constant}
Based on the mean value theorem, we can write
\begin{align}
\nabla F(\bbx_t+\frac{1}{T}\bbv_t) -\nabla F(\bbx_T)  = \frac{1}{T} \bbH(\tbx_t)\bbv_t,
\end{align}
where $\tbx_t$ is a convex combination of $\bbx_t$ and $\bbx_t+\frac{1}{T}\bbv_t$ and $\bbH(\tbx_t):=\nabla^2 F(\tbx_t)$. This expression shows that the difference between the coordinates of the vectors $\nabla F(\bbx_t+\frac{1}{T}\bbv_t) $ and $\nabla F(\bbx_t)$ can be written as
\begin{align}
\nabla_{j} F(\bbx_t+\frac{1}{T}\bbv_t) -\nabla_{j} F(\bbx_t)  = \frac{1}{T} \sum_{i=1}^n H_{j,i}(\tbx_t)v_{i,t},
\end{align}
where $v_{i,t}$ is the $i$-th element of the vector $\bbv_t$ and $H_{j,i}$ denotes the component in the $j$-th row and $i$-th column of the matrix $\bbH$. Hence, the norm of the difference $|\nabla_{j} F(\bbx_t+\frac{1}{T}\bbv_t) -\nabla_{j} F(\bbx_t) |$ is bounded above by
\begin{align}
|\nabla_{j} F(\bbx_t+\frac{1}{T}\bbv_t) -\nabla_{j} F(\bbx_t) | \leq \frac{1}{T} \left|\sum_{i=1}^n H_{j,i}(\tbx_t)v_{i,t}\right|.
\end{align}
Note here that the elements of the matrix $\bbH(\tbx_t)$ are less than the maximum marginal value (i.e. $\max_{i,j} |H_{i,j}(\tbx_t)| \leq \max_{i \in \{1, \cdots, n\}} f(i) \triangleq m_f$). We thus get
\begin{align}
|\nabla_{j} F(\bbx_t+\frac{1}{T}\bbv_t) -\nabla_{j} F(\bbx_t) | \leq \frac{m_f}{T} \sum_{i=1}^n | v_{i,t}|.
\end{align}
Note that at each round $t$ of the algorithm, we have to pick a vector $\bbv_t \in \mathcal{C}$ s.t. the inner product $\langle \bbv_t, \bbd_t\rangle$ is maximized.  Hence, without loss of generality we can assume that the vector $\bbv_t$ is one of the extreme points of $\mathcal{C}$, i.e. it is of the form $1_{I}$ for some $I \in \mathcal{I}$ (note that we can easily force integer vectors).  Therefore by noticing that $\bbv_t$ is an integer vector with at most $r$ ones, we have
\begin{align}
|\nabla_{j} F(\bbx_t+\frac{1}{T}\bbv_t) -\nabla_{j} F(\bbx_t) | \leq \frac{m_f\sqrt{r}}{T} \sqrt{\sum_{i=1}^n | v_{i,t}|^2},
\end{align}
which yields the claim in \eqref{claim:lip_constant}.

\section{Proof of Theorem \ref{thm:multi_linear_extenstion_thm}}\label{proof:thm:multi_linear_extenstion_thm}

According to the Taylor's expansion of the function $F$ near the point $\bbx_t$ we can write 
\begin{align}\label{proof:final_result__multi_lin_100}
F(\bbx_{t+1}) &= F(\bbx_{t}) + \langle \nabla F(\bbx_t), \bbx_{t+1}-\bbx_t \rangle 
 + \frac{1}{2}\langle \bbx_{t+1}-\bbx_t, \bbH(\tbx_t) (\bbx_{t+1}-\bbx_t)\rangle \nonumber\\
&= F(\bbx_{t}) + \frac{1}{T}\langle \nabla F(\bbx_t), \bbv_t \rangle 
+ \frac{1}{2T^2}\langle \bbv_t, \bbH(\tbx_t) \bbv_t\rangle,
 \end{align}
where $\tbx_t$ is a convex combination of $\bbx_t$ and $\bbx_t+\frac{1}{T}\bbv_t$ and $\bbH(\tbx_t):=\nabla^2 F(\tbx_t)$. Note that based on the inequality $\max_{i,j} |H_{i,j}(\tbx_t)| \leq \max_{i \in \{1, \cdots, n\}} f(i) \triangleq m_f$, we can lower bound $H_{ij}$ by $-m_f$. Therefore, 
\begin{align}\label{proof:final_result__multi_lin_200}
\langle \bbv_t, \bbH(\tbx_t) \bbv_t\rangle 
= \sum_{j=1}^n\sum_{i=1}^n v_{i,t} v_{j,t} H_{ij}(\tbx_t)\geq -m_f\sum_{j=1}^n\sum_{i=1}^n v_{i,t} v_{j,t} =-m_f\left(\sum_{i=1}^n v_{i,t}\right)^2=-m_f r\|\bbv_t\|^2,
\end{align}
where the last inequality is because $\bbv_t$ is a vector with $r$ ones and $n-r$ zeros (see the explanation in the proof of Lemma~\ref{lemma:lip_constant}).
Replace the expression $\langle \bbv_t, \bbH(\tbx_t) \bbv_t\rangle $ in \eqref{proof:final_result__multi_lin_100} by its lower bound in \eqref{proof:final_result__multi_lin_200} to obtain
\begin{align}\label{proof:final_result__multi_lin_300}
F(\bbx_{t+1}) \geq F(\bbx_{t}) + \frac{1}{T}\langle \nabla F(\bbx_t), \bbv_t \rangle 
- \frac{m_f r}{2T^2}\|\bbv_t\|^2.
\end{align}
In the following lemma we derive a variant of the result in Lemma \ref{lemma:bound_on_grad_approx_sublinear} for the multilinear extension setting.

\begin{lemma}\label{lemma:sub_grad_error}
Consider \alg (SCG)  outlined in Algorithm~\ref{algo_SCGGA}, and recall the definitions of the function $F$ in  \eqref{eq:multilinear_program}, the rank $r$, and $m_f \triangleq \max_{i \in \{1, \cdots, n\}} f(i)$.  If we set $\rho_t=\frac{4}{(t+8)^{2/3}}$, then for $t=0,\dots,T$ and for $j=1,\dots,n$ it holds
\begin{align}\label{ali_mansoor}
\E{ \|\nabla F(\bbx_{t}) - \bbd_{t}\|^2}&\leq \frac{Q}{(t+9)^{2/3}},
\end{align}
 where  $Q:=\max \{ 4\|\nabla F(\bbx_{0}) - \bbd_{0}\|^2  , 16\sigma^2+3m_f^2 r D^2 \}$. 
\end{lemma}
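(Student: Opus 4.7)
The plan is to mimic the proof of Lemma~\ref{lemma:bound_on_grad_approx_sublinear} almost verbatim, the sole substantive change being that the global smoothness bound on $\|\nabla F(\bbx_t)-\nabla F(\bbx_{t-1})\|^2$ (previously supplied by Assumption~\ref{ass:smoothness}, which is unavailable here) is now furnished by the multilinear-setting analogue, Lemma~\ref{lemma:lip_constant}.

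Writing $\bbe_t:=\nabla F(\bbx_t)-\bbd_t$, expanding via $\bbd_t=(1-\rho_t)\bbd_{t-1}+\rho_t\nabla\tilde F(\bbx_t,\bbz_t)$, adding and subtracting $(1-\rho_t)\nabla F(\bbx_{t-1})$, squaring, and taking the conditional expectation reproduces identity \eqref{proof:bound_on_grad_300000} verbatim; the cross terms involving $\nabla\tilde F-\nabla F$ vanish by unbiasedness $\E{\nabla\tilde F(\bbx_t,\bbz_t)\mid \ccalF_t}=\nabla F(\bbx_t)$. The variance Assumption~\ref{ass:bounded_variance}---with the bound $\sigma^2$ provided here by \eqref{var_bound_multi}---dispatches the leading noise term as $\rho_t^2\sigma^2$.

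The key new ingredient enters at the gradient-difference term. By Lemma~\ref{lemma:lip_constant}, $|\nabla_j F(\bbx_t)-\nabla_j F(\bbx_{t-1})|\le m_f\sqrt{r}\,\|\bbx_t-\bbx_{t-1}\|$ holds coordinate-wise, and since $\bbx_t-\bbx_{t-1}=\bbv_{t-1}/T$ with $\|\bbv_{t-1}\|\le D$ from Assumption~\ref{ass:bounded_set}, one obtains $\|\nabla F(\bbx_t)-\nabla F(\bbx_{t-1})\|^2\le C\, m_f^2 r D^2/T^2$ for an absolute constant $C$. Applying Young's inequality with $\beta_t=\rho_t/2$ to the remaining cross term $2(1-\rho_t)^2\langle \nabla F(\bbx_t)-\nabla F(\bbx_{t-1}),\bbe_{t-1}\rangle$, and invoking the elementary estimates $(1-\rho_t)(1+2/\rho_t)\le 2/\rho_t$ and $(1-\rho_t)(1+\rho_t/2)\le 1-\rho_t/2$, then taking unconditional expectation, yields the recursion
$$\phi_t\le\Bigl(1-\tfrac{\rho_t}{2}\Bigr)\phi_{t-1}+\rho_t^2\sigma^2+\frac{2Cm_f^2 rD^2}{\rho_t T^2},\qquad \phi_t:=\E{\|\bbe_t\|^2}.$$

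Plugging in $\rho_t=4/(t+8)^{2/3}$ and using $1/T\le 2/(t+8)$ (valid since $t\le T$ and $8\le T$, so $1/T^2$ can be swapped for $4/(t+8)^{4/3}$ after multiplication by $1/\rho_t$), the recursion collapses to
$$\phi_t\le\Bigl(1-\tfrac{2}{(t+8)^{2/3}}\Bigr)\phi_{t-1}+\frac{16\sigma^2+3m_f^2 rD^2}{(t+8)^{4/3}}.$$
Finally, invoking Lemma~\ref{lemma:induction_result} with $\alpha=2/3$, $c=2$, $t_0=8$, and $b=16\sigma^2+3m_f^2 rD^2$ delivers the claim \eqref{ali_mansoor} with the stated $Q$. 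The main delicate point is the constant bookkeeping: ensuring the numerical coefficient on the $m_f^2 rD^2$ term lands at exactly $3$ after the $(1-\rho_t)$-absorptions and the $1/T^2\mapsto 4/(t+8)^{4/3}$ conversion, and--separately--recognizing that the variance bound used here derives from \eqref{var_bound_multi} rather than from a generic Assumption~\ref{ass:bounded_variance} hypothesis on $\nabla\tilde F$.
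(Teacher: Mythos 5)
Your proposal follows essentially the same route as the paper's own (very terse) proof: rerun the argument of Lemma~\ref{lemma:bound_on_grad_approx_sublinear} with the smoothness constant supplied by Lemma~\ref{lemma:lip_constant} in place of $L$, the variance supplied by \eqref{var_bound_multi}, Young's inequality with $\beta_t=\rho_t/2$, and a final appeal to Lemma~\ref{lemma:induction_result}. The only caveat is the one you yourself flag: writing $\|\nabla F(\bbx_t)-\nabla F(\bbx_{t-1})\|^2\le C\,m_f^2 rD^2/T^2$ with an \emph{absolute} constant $C$ is not literally what summing the coordinate-wise bound of Lemma~\ref{lemma:lip_constant} over $j=1,\dots,n$ gives (that yields $C=n$), but this looseness in the constant bookkeeping is present in the paper's stated $Q$ as well, so your argument matches the paper's in both structure and level of rigor.
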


\begin{proof}
The proof is similar to the proof of Lemma~\ref{lemma:bound_on_grad_approx_sublinear}. The main difference is to write the analysis for the $j$-th coordinate and replace and $L$ by  ${m_f\sqrt{r}}$ as shown in Lemma \ref{lemma:lip_constant}. Then using the proof techniques in Lemma \ref{lemma:bound_on_grad_approx_sublinear} the claim in Lemma \ref{lemma:sub_grad_error} follows.
\end{proof}

The rest of the proof is identical to the proof of Theorem~\ref{thm:optimal_bound_greedy}, by following the steps from \eqref{proof:final_result_100} to \eqref{proof:final_result_900} and considering the bound in \eqref{ali_mansoor} we obtain
\begin{equation}\label{proof:final_result_900_v2}
 \E{F(\bbx_{T}) }\geq (1- 1/e) F(\bbx^*)  - \frac{2DQ^{1/2}}{T^{1/3}}-  \frac{m_f rD^2}{2T},
\end{equation}
where $Q:=\max \{ 4\|\nabla F(\bbx_{0}) - \bbd_{0}\|^2  , 16\sigma^2+3rm_f^2 D^2 \}$. Therefore, the claim in Theorem \ref{thm:multi_linear_extenstion_thm} follows by replacing $\sigma^2$ by $n\max_{j  \in [n]} \mathbb{E}[ \tilde{f}(\{j\}, \mathbf{z})^2 ]$ as shown in \eqref{var_bound_multi}.

\section{Proof of Theorem \ref{thm:multi_linear_extenstion_thm_non_monotone_case}}\label{proof:thm:multi_linear_extenstion_thm_non_monotone_case}

Following the steps of the proof of Theorem \ref{thm:multi_linear_extenstion_thm} from \eqref{proof:final_result__multi_lin_100} to \eqref{proof:final_result__multi_lin_300} we obtain
\begin{align}\label{proof:final_result__multi_lin_nm_100}
F(\bbx_{t+1}) \geq F(\bbx_{t}) + \frac{1}{T}\langle \nabla F(\bbx_t), \bbv_t \rangle 
- \frac{m_f r}{2T^2}\|\bbv_t\|^2.
\end{align}
Then, by following the steps from \eqref{most_imp} to \eqref{yechi222} we obtain
\begin{align}\label{yechi22000}
F(\bbx_{t+1})-F(\bbx_t)
& \geq \frac{1}{T} \left[ \left(1-\frac{1}{T}\right)^{t}F(\bbx^*)-F(\bbx_t)\right]
-\frac{2D}{T} \| \nabla F(\bbx_t)-\bbd_t\|  -\frac{m_f rD^2}{2T^2}.
\end{align}
Note that the result in Lemma \ref{lemma:sub_grad_error} holds for both monotone and non-monotone functions $F$. Therefore, by computing the expected value of both sides of \eqref{yechi222}  we obtain
Then, by following the steps from \eqref{most_imp} to \eqref{yechi222} we obtain
\begin{align}\label{yechi2200iei}
\E{F(\bbx_{t+1})}-\E{F(\bbx_t)}
& \geq \frac{1}{T} \left[ \left(1-\frac{1}{T}\right)^{t}F(\bbx^*)-\E{F(\bbx_t)}\right]
-\frac{2D}{T} \frac{\sqrt{Q}}{(t+9)^{1/3}}  -\frac{m_f rD^2}{2T^2}.
\end{align}
Now by following the steps from \eqref{opokopkm} to \eqref{risk_factor} the claim in \eqref{eq:claim_for_sto_greedy_multi_linear_non_monotone_case} follows. 
\section{Proof of Theorem \ref{thm:optimal_bound_curvature_greedy}}\label{proof:thm:optimal_bound_curvature_greedy}

According to the result in Lemma 3 of \citep{hassani2017gradient}, it can be shown that  when the function $F$ has a curvature $c$ then
\begin{equation}
\max_{\bbv}\bbv^T \nabla F(\bbx_t) \geq  F(\bbx^*) - c F(\bbx_t),
\end{equation}
Using this inequality we can write 
\begin{align}
\bbv_t^T\bbd_t
&= \max_{\bbv}\{\bbv^T \bbd_t\}\nonumber\\
& = \max_{\bbv} \{\bbv^T \nabla F(\bbx_t) +\bbv^T(\nabla F(\bbx_t)-\bbd_t)\}\nonumber\\
&\geq \max_{\bbv} \{\bbv^T \nabla F(\bbx_t) \}-\max_{\bbv} \{\bbv^T(\nabla F(\bbx_t)-\bbd_t)\}\nonumber\\
&\geq F(\bbx^*) - c F(\bbx_t)  - D\|\nabla F(\bbx_t)-\bbd_t\|.
\end{align}
Therefore, using the result in \eqref{proof:final_result__multi_lin_300} we can write
\begin{align}
F(\bbx_{t+1}) & \geq F(\bbx_{t}) + \frac{1}{T} \langle \bbv_{t}, \nabla F(\bbx_t) \rangle -  \frac{m_fr}{2T^2} || \bbv_{t} ||^2\nonumber \\
&=  F(\bbx_{t}) + \frac{1}{T} \langle \bbv_{t}, \bbd_t \rangle +  \frac{1}{T} \langle \bbv_{t}, \nabla F(\bbx_t) - \bbd_t \rangle -  \frac{L}{2T^2} || \bbv_{t} ||^2  \nonumber\\
&\geq F(\bbx_{t}) + \frac{1}{T}(F(\bbx^*) - c F(\bbx_t)  ) -  \frac{2D}{T} \|\nabla F(\bbx_t) - \bbd_t \| - \frac{m_frD^2}{2T^2}  .
\end{align}
Compute the expected value of both sides and use the result of Lemma \ref{lemma:sub_grad_error} to obtain 
\begin{align}
\E{F(\bbx_{t+1})}  \geq \E{F(\bbx_{t})} + \frac{1}{T}(F(\bbx^*) - c \E{F(\bbx_t)}  ) -  \frac{2D\sqrt{Q}}{T(t+9)^{1/3}}  - \frac{m_frD^2}{2T^2} .
\end{align}
Subtract $F^*:=F(\bbx^*)$ from both sides and regroup the terms to obtain
\begin{align}\label{pppp}
\E{F(\bbx_{t+1})} -F^* \geq (1-\frac{c}{T})(\E{F(\bbx_{t})} -F^*)+ \frac{1-c}{T}F^*   -  \frac{2D\sqrt{Q}}{T(t+9)^{1/3}}  - \frac{m_frD^2}{2T^2}  .
\end{align}
Now applying the expression in \eqref{pppp} for $t=0,\dots,T-1$ recursively yields
\begin{align}
\E{F(\bbx_{T})} -F^* 
&\geq (1-\frac{c}{T})^T(F(\bbx_{0}) -F^*)+  \sum_{i=0}^{T-1} \left[\frac{1-c}{T}F^*   -  \frac{2D\sqrt{Q}}{T(i+9)^{1/3}}  - \frac{m_frD^2}{2T^2}  \right]  (1-\frac{c}{T})^{T-1-i}\nonumber\\
&\geq (1-\frac{c}{T})^T(F(\bbx_{0}) -F^*)+ \frac{1-c}{T}F^* \sum_{i=0}^{T-1}(1-\frac{c}{T})^{T-1-i}      - \sum_{i=0}^{T-1} \left[     \frac{2D\sqrt{Q}}{T(i+9)^{1/3}}  +\frac{m_frD^2}{2T^2}  \right] \nonumber\\
&\geq (1-\frac{c}{T})^T(F(\bbx_{0}) -F^*)+ \frac{1-c}{c} \left({1-(1-\frac{c}{T})^T}\right) F^*      -     \frac{15D\sqrt{Q}}{T^{1/3}}  - \frac{m_frD^2 }{2T} \nonumber\\
&\geq -(1-\frac{c}{T})^TF^*+ \frac{1-c}{c} \left({1-(1-\frac{c}{T})^T}\right) F^*      -     \frac{15D\sqrt{Q}}{T^{1/3}}  - \frac{m_frD^2 }{2T} \nonumber\\
&= \left(\frac{1-c}{c} -\frac{1}{c} (1-\frac{c}{T})^T\right) F^*      -     \frac{15D\sqrt{Q}}{T^{1/3}}  - \frac{m_frD^2 }{2T},
\end{align}
where in the second inequality we use the fact that $(1-c/T)\leq 1$, in the third inequality we use the result in \eqref{jsjsjsjsjha}, and in the fourth inequality we use the assumption that $F(\bbx_0)\geq 0$.
Therefore, by regrouping the terms we obtain that
\begin{align}
\E{F(\bbx_{T})}  
&\geq \frac{1}{c} \left( 1- (1-\frac{c}{T})^T\right) F^*-  \frac{15D\sqrt{Q}}{T^{1/3}}  - \frac{m_frD^2 }{2T} \nonumber\\
&\geq \frac{1}{c} \left( 1- e^{-c}\right) F^*-  \frac{15D\sqrt{Q}}{T^{1/3}}  - \frac{m_frD^2 }{2T} ,
\end{align}
and the claim in \eqref{eq:claim_for_sto_greedy_curvature} follows.

%
%

\bibliography{bibliography,references}
\bibliographystyle{abbrvnat}

\end{document}